%
%

%
%

\documentclass[11pt,reqno]{amsart}

\setlength{\oddsidemargin}{0.125in}
\setlength{\evensidemargin}{0.125in}
\setlength{\textwidth}{6.375in}
\setlength{\textheight}{8.5in}
\topskip 0in
\topmargin 0.375in
\footskip 0.25in










\setcounter{tocdepth}{4}



\newtheorem{thm}{Theorem}[section]
\newtheorem*{thm*}{Theorem}
\newtheorem{lem}[thm]{Lemma}
\newtheorem*{lem*}{Lemma}

\newtheorem{cor}[thm]{Corollary}

\newtheorem{prop}[thm]{Proposition}

\theoremstyle{definition}

\newtheorem{assump}[thm]{Assumption}
\renewcommand{\thecase}{}
\newtheorem*{case*}{Case}

\newtheorem{defn}[thm]{Definition}
\newtheorem*{defn*}{Definition}

\newtheorem*{exmp*}{Example}

\newtheorem{rmk}[thm]{Remark}
\newtheorem*{rmk*}{Remark}
\newtheorem{step}{Step}\renewcommand{\thestep}{}

\theoremstyle{remark}


\makeatletter
\def\alphenumi{
  \def\theenumi{\alph{enumi}}
  \def\p@enumi{\theenumi}
  \def\labelenumi{(\@alph\c@enumi)}}
\makeatother




\makeatletter
\def\thecase{\@arabic\c@case}
\makeatother




\makeatletter
\def\thestep{\@arabic\c@step}
\makeatother


%
%

\newcount\hh
\newcount\mm
\mm=\time
\hh=\time
\divide\hh by 60
\divide\mm by 60
\multiply\mm by 60
\mm=-\mm
\advance\mm by \time
\def\hhmm{\number\hh:\ifnum\mm<10{}0\fi\number\mm}


%

\setlength{\marginparwidth}{1.0in}
\let\oldmarginpar\marginpar
\renewcommand\marginpar[1]{\-\oldmarginpar[\raggedleft\footnotesize #1]%
{\raggedright\footnotesize #1}}






\newcommand\NN{\mathbb{N}}

\newcommand\RR{\mathbb{R}}


\newcommand\cB{{\mathcal{B}}}

\newcommand\cD{{\mathcal{D}}}

\newcommand\cF{{\mathcal{F}}}

\newcommand\cL{{\mathcal{L}}}

\newcommand\cN{{\mathcal{N}}}

\newcommand\cR{{\mathcal{R}}}

\newcommand\cW{{\mathcal{W}}}


\newcommand\fa{{\mathfrak{a}}}

\newcommand\fb{{\mathfrak{b}}}

\newcommand\fc{{\mathfrak{c}}}

\newcommand\fe{{\mathfrak{e}}}

\newcommand\ff{{\mathfrak{f}}}

\newcommand\fl{{\mathfrak{l}}}






\newcommand{\pa}{\partial}


\newcommand\eps{\varepsilon}








\newcommand\restrictedto{\upharpoonright}

%
%

\numberwithin{equation}{section}

\newcommand{\Kim}{\operatorname{Kim}}
\newcommand{\WF}{\operatorname{WF}}
\newcommand{\Id}{\operatorname{Id}}

\usepackage{amssymb}
\usepackage{hyperref}
\usepackage{mathrsfs}
\usepackage{slashed}
\usepackage[usenames]{color}
\usepackage{url}
\usepackage{verbatim}
\usepackage{tikz}
\usepackage{times}

\hypersetup{pdftitle={Boundary estimates for degenerate equations}}

\hypersetup{pdfauthor={Charles L. Epstein and Camelia A. Pop}}

%
%

\begin{document}

\title[Boundary estimates for degenerate equations]
{Boundary estimates for a degenerate parabolic equation with partial Dirichlet boundary conditions}

\author[C.L.~Epstein]{Charles L.~Epstein}
\address[CLE]{Department of Mathematics, University of Pennsylvania, 209 S. 33rd Street, Philadelphia, PA, 19104-6395}
\email{cle@math.upenn.edu}
\author[C.A.~Pop]{Camelia A.~Pop}
\address[CP]{School of Mathematics, University of Minnesota, 206 Church St. SE, Minneapolis, MN 55455}
\email{capop@umn.edu}

\date{\today{ }\hhmm}

\begin{abstract}
We study the boundary regularity properties and derive pointwise a priori supremum estimates of weak solutions and their derivatives in terms of suitable weighted $L^2$-norms for a class of degenerate parabolic equations that satisfy homogeneous Dirichlet boundary conditions on certain portions of the boundary. Such equations arise in population genetics in the study of models for the evolution of gene frequencies. Among the applications of our results is the description of the structure of the transition probabilities and of the hitting distributions of the underlying gene frequencies process.
\end{abstract}

%

\subjclass[2010]{Primary 35J70; secondary 60J60}
\keywords{Degenerate elliptic operators, a priori supremum estimates, a priori Sobolev estimates, boundary regularity}
\dedicatory{This paper is dedicated to the memory of Gennadi M.~Henkin (1942-2016).}
\thanks{CLE research partially supported by the NSF under grant DMS-1507396,
  and by the ARO under grant W911NF-12-1-0552} 
\thanks{CP gratefully acknowledges the support and hospitality provided by the
  IMA during the academic year 2015-2016.}

\maketitle

\tableofcontents

\section{Introduction}
\label{sec:Introduction}

We analyze the boundary behavior of solutions to a class of degenerate
parabolic equations defined on compact manifolds with corners
\cite{Melrose_1991}. The type of weak solutions that we consider satisfy
homogeneous Dirichlet boundary conditions on suitable portions of the boundary
of the manifold. Our results include the proof of a priori pointwise supremum
estimates of weak solutions in terms of the weighted $L^2$-norm of the solution
and of boundary Harnack principles. In \cite {Epstein_Pop_2015} we apply the
results of this article to give a detailed description of the structure of the
fundamental solution for the heat equation and of the caloric measure. The
family of operators we study are called \emph{generalized Kimura operators},
and they were introduced in the work of C.~L.~Epstein and R.~Mazzeo
\cite{Epstein_Mazzeo_annmathstudies}. A local description of a generalized
Kimura operators defined on a compact manifold $P$ with corners can be given in
an adapted system of coordinates by:
\begin{equation}
\label{eq:Operator}
\begin{aligned}
Lu &= \sum_{i=1}^n\left(x_i\bar a_{ii}(z)u_{x_ix_i} +b_i(z) u_{x_i}\right) +\sum_{i,j=1}^n x_ix_ja_{ij}(z)u_{x_ix_j}+\sum_{l,k=1}^md_{lk}(z)u_{y_ly_k}\\
&\quad+\sum_{i=1}^n\sum_{l=1}^mx_ic_{il}(z)u_{x_iy_l} + \sum_{l=1}^me_l(z)u_{y_l} +c(z)u,
\end{aligned}
\end{equation}
where we identify a boundary point in $\partial P$ with the origin in $\bar S_{n,m}:=\RR^n_+\times\RR^m$ ($\RR_+:=(0,\infty), n,m\in\NN$), and we denote $z=(x,y)\in S_{n,m}$. 

\subsection{Boundary behavior of local weak solutions}
\label{sec:Local_sup_est}

In this section we state the main results about the boundary regularity of solutions to the parabolic Dirichlet problem for the operator $L$ defined in \eqref{eq:Operator}. These results contain the proof the the pointwise boundary estimates of solutions (and their 
higher-order derivatives) in Theorem \ref{thm:Boundary_reg} and of the boundary Harnack principles in Theorems \ref{thm:Carleson_Hopf_Oleinik} and \ref{thm:Holder_cont}.

Because we study the local regularity of solutions to equations defined by the operator 
$L$, it is sufficient to impose conditions on the coefficients of $L$ only on a neighborhood of the origin in $\bar S_{n,m}$, say $\bar\cB_2$, where for all $r>0$ and $z^0\in\bar S_{n,m}$ we use the notation
\begin{equation*}
\cB_r(z^0) := \{z \in S_{n,m}: |x_i-x^0_i| < r, \hbox{ for all } 1\leq i \leq n, |y_l-y^0_l|<r,\hbox{ for all } 1\leq l\leq m\}.
\end{equation*}
When $z^0$ is the origin in $\bar S_{n,m}$, we denote for brevity $\cB_r(z^0)$ by $\cB_r$. We introduce:

\begin{assump}
\label{assump:Coeff}
The coefficients of the operator $L$ defined in \eqref{eq:Operator} satisfy:
\begin{enumerate}
\item[1.] The functions $\bar a_{ii}(z)$, $a_{ij}(z)$, $b_i(z)$, $c_{il}(z)$, $d_{lk}(z)$, $e_l(z)$, and $c(z)$ are smooth and bounded functions on $\bar \cB_2$, for all $1\leq i, j\leq n$ and $1\leq l, k \leq m$.
\item[2.] The drift coefficients $b_i(z)$ satisfy the \emph{cleanness
    condition}: There is a positive constant, $\beta_0$, such that for all
  $1\leq i\leq n$ we have that either
\begin{equation}
\label{eq:Cleanness}
\begin{aligned}
b_i(z) &= 0,\quad\forall\, z\in \partial \cB_2\cap\{x_i = 0\},\\
\hbox{or }\quad
b_i(z) &\geq \beta_0 >0,\quad\forall\, z\in \partial \cB_2\cap\{x_i = 0\}.
\end{aligned}
\end{equation}
\item[3.] The \emph{strict ellipticity} condition holds: there is a positive constant, $\lambda$, such that for all $z \in \bar\cB_2$, $\xi\in\RR^n$, and $\eta\in\RR^m$, we have that
\begin{equation}
\label{eq:Uniform_ellipticity}
\begin{aligned}
\sum_{i=1}^n \bar a_{ii}(z)\xi_i^2
+ \sum_{i,j=1}^n a_{ij}(z)\xi_i\xi_j
+\sum_{i=1}^n \sum_{l=1}^m c_{il}(z)\xi_i\eta_l 
+\sum_{l,k=1}^m d_{lk}(z)\eta_k\eta_l
\geq \lambda\left(|\xi|^2+|\eta|^2\right).
\end{aligned}
\end{equation}
\end{enumerate}
\end{assump}

From \eqref{eq:Uniform_ellipticity}, we see that the operator $L$ is not
strictly elliptic up to the boundary $\partial S_{n,m}$, because the
coefficients of the second-order derivatives $\partial^2_{x_i}$ are linearly
proportional to the distance to the boundary, and so they converge to $0$ as we
approach the boundary component $\{x_i=0\}$. We also notice that the terms
$x_i\partial^2_{x_i}$ and $b_i(z)\partial_{x_i}$ scale in the same way, and so,
at the boundary $\{x_i=0\},$
the first-order derivative $b_i(z)\partial_{x_i}$ is not of lower order, as
in the case of strictly elliptic operators. The coefficients $\{b_i:1\leq i\leq
n\}$ play a fundamental role in the analysis and this is hinted also in our
description of the behavior of the Wright-Fisher process on boundary components
of the simplex, $\{x_i=0\}\cap\bar\Sigma_n$ with $b_i=0$, where the process is
absorbed instead of being reflected. On such absorbent boundary components, we
impose homogeneous Dirichlet boundary conditions.

Applying \cite[Proposition 2.2.3]{Epstein_Mazzeo_annmathstudies}, we can make a change of the coordinate system so that we can assume without loss of generality that the operator $L$ defined in \eqref{eq:Operator} has the property that
\begin{equation}
\label{eq:Operator_adapted_system}
\bar a_{ii}(z) = 1,\quad\forall\, z\in \bar \cB_2,\quad\forall\, 1\leq i\leq n.
\end{equation}
The coefficients $\{b_i:1\leq i\leq n\}$ play a fundamental role in the
analysis of generalized Kimura operators.  Following \cite[Definition
2.1]{Epstein_Mazzeo_2016}, in coordinates
satisfying~\eqref{eq:Operator_adapted_system}, the coefficient $b_i$ restricted
to the boundary component $\{x_i=0\}\cap \partial\cB_2$ is called a weight of
the operator $L$. Notice that by condition \eqref{eq:Cleanness} in Assumption
\ref{assump:Coeff}, we can assume without loss of generality that there is
$n_0\in\NN$ such that
\begin{equation}
\label{eq:n_0}
b_i\restrictedto_{\{x_i=0\}\cap \partial\cB_2} =0,\quad\forall\, 1\leq i\leq n_0,
\quad\hbox{ and }\quad
b_i\restrictedto_{\{x_i=0\}\cap \partial\cB_2} \geq\beta_0>0,\quad\forall\, n_0+1\leq i\leq n.
\end{equation}
The notion of weak solutions we consider, and whose technical definition we defer to \S \ref{sec:Weak_sol}, satisfies homogeneous Dirichlet boundary conditions corresponding to the portions of the boundary with zero weights (i.e., $1\leq i\leq n_0$). However, we impose no boundary conditions along the portions of the boundary with positive weights (i.e., $n_0+1\leq i\leq n$). 

\subsubsection{Pointwise supremum estimates}
\label{sec:Pointwise_sup_est}

To state the a priori pointwise supremum estimates satisfied by weak solutions, we first need to introduce additional notation.
We use the coefficients $\{b_i(z):1\leq i\leq n\}$ to define the measure:
\begin{equation}
\label{eq:Weight}
d\mu(z) = \prod_{i=1}^{n_0} x_i^{-1}\, dx_i\prod_{j=n_0+1}^n x_j^{b_j(z)-1}\, dx_j\prod_{l=1}^m dy_l,
\end{equation}
where we recall the definition of $n_0$ in \eqref{eq:n_0}.
Given a Borel measurable set, $\Omega\subseteq S_{n,m}$, a measurable function $u:\Omega\rightarrow\RR$ is said to belong to the space of functions $L^2(\Omega;d\mu)$ if the norm
\begin{equation}
\label{eq:L_2_norm_S_n_m}
\|u\|^2_{L^2(\Omega;d\mu)} := \int_{\Omega} |u(z)|^2 \, d\mu(z) <\infty.
\end{equation}
Let $\fa\in\NN^n$ and $\fb\in\NN^m$. We denote by $\fe_i\in\NN^n$ the unit vector in $\RR^n$ with all coordinates 0, except for the $i$-th coordinate, which is equal to 1. We denote by $\ff_l\in\NN^m$ the unit vector in $\RR^m$ with all coordinates 0, except for the $l$-th coordinate, which is equal to 1. For all $\fa=(\fa_1,\ldots,\fa_n)\in\NN^n$, $\fb=(\fb_1,\ldots,\fb_m)\in\NN^m$, and $\fc\in\NN$ we denote
$$
D^{\fa}_xD^{\fb}_yD^{\fc}_t := 
\frac{\partial^{|\fa|}}{\partial x_1^{\fa_1}\ldots\partial x_n^{\fa_n}}
\frac{\partial^{|\fb|}}{\partial y_1^{\fb_1}\ldots\partial y_m^{\fb_m}}
\frac{\partial^{\fc}}{\partial t^{\fc}},
$$
where $|\fa|:=|\fa_1|+\ldots+|\fa_n|$ and $\fb=|\fb_1|+\ldots+|\fb_m|$. 

We can now state the first main result in which we establish \emph{pointwise boundary supremum estimates} of local weak solutions to the parabolic Dirichlet problem defined by the operator $L$. 

\begin{thm}[Boundary regularity]
\label{thm:Boundary_reg}
Assume that the operator $L$ in \eqref{eq:Operator} satisfies \eqref{eq:Operator_adapted_system} and Assumption \ref{assump:Coeff}. Let $T>0$, $R\in (0,1)$, and let $u\in L^2((0,T);L^2(\cB_R;d\mu))$ be a local weak solution to equation 
\begin{equation}
\label{eq:Problem_local}
u_t-Lu=0  \hbox{ on } (0, T)\times \cB_R. 
\end{equation}
Then we have that
\begin{equation}
\label{eq:Boundary_reg}
u \in C^{\infty}((0,T)\times\bar \cB_r),\quad\forall\, r\in (0,R),
\end{equation}
and for all $\fa\in\NN^n$, $\fb\in\NN^m$, and $\fc\in\NN$, there is a positive constant,
$C=C(\fa,\fb,\fc,L,R,t,T)$, such that 
\begin{equation}
\label{eq:Boundary_est}
\|D^{\fa}_xD^{\fb}_yD^{\fc}_t u\|_{C([t,T]\times\bar\cB_{R/4})} \leq C \|u\|_{L^2((0,T);L^2(\cB_R;d\mu))}.
\end{equation}
Moreover, for all $s\in [t,T]$ and for all $z\in \bar\cB_{R/4}$, the more exact pointwise estimates hold:
\begin{align}
\label{eq:Sup_est_sol_1_boundary}
|D^{\fb}_y u(s,z)| &\leq C \|u\|_{L^2((0,T);L^2(\cB_R;d\mu))} \prod_{i=1}^{n_0} x_i,
\end{align}
and for all $1\leq k\leq n_0$, we have that
\begin{align}
\label{eq:Sup_est_sol_2_boundary}
|D^{\fe_k}_xD^{\fb}_y u(s,z)| 
&\leq C \|u\|_{L^2((0,T);L^2(\cB_R;d\mu))} \prod_{\stackrel{i=1}{i \neq k}}^{n_0} x_i,
\end{align}
and for all $n_0+1\leq l\leq n$, we have that
\begin{align}
\label{eq:Sup_est_sol_3_boundary}
|D^{\fe_l}_xD^{\fb}_yu(s,z)| 
&\leq C \|u\|_{L^2((0,T);L^2(\cB_R;d\mu))} \prod_{i=1}^{n_0} x_i.
\end{align}
\end{thm}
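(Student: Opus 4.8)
\emph{Proof strategy.} The plan is to argue in four stages: (1) localize and put $L$ in its standard corner normal form; (2) upgrade the weak solution to a bounded, continuous one by an energy argument adapted to $d\mu$; (3) bootstrap to $C^\infty$ up to the boundary, together with the crude bound \eqref{eq:Boundary_est}; and (4) extract the sharp vanishing rates \eqref{eq:Sup_est_sol_1_boundary}--\eqref{eq:Sup_est_sol_3_boundary} by a Hadamard factorization. Since the assertion is local, first fix radii $R/4\le r'<r<R$ and a cutoff $\varphi$ equal to $1$ on $\cB_{r'}$ and supported in $\cB_r$; after the normalization \eqref{eq:Operator_adapted_system} all estimates reduce to estimates for $\varphi u$ on a fixed neighbourhood of the origin in $\bar S_{n,m}$. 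Passing to the variables $\rho_i=\sqrt{x_i}$ ($1\le i\le n$) and $y_l$ ($1\le l\le m$), the distance to $\{x_i=0\}$ becomes comparable to $\rho_i$, the measure $d\mu$ of \eqref{eq:Weight} becomes $\prod_{i=1}^{n_0}\rho_i^{-1}\,d\rho_i\prod_{j=n_0+1}^{n}\rho_j^{2b_j-1}\,d\rho_j\prod_{l=1}^{m}dy_l$, and, modulo off-diagonal and lower-order terms that are controlled relative to it, $L$ is a perturbation of a sum of one-dimensional Bessel-type model operators in the $\rho_i$ and a uniformly elliptic block in $y$; the cleanness condition \eqref{eq:Cleanness} and \eqref{eq:n_0} guarantee that each model drift is either bounded away from $0$ (for $i>n_0$) or vanishes to first order in $x_i$ (for $i\le n_0$). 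This is exactly the local normal form for which the Schauder-type theory of generalized Kimura operators of \cite{Epstein_Mazzeo_annmathstudies,Epstein_Mazzeo_2016} applies.

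For Stages 2 and 3: starting from $u\in L^2((0,T);L^2(\cB_R;d\mu))$, the weak formulation of \S\ref{sec:Weak_sol} together with test functions built from $\varphi$ gives iterated energy (Caccioppoli) inequalities on a chain of shrinking parabolic boxes, hence control of the degenerate gradient $(\sqrt{x_i}\,\partial_{x_i}u,\partial_{y_l}u)$ in $L^2$; a Moser-type iteration against the Sobolev inequality attached to $d\mu$ and this gradient then yields a first pointwise bound $\|u\|_{C([t',T]\times\bar\cB_{r''})}\le C\|u\|_{L^2((0,T);L^2(\cB_R;d\mu))}$ on slightly shrunk boxes. With $u$ now genuinely bounded and continuous, one bootstraps: $\partial_t u=Lu$ shows that each $\partial_t^{\fc}u$ solves the same equation, while commuting $\partial_{y_l}$ and the weight-compatible fields $x_i\partial_{x_i}$ through the equation produces equations of the same type with right-hand sides controlled by lower-order data. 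Repeatedly invoking the interior/boundary Schauder estimates for $L$ in the Wright--Fisher H\"older spaces of \cite{Epstein_Mazzeo_annmathstudies,Epstein_Mazzeo_2016} and the embedding of those spaces into ordinary $C^k$ up to $\partial S_{n,m}$ gives \eqref{eq:Boundary_reg} and \eqref{eq:Boundary_est}, with a constant depending on $\fa,\fb,\fc,L,R,t,T$ (the dependence on $t>0$ is unavoidable, as no initial condition is imposed). I expect the genuine difficulty to be concentrated here: one must run these estimates \emph{uniformly up to the corner} $\{x_1=\cdots=x_n=0\}$, absorbing the drift terms $b_i\partial_{x_i}$ — which, as noted after \eqref{eq:Uniform_ellipticity}, are not of lower order at $\{x_i=0\}$ — and the cross terms $x_ix_ja_{ij}\partial_{x_i}\partial_{x_j}$ and $x_ic_{il}\partial_{x_i}\partial_{y_l}$, and then to track the constants back to $\|u\|_{L^2(d\mu)}$ through the entire bootstrap.

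Stage 4. By \eqref{eq:Boundary_reg}, $u\in C^{\infty}([t,T]\times\bar\cB_{R/4})$. Because the weak solutions considered satisfy homogeneous Dirichlet conditions on $\bigcup_{i=1}^{n_0}\{x_i=0\}$, continuity up to the boundary gives $u\restrictedto_{\{x_i=0\}\cap\bar\cB_{R/4}}=0$ for every $1\le i\le n_0$ (equivalently, finiteness of $\int x_i^{-1}|u|^2\,d\mu$ forces the trace to vanish there). Since the $\{x_i=0\}$, $1\le i\le n_0$, are distinct coordinate hyperplanes, iterated Hadamard division by $x_1,\dots,x_{n_0}$ — each quotient still vanishing on the remaining hyperplanes — yields $w\in C^{\infty}([t,T]\times\bar\cB_{R/4})$ with
\begin{equation*}
u(s,z)=\Bigl(\prod_{i=1}^{n_0}x_i\Bigr)w(s,z),\qquad \|w\|_{C^{k}([t,T]\times\bar\cB_{R/4})}\le C(k)\,\|u\|_{C^{k+n_0}([t,T]\times\bar\cB_{R/4})}\le C\,\|u\|_{L^2((0,T);L^2(\cB_R;d\mu))},
\end{equation*}
the last inequality by \eqref{eq:Boundary_est}. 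Then \eqref{eq:Sup_est_sol_1_boundary}--\eqref{eq:Sup_est_sol_3_boundary} follow by Leibniz's rule: $D^{\fb}_yu=\bigl(\prod_{i\le n_0}x_i\bigr)D^{\fb}_yw$; for $1\le k\le n_0$, $D^{\fe_k}_xD^{\fb}_yu=\bigl(\prod_{i\le n_0,\,i\neq k}x_i\bigr)D^{\fb}_y\bigl(w+x_k\partial_{x_k}w\bigr)$; and for $n_0+1\le l\le n$, $D^{\fe_l}_xD^{\fb}_yu=\bigl(\prod_{i\le n_0}x_i\bigr)D^{\fb}_y\partial_{x_l}w$ — in each case the remaining factor is smooth up to $\bar\cB_{R/4}$ with $C^0$-norm bounded by $C\|u\|_{L^2((0,T);L^2(\cB_R;d\mu))}$. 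Thus Stages 1 and 4 are essentially bookkeeping once the normal form and the factorization are in place, Stage 2 is a standard De Giorgi--Nash--Moser argument adapted to $d\mu$, and the substantive work is the boundary Schauder bootstrap of Stage 3, where uniformity at the corners and the quantitative dependence on $\|u\|_{L^2(d\mu)}$ are the delicate points.
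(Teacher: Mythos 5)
Your Stage 4 is sound \emph{once you have Stage 3}, and in fact your Hadamard division by $\prod_{i\le n_0}x_i$ is nothing other than the operation $u\mapsto \widetilde u:=w^T u$ that the paper takes as its central device (with $w^T(z)=\prod_{i=1}^{n_0}x_i^{-1}$). The trouble is where you place it. The genuine gap is Stage 2: a De Giorgi--Nash--Moser iteration against $d\mu$ cannot get off the ground here. When $n_0>0$ the measure $d\mu$ of \eqref{eq:Weight} carries factors $x_i^{-1}$, so $\int_{\cB_r}d\mu=\infty$ for every $r>0$ and $d\mu$ is non-doubling at $\{x_i=0\}$; the Sobolev inequality and the volume-ratio bookkeeping that power Moser iteration both fail. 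The paper says this explicitly when contrasting its method with \cite{Epstein_Mazzeo_2016}: the Moser argument applies only when all weights are positive. So your claimed bound $\|u\|_{C([t',T]\times\bar\cB_{r''})}\le C\|u\|_{L^2(d\mu)}$ does not follow by the route you propose, and the Schauder bootstrap of Stage 3, which needs that $C^0$ input, is left without a foundation.

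What the paper does instead is reverse the order of your Stages 2--4. It first proves higher-order weighted Sobolev estimates (Theorem \ref{thm:Regularity_Sobolev}) by differentiating the equation directly and estimating each $D^\fa_xD^\fb_y u$ in the weighted space $L^2(\cB_r;d\mu_\fa)$; the non-smooth corner is handled not by Moser but by an approximation of the weak solution with globally smooth solutions (Lemma \ref{lem:Approx_smooth}), which uses the structural fact due to Sato that the operator restricts to each face. From the weighted Sobolev bounds, a quantitative embedding from \cite[Theorem 3.2]{Epstein_Mazzeo_2016} plus integration of $D^\fe_xD^\fb_yu$ from the faces $\{x_i=0\}$ (Lemma \ref{lem:Sup_est_smooth} and Theorem \ref{thm:Sup_est}) yields the sharp pointwise vanishing rates \eqref{eq:Sup_est_sol_1}--\eqref{eq:Sup_est_sol_3} \emph{before} any $C^\infty$ statement. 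Only then does the paper perform the conjugation: those pointwise bounds are exactly what is needed to show $\widetilde u=w^T u$ lies in $L^2((t,T);H^1(\cB_r;d\widetilde\mu))$ and solves a generalized Kimura equation with \emph{positive} weights (Lemmas \ref{eq:tilde_u_space}--\ref{eq:tilde_u_solution}); for $\widetilde u$ the measure $d\widetilde\mu$ is finite and doubling and the positive-weight $C^0$ and $C^\infty$ results of \cite{Epstein_Mazzeo_2016, Pop_2013b} apply, producing \eqref{eq:C_l_tilde_u} and hence \eqref{eq:Boundary_reg}--\eqref{eq:Boundary_est}. In short: you should prove the sharp vanishing rates first, by the weighted-Sobolev-plus-trace route, and use the $w^T$ conjugation as the engine of the $C^\infty$ bootstrap rather than as an afterthought; your Moser step has to be abandoned, and the Schauder estimates you want to invoke should be applied to $\widetilde u$ and its positive-weight operator $\widetilde L$, not to $u$ and $L$.
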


The difficulty in establishing the boundary regularity properties stated in
Theorem \ref{thm:Boundary_reg} arises from the fact that the operator is not
strictly elliptic up to the boundary $\partial S_{n,m}.$ Moreover, the boundary
of the domain $S_{n,m}$ is non-smooth, and the weight function defined in
\eqref{eq:Weight} is singular; when $n_0>0$
$$
\int_{\cB_r} d\mu = \infty,\quad\forall\, r>0.
$$
Our approach to establish the boundary regularity is based on first proving higher-order regularity in weighted Sobolev spaces, which we then combine with a conjugation property of generalized Kimura operators, know in probability under the name of Doob's $h$-transform \cite{Doob_1957} to obtain the pointwise estimates in Theorem \ref{thm:Boundary_reg}. This property was previously used in the study of particular cases of Kimura operators in articles such as \cite{Shimakura_1981, Epstein_Wilkening_2015}.

We can compare our Theorem \ref{thm:Boundary_reg} with the supremum estimates
of solutions derived by C.~L.~Epstein and R.~Mazzeo in
\cite{Epstein_Mazzeo_2016}, in the case when the weights $\{b_i:1\leq i\leq
n\}$ of the operator $L$ are all positive. Our estimates extend the supremum
estimates in \cite{Epstein_Mazzeo_2016} because: (a) we allow the weights of
the operator to be $0$ also; (b) we prove estimates of the solution and also of
its higher order derivatives; (c) we give a pointwise description of the
boundary behavior of solutions; (d) we derive a boundary Harnack principle
which implies that the pointwise estimates in Theorem \ref{thm:Boundary_reg}
are optimal. Moreover, our method of proof of the supremum estimates is
completely different from that in \cite{Epstein_Mazzeo_2016}, which is based on
employing the Moser iteration method. When the operator $L$ has zero weights,
the measure \eqref{eq:Weight} is non-finite and non-doubling, and so it is not
possible to prove the estimates stated in Theorem \ref{thm:Boundary_reg} using
ideas based on Moser or De Giorgi iterations.

\subsubsection{Boundary Harnack principle}
\label{sec:Boundary_Harnack_princ}

In this section we state our main results concerning the \emph{boundary Harnack principle} satisfied by nonnegative local weak solutions. These consists in the proof of a Carleson-type estimate \eqref{eq:Carleson} and of a local boundary comparison principle \eqref{eq:Quotient_bounds_sup}. Estimates \eqref{eq:Carleson} and \eqref{eq:Quotient_bounds_sup} are an extension to the class of degenerate Kimura operators of the corresponding estimates satisfied by nonnegative solutions to parabolic problems defined by strictly elliptic operators established in \cite[Theorem 3.1]{Salsa_1981}, \cite[Theorem 1.1]{Fabes_Garofalo_Salsa_1986}, \cite[Theorem 3.3]{Fabes_Safonov_Yuan_1999}, \cite[Theorem 2.3]{Garofalo_1984} and \cite[Theorem 1.6]{Fabes_Garofalo_Salsa_1986}, \cite[Theorem 5]{Fabes_Safonov_1997}, \cite[Theorem 4.3]{Fabes_Safonov_Yuan_1999}, \cite[Theorems 3.1 and 3.2]{Garofalo_1984}, respectively. Moreover, we derive a Hopf-Oleinik-type estimate \eqref{eq:Hopf_Oleinik}, which is an extension to the class of the degenerate Kimura operators of the parabolic Hopf-Oleinik boundary principle for strictly elliptic operators, \cite{Friedman_1958, Viborni_1957}. Our method of the proof is based on the conjugation property of generalized Kimura operators described in \S\ref{sec:Supremum_estimates}, and circumvents any use of estimates of the fundamental solution, of the relationship between the Green's function and the caloric measure, or of Landis-type growth estimates.

To state the results we need to introduce the following notation. For a point $(t,z)\in(0,T)\times\partial S_{n,m}$ and 
$r<\sqrt{t}/2$, we denote
\begin{align*}
Q^+_r(t,z)&:=(t+r^2,t+2r^2)\times B_r(z),\\
Q_r(t,z)&:=(t-r^2,t)\times B_r(z),\\
Q^-_r(t,z)&:=(t-3r^2,t-2r^2)\times B_r(z),
\end{align*}
where $B_r(z):=\{z'\in S_{n,m}:\,\rho(z,z')<r\}$ and $\rho(z,z')$ denotes a distance function equivalent with the intrinsic Riemannian metric defined by the principal symbol of the operator $L$, and is given by
\begin{equation}
\label{eq:Distance_function}
\rho(z,z')=\left(\sum_{i=1}^n\left|\sqrt{x_i}-\sqrt{x_i'}\right|^2 + \|y-y'\|^2\right)^{1/2},
\end{equation}
for all $z=(x,y)$ and $z'=(x',y')$ in $\bar S_{n,m}$. Using the inequality $\sqrt{a+b}-\sqrt{b}\leq \sqrt{a}$, for all $a,b\geq 0$, it follows from definition \eqref{eq:Distance_function} that the point
\begin{equation}
\label{eq:A_r}
A_r(z):=\left(x_1+\frac{r^2}{4n},\ldots,x_n+\frac{r^2}{4n},y_1+\frac{r}{2\sqrt{m}},\ldots,y_m+\frac{r}{2\sqrt{m}}\right)
\end{equation}
belongs to $B_r(z)$, for all $z\in\partial S_{n,m}$. Let
\begin{equation}
\label{eq:Weight_density_tangent}
w^T(z) := \prod_{i=1}^{n_0} x_i^{-1}.
\end{equation}

\begin{thm}
\label{thm:Carleson_Hopf_Oleinik}
Assume that the operator $L$ in \eqref{eq:Operator} satisfies \eqref{eq:Operator_adapted_system} and Assumption \ref{assump:Coeff}. 
There is a positive constant, $H=H(\fb,L,n,m)$, such that for all $z\in(\partial\cB_1\cap\partial S_{n,m})$, $r\in (0,1)$, and $t>4r^2$, if $u$ is a nonnegative local weak solution to equation: 
$$
u_t-Lu=0\hbox{ on } (t-4r^2,t+4r^2)\times B_{2r}(z),
$$
then the following hold:
\begin{enumerate}
\item[(i)] (Carleson-type estimate)
\begin{equation}
\label{eq:Carleson}
\sup_{Q_r(t,z)} w^T u \leq H w^T(A_r(z)) u(t+r^2, A_r(z)).
\end{equation}
\item[(ii)] (Hopf-Oleinik-type estimate)
\begin{equation}
\label{eq:Hopf_Oleinik}
\inf_{Q_r(t,z)} w^T u \geq H w^T(A_r(z)) u(t-2r^2, A_r(z)).
\end{equation}
\item[(iii)] (Quotient bounds)
\begin{align}
\label{eq:Quotient_bounds_sup}
\sup_{Q_r(t,z)} \frac{u_1}{u_2} 
&\leq H \frac{u_1(t+r^2, A_r(z))}{u_2(t-2r^2, A_r(z))},\\
\label{eq:Quotient_bounds_inf}
\inf_{Q_r(t,z)} \frac{u_1}{u_2} 
&\geq H^{-1} \frac{u_1(t-2r^2, A_r(z))}{u_2(t+r^2, A_r(z))}. 
\end{align}
\end{enumerate}
\end{thm}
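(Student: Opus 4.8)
The plan is to reduce all three estimates, by way of the conjugation (Doob $h$-transform) property of generalized Kimura operators established in \S\ref{sec:Supremum_estimates}, to a single scale-invariant parabolic Harnack inequality for an auxiliary operator all of whose weights are \emph{strictly positive}, and then to derive the Carleson-, Hopf--Oleinik-, and quotient-type bounds by Harnack chaining. Concretely, set $h(z):=\prod_{i=1}^{n_0}x_i=1/w^T(z)$ and $\wt L:=h^{-1}\circ L\circ h$. A routine computation in the coordinates satisfying \eqref{eq:Operator_adapted_system} shows that $\wt L$ is again a generalized Kimura operator obeying Assumption \ref{assump:Coeff}, with the same strict ellipticity constant, with weight $2$ on each face $\{x_i=0\}$, $1\le i\le n_0$, and weight $b_i\restrictedto_{\{x_i=0\}}$ on each face $\{x_i=0\}$, $n_0+1\le i\le n$ — so that all of its weights are positive — the remaining lower-order coefficients picking up only bounded smooth corrections. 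By the conjugation property, together with the boundary regularity and the pointwise bound $|u|\lesssim\prod_{i=1}^{n_0}x_i$ from Theorem \ref{thm:Boundary_reg} (which are what guarantee that $v:=w^T u=h^{-1}u$ is a bona fide nonnegative local weak solution of $v_t-\wt L v=0$ on $(t-4r^2,t+4r^2)\times B_{2r}(z)$, the homogeneous Dirichlet conditions on $\{x_i=0\}$, $1\le i\le n_0$, being converted into the natural boundary behavior for the positive-weight problem), the estimates to be proved become, after cancelling the factor $w^T$ from both sides,
\[
\sup_{Q_r(t,z)}v\le H\,v(t+r^2,A_r(z)),\qquad \inf_{Q_r(t,z)}v\ge H^{-1}\,v(t-2r^2,A_r(z)),
\]
together with the corresponding quotient bounds for two such solutions $v_1,v_2$. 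It therefore suffices to establish these for the positive-weight operator $\wt L$.

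The main input is the scale-invariant parabolic Harnack inequality for $\wt L$ that holds \emph{uniformly up to the boundary} of $S_{n,m}$, corners included: there is $C=C(\wt L,n,m)$ such that whenever $w\ge 0$ solves $w_t-\wt L w=0$ on $(s-4\rho^2,s+4\rho^2)\times B_{2\rho}(\zeta)$, with $\zeta\in\bar S_{n,m}$ arbitrary, then $\sup_{Q^-_\rho(s,\zeta)}w\le C\inf_{Q^+_\rho(s,\zeta)}w$. Because all weights of $\wt L$ are positive, the associated measure is finite and doubling (also for balls centered on the boundary), the metric--measure space $(\bar S_{n,m},\rho)$ carries scale-invariant Poincar\'e inequalities, and weak solutions are bounded, non-degenerate and smooth up to every face; granting that, the Harnack inequality follows from the Moser iteration of \cite{Epstein_Mazzeo_2016}, supplemented near faces with weight in $(0,1)$ by comparison with the explicit one-variable sub- and supersolutions built from $1$ and $x_i^{1-b_i}$, with a constant independent of the distance from $\zeta$ to $\partial S_{n,m}$. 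I would also record two elementary geometric facts. First, by \eqref{eq:Distance_function} the substitution $\xi_i=\sqrt{x_i}$ identifies $(\bar S_{n,m},\rho)$ isometrically with the convex set $[0,\infty)^n\times\RR^m$ equipped with its Euclidean metric. Second, $A_r(z)\in B_r(z)$ has $i$-th coordinate $\ge r^2/(4n)$, hence lies at $\rho$-distance $\ge r/(2\sqrt n)$ from $\partial S_{n,m}$. By convexity, any point of $B_r(z)$ can be joined to $A_r(z)$ by a chain of $\rho$-balls of radius comparable to $r$ whose number $N$ is bounded in terms of $n$ and $m$ alone, and — since $t>4r^2$ — the associated parabolic cylinders all lie in the region where $v$ solves the equation.

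Granting this, the Carleson-type estimate is proved by taking a point $(\bar t,\bar z)\in\overline{Q_r(t,z)}$ where the (boundary-continuous) solution $v$ attains its supremum, observing $\bar t\le t<t+r^2$, and iterating the Harnack inequality along a forward-in-time chain of length $\le N$ from $(\bar t,\bar z)$ to $(t+r^2,A_r(z))$, which gives $v(\bar t,\bar z)\le C^N v(t+r^2,A_r(z))$. The Hopf--Oleinik-type estimate is symmetric: for arbitrary $(\bar t,\bar z)\in Q_r(t,z)$ one has $\bar t\ge t-r^2>t-2r^2$, and iterating the Harnack inequality forward from $(t-2r^2,A_r(z))$ to $(\bar t,\bar z)$ gives $v(t-2r^2,A_r(z))\le C^N v(\bar t,\bar z)$, whence the lower bound for $\inf_{Q_r(t,z)}v$. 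The quotient bounds then follow immediately: for nonnegative solutions $v_1,v_2$,
\[
\sup_{Q_r(t,z)}\frac{v_1}{v_2}\le\frac{\sup_{Q_r(t,z)}v_1}{\inf_{Q_r(t,z)}v_2}\le C^{2N}\,\frac{v_1(t+r^2,A_r(z))}{v_2(t-2r^2,A_r(z))},
\]
with the reverse inequality obtained in the same way; undoing the $h$-transform then reinstates $w^T$, with $H$ depending only on $C$ and on $N=N(n,m)$.

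The step I expect to be the real obstacle is proving the scale-invariant Harnack inequality for $\wt L$ \emph{uniformly up to the corners} of $S_{n,m}$ — i.e.\ running the Moser iteration and the boundary-barrier comparison simultaneously at all faces and all lower-dimensional strata, with constants that do not degenerate as one approaches a corner — together with the (mostly bookkeeping, but delicate) verification that conjugation by $h$ really does send a weak solution of the partial-Dirichlet problem for $L$ to a weak solution of the boundary-condition-free problem for $\wt L$ in the appropriate weighted Sobolev space. Once these are in hand, the Euclidean geometry of $(\bar S_{n,m},\rho)$, the Harnack chains, and the quotient bounds are routine.
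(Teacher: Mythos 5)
Your proposal follows essentially the same route as the paper: conjugate by $w^T$ so that $\widetilde u := w^T u$ is a nonnegative local weak solution of $\widetilde u_t - \widetilde L \widetilde u = 0$ for the positive-weight operator $\widetilde L$ of \eqref{eq:tilde_L}, apply a parabolic Harnack inequality for $\widetilde L$ up to $\partial S_{n,m}$, and translate back. Your identification of the weights of $\widetilde L$ ($2$ on the formerly tangent faces, unchanged on the others) is correct, as is the overall logic.

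The step you flag as ``the real obstacle'' --- establishing the scale-invariant parabolic Harnack inequality for $\widetilde L$ uniformly up to the corners --- is not something the paper re-derives; it is cited directly from \cite[Theorem 1.2]{Epstein_Pop_2013b} and \cite[Theorem 4.1]{Epstein_Mazzeo_2016}, which prove precisely this for positive-weight generalized Kimura operators. From those results the paper gets $\sup_{Q_r(t,z)}\widetilde u \le H\inf_{Q^+_r(t,z)}\widetilde u$ and $\sup_{Q^-_r(t,z)}\widetilde u\le H\inf_{Q_r(t,z)}\widetilde u$ directly; specializing the inf or sup to the single point $A_r(z)\in B_r(z)$ at time $t+r^2$ or $t-2r^2$ then yields \eqref{eq:Carleson} and \eqref{eq:Hopf_Oleinik} without any Harnack chaining, so your chaining argument, while correct, is superfluous once the cited Harnack inequality is used in the form it already comes in. Also, the verification that $\widetilde u$ is a genuine local weak solution of the conjugated problem is the content of the paper's Lemma \ref{eq:tilde_u_solution}; it requires the full set of supremum estimates in Theorem \ref{thm:Sup_est} (not only the pointwise bound $|u|\lesssim\prod_{i\le n_0}x_i$ you quote) to show the boundary integrals $I^\eps_i$ vanish as $\eps\downarrow 0$, so treat that as a proved lemma rather than as bookkeeping to be redone.
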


\begin{rmk}
\label{rmk:Hopf_Oleinik}
The Hopf-Oleinik-type boundary estimate \eqref{eq:Hopf_Oleinik} and definition \eqref{eq:Weight_density_tangent} of the weight $w^T(z)$ show that, when $u$ is a positive solution on $(t-4r^2,t+4r^2)\times B_r(z)$, then the pointwise supremum estimate \eqref{eq:Sup_est_sol_1_boundary} (applied with $\fb=0$) is optimal.
\end{rmk}

For a closed set $\Omega\subseteq [0,\infty)\times\bar S_{n,m}$, the anisotropic H\"older space $C^{\alpha}_{WF}(\Omega)$ consists of functions $u$ such that
\begin{equation}
\label{eq:Holder_space}
\|u\|_{C^{\alpha}_{WF}(\Omega)} := \sup_{(s,z)\in\Omega} |u(s,z)| 
+ \sup_{\stackrel{(s_1,z_1)\neq(s_2,z_2)}{(s_i,z_i)\in\Omega, i=1,2}} 
\frac{|u(s_1,z_1)-u(s_2,z_2)|}{\left(\sqrt{|s_1-s_2|}+\rho(z_1,z_2)\right)^{\alpha}}.
\end{equation}
We can now state the analogue of the boundary comparison principle for nonnegative solutions defined by strictly elliptic operators \cite[Theorem 7]{Fabes_Safonov_1997}, \cite[Theorems 4.5 and 4.6 ]{Fabes_Safonov_Yuan_1999} for the class of degenerate Kimura operators.

\begin{thm}[Boundary comparison principle]
\label{thm:Holder_cont}
Assume that the operator $L$ in \eqref{eq:Operator} satisfies \eqref{eq:Operator_adapted_system} and Assumption \ref{assump:Coeff}. 
There is a positive constant, $\alpha=\alpha(\fb,L,n,m)\in (0,1)$, such that for all $z\in(\partial\cB_1\cap\partial S_{n,m})$, $r\in (0,1)$, and $t>4r^2$, if $u_1$ is a nonnegative local weak solution, and $u_2$ is a \emph{positive} local weak solution  to equation: 
$$
u_t-Lu=0\hbox{ on } (t-4r^2,t+4r^2)\times B_{2r}(z),
$$
then we have that
\begin{equation}
\label{eq:Holder_cont}
\frac{u_1}{u_2} \in C^{\alpha}_{\hbox{\tiny{WF}}}(\bar Q_r(t,z)).
\end{equation}
\end{thm}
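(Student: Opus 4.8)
The plan is to deduce the Hölder continuity of the quotient $u_1/u_2$ from the quotient bounds in Theorem \ref{thm:Carleson_Hopf_Oleinik}(iii) via the standard oscillation-decay argument, adapted to the parabolic anisotropic geometry defined by the distance $\rho$. First I would fix $z\in\partial\cB_1\cap\partial S_{n,m}$, $r\in(0,1)$, and $t>4r^2$, and consider the family of parabolic boxes $Q_{r_j}(t_j,z_j)$ obtained by dyadic scaling $r_j = r/2^j$, centered at boundary points $z_j$ and times $t_j$ chosen so that these boxes are nested and their closures shrink to a fixed point $(s_0,z_0)\in\bar Q_r(t,z)\cap(\mathbb{R}\times\partial S_{n,m})$. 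Applying \eqref{eq:Quotient_bounds_sup} and \eqref{eq:Quotient_bounds_inf} on each scale, together with the interior parabolic Harnack inequality for $u_2$ (to control the ratio of the values $u_2(t_j+r_j^2,A_{r_j}(z_j))$ and $u_2(t_j-2r_j^2,A_{r_j}(z_j))$ by a dimensional constant), one obtains
\[
\osc_{Q_{r_{j+1}}(t_{j+1},z_{j+1})} \frac{u_1}{u_2} \leq \theta\, \osc_{Q_{r_j}(t_j,z_j)} \frac{u_1}{u_2}
\]
for some $\theta\in(0,1)$ depending only on $H,n,m$; iterating gives the decay rate $\rho$-to-the-$\alpha$ with $\alpha = \log(1/\theta)/\log 2$, hence Hölder continuity at each boundary point.

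The passage from boundary decay-of-oscillation to the full norm \eqref{eq:Holder_space} on $\bar Q_r(t,z)$ then requires combining the boundary estimate with interior parabolic regularity. Away from $\partial S_{n,m}$ the operator $L$ is uniformly parabolic (the $x_i$ are bounded below), so $u_1,u_2$ and hence $u_1/u_2$ (using positivity of $u_2$ and the interior Harnack lower bound) are smooth with locally uniform estimates; a standard interpolation between the interior $C^1$-bound at scale comparable to $\rho(z,\partial S_{n,m})$ and the boundary Hölder bound yields the uniform Hölder seminorm across all pairs of points, whether interior, boundary, or mixed. This two-regime patching is the technically fussy part but is routine once the scaling is set up correctly; the geometry of $\rho$ in \eqref{eq:Distance_function} is precisely the one in which $Q_r$ scales consistently, so the dyadic argument closes.

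The main obstacle is the first step: the oscillation-decay mechanism requires that, at each scale, one can reduce the oscillation of $u_1/u_2$ by comparing $u_1$ to $u_2$ from above and below using a single reference point and a \emph{scale-invariant} constant. This is delivered by Theorem \ref{thm:Carleson_Hopf_Oleinik}(iii), but one must verify that the Harnack-type control of $u_2$ between the forward and backward reference points $A_{r}(z)$ at consecutive scales is also scale-invariant — this uses the interior parabolic Harnack inequality for $L$ on the non-degenerate region together with the definition \eqref{eq:A_r} of $A_r(z)$, which keeps these points at controlled $\rho$-distance from the boundary (distance $\sim r$) so that the relevant parabolic cylinders lie in a region where $L$ is uniformly parabolic with constants independent of $r$. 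Once this scale-invariance is checked, the constant $\theta<1$ and the exponent $\alpha$ emerge automatically, and the proof follows the classical Fabes--Safonov pattern referenced before the statement.
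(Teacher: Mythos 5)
Your proposal has a genuine gap at the critical step of the oscillation-decay iteration. To close the iteration you need a \emph{two-sided} bound, uniform in the scale $j$, on the ratio
\[
\frac{u_2\bigl(t_j+r_j^2, A_{r_j}(z_j)\bigr)}{u_2\bigl(t_j-2r_j^2, A_{r_j}(z_j)\bigr)},
\]
but the interior parabolic Harnack inequality only furnishes the forward-in-time bound $u_2(t_j-2r_j^2, A_{r_j}) \leq C\, u_2(t_j+r_j^2, A_{r_j})$. The reverse bound is a backward-in-time Harnack inequality, and it is false for general positive local solutions: for instance $u(t,x)=e^{\lambda t+\sqrt{\lambda}\,x_1}$ is a positive caloric function on any cylinder, and the ratio at the two reference times is $e^{3\lambda r_j^2}$, which is not controlled by a universal constant. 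In the Fabes--Safonov--Yuan framework that you are emulating, the backward Harnack \emph{is} available, but only because their solutions are assumed to vanish continuously on the entire lateral parabolic boundary of the cylinder; Theorem~\ref{thm:Holder_cont} makes no such assumption — $u_1,u_2$ are merely local weak solutions on $(t-4r^2,t+4r^2)\times B_{2r}(z)$, with a Dirichlet condition only along $\partial^T S_{n,m}$ and nothing imposed on $\partial B_{2r}(z)\setminus\partial S_{n,m}$. Consequently your contraction factor $\theta$ in
\[
\osc_{Q_{r_{j+1}}} \frac{u_1}{u_2} \leq \theta\, \osc_{Q_{r_j}} \frac{u_1}{u_2}
\]
is not a universal constant but depends on $u_2$ through its time-ratio at the reference points, and the dyadic iteration does not yield a uniform exponent $\alpha$.

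The paper avoids this obstruction entirely by taking a different route. Applying the $h$-transform $\widetilde u_i := w^T u_i$ of \eqref{eq:tilde_u}, each $\widetilde u_i$ is a local weak solution (by Lemma~\ref{eq:tilde_u_solution}) of the parabolic equation for the positive-weight Kimura operator $\widetilde L$ of \eqref{eq:tilde_L}, and $u_1/u_2 = \widetilde u_1/\widetilde u_2$. The up-to-the-boundary H\"older regularity result of \cite[Corollary~4.1]{Epstein_Mazzeo_2016} for positive-weight Kimura operators then gives $\widetilde u_i \in C^{\alpha}_{\hbox{\tiny{WF}}}(\bar Q_r(t,z))$ with a universal $\alpha$, and the Harnack inequality for $\widetilde L$ bounds $\widetilde u_2$ away from zero on $\bar Q_r(t,z)$, so the quotient is H\"older by the algebra of H\"older functions. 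No oscillation-decay iteration, no quotient bounds, and no backward Harnack are needed; this is exactly what allows the theorem to dispense with lateral boundary vanishing. If you wish to salvage the iterative approach you would first have to prove a backward Harnack inequality for $u_2$ up to the degenerate boundary — which is itself a nontrivial result the paper does not establish in the local setting (the elliptic-type Harnack of Lemma~\ref{lem:Elliptic_Harnack} is proved only for global solutions, using the maximum principle on the compact manifold $P$).
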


\subsection{Boundary regularity of global solutions}
\label{sec:Global_regularity}
Before proceeding to the main result of this section about the regularity up to the boundary and the Harnack-type inequalities satisfied by \emph{global} solutions to the parabolic Dirichlet problem defined by a degenerate second-order operator, $\cL$, we review the definitions used in the statement of the result, which were introduced in \cite[\S 2]{Epstein_Mazzeo_2016} and \cite[\S 2.1]{Epstein_Mazzeo_annmathstudies}.

Let $P$ be a compact manifold with corners of dimension $N$. Given a point $p\in P$, we can find nonnegative integers, $n, m\in \NN$, such that $n+m=N$, and we can choose a local system of coordinates, $\psi:U\rightarrow V$, such that $U$ is a relatively open neighborhood of $p$ in $P$, $V$ is a relatively open neighborhood of the origin in $\bar S_{n,m}$, and $\psi$ is a homeomorphism with the property that $\psi(p)=0$. In this section we consider second-order differential operators $\cL$ defined on compact manifolds with corners $P$ with the property that, when written in a local system of coordinates on $P$, the operator $\cL$ takes the form of the operator $L$ in \eqref{eq:Operator}. In addition, applying \cite[Proposition 2.2.3]{Epstein_Mazzeo_annmathstudies}, we can find a local system of coordinates such that the operator $L$ satisfies conditions \eqref{eq:Operator_adapted_system}. Following \cite[\S 2.1]{Epstein_Mazzeo_annmathstudies}, we call this a normal form of the operator $L$ and the coordinate system is said to be adapted. 

The set of points $p\in P$ that have a local system of coordinates that map a neighborhood of the point $p$ onto a neighborhood of the origin in $\bar S_{1,N-1}$ lie in an open smooth manifold of co-dimension $1$. Such a manifold can be written as a disjoint union of open smooth connected manifolds of co-dimension $1$, which we call the boundary hypersurfaces or faces of $P$. We denote the closure of the connected boundary hypersurfaces of $P$ by $H_1, H_2,\ldots, H_k$. Recall from \cite[\S 2.2]{Epstein_Mazzeo_annmathstudies} that the principal symbol of the operator $\cL$ induces a Riemannian metric on the manifold $P$. Letting $H_i$ be a boundary hypersurface and defining $\rho_i(p)$ to be the Riemannian distance from the point $p\in P$ to $H_i$, we recall from \cite[Proposition 2.1]{Epstein_Mazzeo_2016} that
\begin{equation}
\label{eq:Weight_b_i}
B_i\restrictedto_{H_i} := L\rho_i\restrictedto_{H_i},\quad\forall\, 1\leq i\leq k,
\end{equation}
are coordinate-invariant quantities, and we call them the \emph{weights} of the generalized Kimura operator, \cite[Definition 2.1]{Epstein_Mazzeo_2016}.

Analogously to \S \ref{sec:Local_sup_est}, we choose smooth extensions of the
weights $\{B_i:1\leq i\leq k\}$ of the operator $\cL$ from the boundary
hypersurfaces of $P$ to the interior of $P$. By an abuse of notation, we denote
the smooth extension of the weight corresponding to the boundary hypersurface
$H_i$ by $B_i$, for $1\leq i\leq k$, and we sometimes refer to them also as the
weights of the generalized Kimura operator. Fixing a sufficiently small
$\eta>0,$ each $B_i(p)$ can be taken, for $\rho_i(p)<\eta,$ to be independent
of the distance to the boundary. For each $1\leq i\leq k$, we then smoothly
interpolate the  extended weight to the constant value $1,$ so that the
function $\rho_i(p)^{B_i(p)-1}$ is globally defined and positive on $P\setminus
H_i.$ That this is possible follows from the tubular neighborhood theorem for
manifolds with corners, Lemma 2.1.3 in~\cite{Epstein_Mazzeo_annmathstudies}.
Using these extended weights, $\{B_i:1\leq i\leq k\},$ we define a measure on the
manifold $P$ by setting
\begin{equation}
\label{eq:Weight_P}
d\mu := \prod_{i=1}^k \rho_i(p)^{B_i(p)-1}\, dV,
\end{equation}
where $dV$ is a smooth positive density on $P$. Note that, in adapted
coordinates $(x_1,\dots,x_n;y_1,\dots,y_m)$ near a boundary point of
co-dimension $n,$ this measure takes exactly the form given in~\eqref{eq:Weight}.

A different smooth choice,
$\{B'_i:1\leq i\leq k\}$, of the extension of the weights from the boundary
hypersurfaces, which are again locally independent of the distance to the
boundary of $P,$ and another smooth nondegenerate choice of a density $dV'$ on
$P$, generate a weighted measure $d\mu'$,
$$
d\mu' := \prod_{i=1}^k \rho_i(p)^{B'_i(p)-1}\, dV',
$$
which differs from $d\mu$ by a bounded, smooth, positive factor on $P$. Our
results are independent of these choices.
We say that a measurable function $u:P\rightarrow\RR$ belongs to $L^2(P; d\mu)$ if the norm
\begin{equation}
\label{eq:L_2_norm_P}
\|u\|^2_{L^2(P;d\mu)} := \int_{P} |u(p)|^2 \, d\mu(p) <\infty.
\end{equation}
We can now state

\begin{thm}[Global regularity of solutions]
\label{thm:Global_regularity}
Assume that $\cL$ is a second-order differential operator defined on a compact
manifold with corners $P$ such that when written in a local system of
coordinates it takes the form of the operator $L$ defined in
\eqref{eq:Operator} and it satisfies Assumption \ref{assump:Coeff}. Let $f \in
L^2(P; d\mu)$ and let $u$ be the unique weak solution to the parabolic
Dirichlet problem,
\begin{equation}
\label{eq:Initial_value_problem}
\begin{aligned}
\left\{\begin{array}{rl}
u_t-\cL u=0 & \hbox{ on } (0, \infty)\times P,\\ 
u(0) = f& \hbox{ on } P, 
\end{array} \right.
\end{aligned}
\end{equation}
Then we have that
\begin{equation}
\label{eq:Global_regularity}
u \in C^{\infty}((0,\infty)\times P).
\end{equation}
\end{thm}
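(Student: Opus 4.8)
The plan is to reduce the global statement to the local one, Theorem~\ref{thm:Boundary_reg}, via a partition-of-unity/localization argument on the compact manifold with corners $P$. First I would note that in the interior of $P$, away from the boundary, the operator $\cL$ is strictly elliptic (by \eqref{eq:Uniform_ellipticity}, the full principal symbol is uniformly positive definite), so classical parabolic interior regularity (e.g.\ the standard Schauder/$L^2$ theory for uniformly parabolic equations) immediately gives $u\in C^\infty((0,\infty)\times \Int P)$. Hence the only issue is smoothness up to and along the boundary strata of $P$.

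For a point $p\in\partial P$, I would choose an adapted coordinate chart $\psi\colon U\to V\subseteq\bar S_{n,m}$ with $\psi(p)=0$ in which, by \cite[Proposition 2.2.3]{Epstein_Mazzeo_annmathstudies}, $\cL$ takes the normal form \eqref{eq:Operator} satisfying \eqref{eq:Operator_adapted_system} and Assumption~\ref{assump:Coeff}; the cleanness condition \eqref{eq:Cleanness} is exactly the coordinate-invariant dichotomy between absorbing and reflecting faces, so the hypotheses of Theorem~\ref{thm:Boundary_reg} are met on a ball $\cB_R\subseteq V$. Next I would check that the global weak solution $u$ of \eqref{eq:Initial_value_problem}, when transported to these coordinates, is a \emph{local} weak solution of $u_t-Lu=0$ on $(0,\infty)\times\cB_R$ in the sense of \S\ref{sec:Weak_sol}; this is where one must verify that the global notion of weak solution (with its homogeneous Dirichlet conditions on the zero-weight faces, encoded in the energy space associated with $d\mu$ of \eqref{eq:Weight_P}) restricts to the local notion, and that $u\in L^2_{\loc}((0,\infty);L^2(\cB_R;d\mu))$ — the latter follows from $u(t,\cdot)\in L^2(P;d\mu)$ for a.e.\ $t>0$, which is part of the definition of the weak solution to \eqref{eq:Initial_value_problem} together with the energy estimate, and the compatibility of $d\mu$ in \eqref{eq:Weight_P} with $d\mu$ in \eqref{eq:Weight} noted just before the theorem. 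Applying Theorem~\ref{thm:Boundary_reg} then yields $u\in C^\infty((0,\infty)\times\bar\cB_r)$ for every $r<R$, hence $u$ is smooth up to the boundary near $p$.

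Finally, I would cover $P$ by finitely many such charts (using compactness of $P$) together with interior neighborhoods, so that $u$ is smooth in a neighborhood (in $(0,\infty)\times P$) of every point, and conclude $u\in C^\infty((0,\infty)\times P)$. The main obstacle is the second step: making precise that the \emph{global} weak solution restricts to a \emph{local} weak solution in the chart. One must be careful that (i) the boundary conditions match — a face of $P$ carrying a zero weight corresponds under $\psi$ to a hyperplane $\{x_i=0\}$ with $1\le i\le n_0$, and the Dirichlet condition there is built into the global energy space, so the restriction genuinely satisfies the local homogeneous Dirichlet condition; (ii) no spurious boundary terms are introduced on $\partial\cB_R\cap V$ because the local definition only tests against functions supported in $\cB_R$; and (iii) the weighted $L^2$ integrability is inherited locally, which again uses that $d\mu'$ and $d\mu$ differ by a bounded positive factor. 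Once this compatibility is established, the conclusion is purely a matter of patching, and requires no new estimates.
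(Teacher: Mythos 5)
Your proposal is correct and takes essentially the same approach as the paper: the paper's proof consists of the single line that the conclusion is an immediate consequence of Theorem~\ref{thm:Boundary_reg}, i.e.\ precisely the localization-to-charts argument you spell out. Your more careful verification that the global weak solution restricts to a local weak solution in each adapted chart is the content the paper implicitly leaves to the reader.
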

\begin{rmk}
As a corollary of this theorem we can show that the resolvent of the graph
closure of $\cL$ with respect to $C^0(P)$ has a compact resolvent.
\end{rmk}

A similar result was obtained in \cite[Theorem 9.1]{Hofrichter_Tran_Jost_2014a}
for the special case of the classical Kimura operator
\begin{equation}\label{eq:WF_operator}
  \cL_{\Kim}=\sum_{1\leq i,j\leq n}x_i(\delta_{ij}-x_j)\pa_{x_i}\pa_{x_j}
\end{equation}
  acting on functions defined on the $n$-simplex. In \cite[Theorem 9.1]{Hofrichter_Tran_Jost_2014a}, the authors prove the smoothness of solutions for positive time to the parabolic equation for the Wright-Fisher operator, but with initial data in $L^2(\Sigma_n)$, as opposed to $L^2(\Sigma_n;d\mu)$. 

  It is interesting to contrast Theorem \ref{thm:Global_regularity} with the
  boundary regularity of solutions to the elliptic Dirichlet problem. In
  \cite[Theorem 4.8]{Epstein_Wilkening_2015}, the authors study the elliptic
  \emph{non-homogeneous} Dirichlet problem for the classical Kimura operator,
  proving that solutions can have mild logarithmic singularities at the
  boundary, which are sums of terms of the form $(x_{i_1}+\ldots+x_{i_k}) \ln
  (x_{i_1}+\ldots+x_{i_k})$, for all $1\leq i_1<i_2<\ldots<i_k\leq n$ and for
  all $1\leq k\leq n$.

An important application of Theorem \ref{thm:Global_regularity} is that the Dirichlet heat kernel associated to Kimura operators that are tangent to all boundary components is smooth. We prove this result in 
\cite[Theorem 1.6]{Epstein_Pop_2015}.

We next state a boundary comparison estimate satisfied by nonnegative global solutions, which is an extension to the class of the degenerate Kimura operators of the corresponding result for strictly elliptic operators in divergence form \cite[Theorem 1.7]{Fabes_Garofalo_Salsa_1986}. 

\begin{thm}[Quotient bounds for global nonnegative solutions]
\label{thm:Quotient_bounds_global}
For $i=1,2$, let $u_i$ be global nonnegative weak solutions to $(\partial_t-\cL)u_i=0$ on $(0,T)\times P$. Assume that $u_2$ is a positive solution on $\hbox{int}(P)$. Then for all $r\in (0,1)$, there is a positive constant, $H=H(\fb,L,n,m,r)$, such that for all $4r^2<t<T-4r^2$ and for all $p\in \partial P$ we have that
\begin{equation}
\label{eq:Quotient_bounds_global}
\sup_{Q_r(t,p)} \frac{u_1}{u_2} \leq H \inf_{Q_r(t,p)} \frac{u_1}{u_2}.
\end{equation}
\end{thm}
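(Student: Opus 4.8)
The plan is to deduce the global quotient bound from three local ingredients --- the boundary Harnack inequalities of Theorem~\ref{thm:Carleson_Hopf_Oleinik}, the classical interior parabolic Harnack inequality, and a backward-in-time Harnack estimate which holds because $P$ is compact --- and then to glue these along a chain whose length is controlled by the geometry of $P$. By Theorem~\ref{thm:Global_regularity}, $u_1,u_2\in C^\infty((0,T)\times P)$; since $u_2>0$ on $\Int(P)$ the ratio $v:=u_1/u_2$ is smooth there. On every compact $K\subset\Int(P)$ and every compact time interval in $(0,T)$ the operator $\cL$ is uniformly parabolic, so $u_1$ and $u_2$ --- hence also $v$ --- obey the Krylov--Safonov parabolic Harnack inequality on $K$, with a constant independent of the base time since $\cL$ has time-independent coefficients. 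Near $\partial P$ this is replaced by Theorem~\ref{thm:Carleson_Hopf_Oleinik}, applied to the renormalized solutions $\tilde u_i:=w^Tu_i=u_i/h$, with $h$ the weight introduced below; these are continuous up to $\partial P$ because $u_i$ is smooth up to $\partial P$ and vanishes on the zero-weight faces, so $u_i=h\cdot(\text{smooth})$ there (see~\eqref{eq:Sup_est_sol_1_boundary}). Parts (iii)--(iv) of that theorem give, on any boundary cylinder $Q_\rho(s,z)$ contained in an adapted chart,
\[
\sup_{Q_\rho(s,z)}v\ \le\ C\,\frac{u_1(s+\rho^2,A_\rho(z))}{u_2(s-2\rho^2,A_\rho(z))},\qquad
\inf_{Q_\rho(s,z)}v\ \ge\ C^{-1}\,\frac{u_1(s-2\rho^2,A_\rho(z))}{u_2(s+\rho^2,A_\rho(z))},
\]
and the corkscrew point $A_\rho(z)$ lies at intrinsic distance $\asymp\rho$ from $\partial P$, in the range of the interior inequality.

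First I would upgrade these to a global forward Harnack inequality: chaining the interior Harnack inequality for $\tilde u_i$ along a fixed finite $\rho_0$-net of $\Int(P)$ and closing up to $\partial P$ with the Carleson and Hopf--Oleinik estimates (Theorem~\ref{thm:Carleson_Hopf_Oleinik}(i)--(ii)) yields, for $0<\delta\le s_-<s_+\le s_-+D<T$,
\[
\sup_{P}\tilde u_i(s_-,\cdot)\ \le\ C_1\,\inf_{P}\tilde u_i(s_+,\cdot),\qquad i=1,2,
\]
with $C_1=C_1(\delta,D,\cL,P)$; since $P$ is compact the number of links, hence $C_1$, is finite. This is the ``forward'' half of the estimate.

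The decisive and hardest step is the backward-in-time estimate: neither $u_i$ nor $v$ controls its value at a later time by its value at an earlier one --- this fails already on $\RR^n$ --- and only compactness of $P$ makes it true. I would obtain it from the conjugation property of generalized Kimura operators (Doob's $h$-transform) with the weight
\[
h:=\prod_{i\,:\,B_i\restrictedto_{H_i}=0}\rho_i,
\]
smoothly interpolated to $1$ away from the zero-weight hypersurfaces, so that $h>0$ on $\Int(P)$, $h=0$ exactly on the Dirichlet part of $\partial P$, and (by a direct computation from \eqref{eq:Weight_b_i} and the construction of the weight measure in \S\ref{sec:Global_regularity}) $Lh=g\,h$ with $g$ bounded on $P$. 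Then $\tilde u_i=u_i/h$ evolves under the semigroup $e^{sL_h}$ of the conjugated operator $L_h:=h^{-1}\circ L\circ h$, which is again of the local form \eqref{eq:Operator} but with all weights strictly positive (each formerly zero weight shifted up by $2$), and hence order-preserving and subject to the comparison principle; since $L_h\mathbf 1=g$, comparing with $s\mapsto e^{s\|g^+\|_\infty}$ gives $e^{sL_h}\mathbf 1\le e^{s\|g^+\|_\infty}\mathbf 1$. Therefore, for $s_2>s_1>r^2$ with $s_2-s_1\le 3r^2$,
\[
\tilde u_i(s_2,q)=\bigl(e^{(s_2-s_1+r^2)L_h}\,\tilde u_i(s_1-r^2,\cdot)\bigr)(q)\ \le\ e^{4r^2\|g^+\|_\infty}\,\sup_{P}\tilde u_i(s_1-r^2,\cdot),
\]
and the global forward Harnack inequality applied between times $s_1-r^2$ and $s_1$ bounds the right-hand side by $C_1\inf_P\tilde u_i(s_1,\cdot)\le C_1\,\tilde u_i(s_1,q')$; as $h$ cancels in the ratio and $t>4r^2$ permits the choice $s_2=t+r^2$, $s_1=t-2r^2$, we conclude that $u_i(t+r^2,A)\le C_3(r,\cL,P)\,u_i(t-2r^2,A)$ for $i=1,2$ at every corkscrew point $A$.

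Finally I would assemble the pieces. Fix $p\in\partial P$ and $r\in(0,1)$, and cover $\overline{B_r(p)}$ by a family of balls $B_{\rho_0}(q_j)$ --- $q_j\in\partial P$ or $q_j\in\Int(P)$ --- of a fixed small radius $\rho_0$, each inside one adapted chart; by compactness their number is bounded in terms of $r$ and $P$. On an interior ball the interior Harnack inequality for $u_1$ and $u_2$ bounds $v$ above and below over the associated parabolic cylinder by bounded multiples of $v$ at a single later-time point; on a boundary ball the displayed estimates of the first paragraph do the same in terms of $u_1/u_2$ at the corkscrew point $A_{\rho_0}(q_j)$, evaluated at times within $O(\rho_0^2)$ of $t$. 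Using the backward-in-time estimate of the previous paragraph to trade those later-time values for earlier-time ones, and the forward and backward Harnack inequalities to pass between distinct corkscrew points, one sees that every $\sup$ or $\inf$ of $v$ over an individual cylinder is comparable --- with a constant depending only on $r$, $\cL$, $P$ --- to a single reference number, say $v$ at a fixed interior point at time $t-r^2/2$. Overlapping the cylinders and multiplying the finitely many comparison constants yields $\sup_{Q_r(t,p)}u_1/u_2\le H\,\inf_{Q_r(t,p)}u_1/u_2$ with $H=H(\fb,L,n,m,r)$, as asserted.
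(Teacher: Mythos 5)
Your proposal is correct and reproduces the essential logical skeleton of the paper's argument: conjugate via $\widetilde u = w^T u$ to a generalized Kimura operator with strictly positive weights, establish a two-sided ``elliptic-type'' Harnack inequality on all of $(t-r^2,t)\times P$ for $\widetilde u$ by combining forward-in-time Harnack estimates with a maximum-principle backward estimate, and then divide two such inequalities to bound $\sup u_1/u_2$ against $\inf u_1/u_2$. The paper isolates this intermediate step as Lemma~\ref{lem:Elliptic_Harnack}, which it derives by chaining $\sup_{P_r}\widetilde u\leq H\inf_{P^+_r}\widetilde u$, the cited maximum principle $\widetilde u\leq C\sup_{P^-_r}\widetilde u$, and $\sup_{P^-_r}\widetilde u\leq H\inf_{P_r}\widetilde u$; Theorem~\ref{thm:Quotient_bounds_global} then follows in three lines.

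Two presentational differences are worth flagging. First, for the forward Harnack chain you patch the interior Krylov--Safonov inequality on compact subsets of $\Int(P)$ to the boundary estimates of Theorem~\ref{thm:Carleson_Hopf_Oleinik}. This works, but it is unnecessary complexity: once one has passed to $\widetilde u$, the operator $\widetilde L$ in \eqref{eq:tilde_L} has \emph{all} weights positive, and the Harnack inequalities of \cite[Theorem~1.2]{Epstein_Pop_2013b} and \cite[Theorem~4.1]{Epstein_Mazzeo_2016} apply uniformly on cylinders reaching $\partial P$; the paper therefore simply invokes these together with a finite cover of the compact $P$, avoiding any degeneration of the interior constant near the boundary and any matching across the interior/boundary seam. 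Second, your derivation of the backward-in-time estimate from the positivity of the semigroup $e^{sL_h}$, the identity $Lh=gh$ with $g$ bounded, and the comparison $e^{sL_h}\mathbf 1\leq e^{s\|g^+\|_\infty}\mathbf 1$ is correct and is a nice self-contained rederivation, but the paper obtains the same conclusion directly from the maximum principle of \cite{Epstein_Mazzeo_annmathstudies} applied to $\widetilde u$. These are not gaps; your route and the paper's are logically equivalent, the paper's being a bit shorter because it leans on citations for the two auxiliary facts that you rederive.
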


\subsection{Applications}
\label{sec:Applications}

Generalized Kimura operators arise in population genetics as a model for the
evolution of gene frequencies as diffusion processes, \cite{Fisher_1930,
  Haldane_1932, Wright_1931, Kimura_1964, hartl1997principles,
  Epstein_Mazzeo_annmathstudies}. To describe the statistical properties of
such processes, we need to gain a good understanding of their transition
probabilities and of the hitting distributions on suitable portions of the
boundary of the support of the process. Such questions are equivalent to the
understanding of the fundamental solution and of the caloric measure associated
to the parabolic problem that we study in this article. We carry out this
analysis in \cite[Theorem 1.10]{Epstein_Pop_2015}, where one of our main
arguments to establish the structure of the transition probabilities of Kimura
diffusions (fundamental solution) relies on the pointwise boundary estimates in
Theorem \ref{thm:Boundary_reg}. In addition, the boundary Harnack principles
established in our present article allow us to prove in \cite[Theorem
5.4]{Epstein_Pop_2015} the doubling property of the hitting distributions of
Kimura diffusions (caloric measure).

Finally, we remark that the class of processes described by generalized Kimura operators appear not only in population genetics, 
but they are also encountered in the study of superprocesses, \cite{Athreya_Barlow_Bass_Perkins_2002, Bass_Perkins_2003}, of Fleming-Viot processes in population dynamics, \cite{Cerrai_Clement_2001, Cerrai_Clement_2003, Cerrai_Clement_2004, Cerrai_Clement_2007}, and 
are closely related to the linearization of the porous medium equation, \cite{DaskalHamilton1998, Koch}, to affine models for interest rates, \cite{Cuchiero_Filipovic_Mayerhofer_Teichmann_2011, DuffiePanSingleton2000}, and to stochastic volatility models in mathematical finance, \cite{Heston1993, Feehan_Pop_regularityweaksoln, Feehan_Pop_mimickingdegen_pde, Feehan_Pop_elliptichestonschauder, Feehan_Pop_higherregularityweaksoln, Feehan_Pop_mimickingdegen_probability,  Feehan_Pop_stochrepdirichlet}.

\section{Weak solutions to the parabolic Dirichlet problem}
\label{sec:Weak_sol}

In this section we review from \cite[\S 3.1 and \S 3.2]{Epstein_Pop_2015} the notion of weak solutions to the parabolic Dirichlet problems introduced in \S\ref{sec:Introduction}, and we recall an energy estimate satisfied by such solutions. In \cite[\S 3.1]{Epstein_Pop_2015}, we prove that we can associate a Dirichlet form to the operator $\cL$ defined on a compact manifold $P$ with corners as introduced in \S \ref{sec:Global_regularity}, which is defined by
$$
Q(u,v) := - (\cL u, v)_{L^2(P;d\mu)},\quad\forall\, u,v \in C^{\infty}_c(\hbox{int}(P)).
$$ 
The bilinear form $Q(u,v)$ can be decomposed as
\begin{equation}
\label{eq:Bilinear_form}
Q(u,v) := Q_{\hbox{\tiny{sym}}}(u,v) + (Vu, v)_{L^2(P; d\mu)} + (cu, v)_{L^2(P; d\mu)},\quad\forall\, u,v \in C^{\infty}_c(\hbox{int}(P)),
\end{equation}
where $c$ is a smooth and bounded zeroth order term on $P$,
$Q_{\hbox{\tiny{sym}}}(u,v)$ is a symmetric bilinear form, and $V$ is a
vector field on the manifold $P$ that is tangent to $\partial P,$
whose coefficients might have mild logarithmic singularities. When written in a
local system of coordinates on a neighborhood $\cB_R$ of the origin in $\bar
S_{n,m}$, the symmetric bilinear form $Q_{\hbox{\tiny{sym}}}(u,v)$ takes the
form
\begin{equation}
\label{eq:Q_sym}
\begin{aligned}
Q_{\hbox{\tiny{sym}}} (u,v) 
&= \int_{\cB_R}\left(\sum_{i=1}^nx_iu_{x_i}v_{x_i} + \sum_{i,j=1}^n \frac{1}{2}x_ix_ja_{ij}(u_{x_i}v_{x_j} + u_{x_j}v_{x_i})\right)\, d\mu\\
&\quad + \int_{\cB_R}\left(\sum_{i=1}^n\sum_{l=1}^m \frac{1}{2}x_ic_{il}(u_{x_i}v_{y_l} + u_{y_l}v_{x_i}) 
+ \sum_{l,k=1}^m \frac{1}{2}d_{lk}(u_{y_l}v_{y_k} + u_{y_k}v_{y_l})\right)\, d\mu,
\end{aligned}
\end{equation}
for all $u,v\in C^1_c(\cB_R)$, and $d\mu$ is the weighted measure in
\eqref{eq:Weight}. 

The boundary is divided into two subsets:
\begin{equation}
\label{eq:Tangent_boundary}
\partial^T P := \cup_{i=1}^k\{\bar H_i : b_i\restrictedto_{H_i} = 0\}\text{ and
}
\partial^{\pitchfork}P=\partial P\setminus \partial^T P.
\end{equation} 
Because the weights corresponding to $\partial^TP$ are constant, the vector
field $V$ satisfies
\begin{equation}
\label{eq:V}
\begin{aligned}
V &= 
\sum_{i=1}^n x_i 
\left(\alpha_i(z) +\sum_{k=n_0+1}^n\left(\sum_{j=1}^n\gamma_{ikj}(z) \partial_{x_j}b_k + \sum_{l=1}^m\nu_{ikl}(z) \partial_{y_l}b_k\right)\ln x_k\right)\partial_{x_i}\\
&\quad+\sum_{l,l'=1}^m\sum_{k=n_0+1}^n \beta_{ll'k}(z)\partial_{y_{l'}} b_k \ln x_k\partial_{y_l} ,
\end{aligned}
\end{equation}
where the functions $\alpha_i,\beta_{ll'k},\gamma_{ikj}, \nu_{ikl}:\bar
\cB_R\rightarrow\RR$ are smooth. 

We define the space of functions $H^1(P;d\mu)$ to be the closure of
$C^{\infty}_c(P\backslash \partial^T P)$ with respect to the norm:
\begin{equation}
\label{eq:H_1_norm}
\|u\|^2_{H^1(P;d\mu)} := Q_{\hbox{\tiny{sym}}} (u, u) + \|u\|^2_{L^2(P;d\mu)}.
\end{equation}
We prove in \cite[Lemma 3.1]{Epstein_Pop_2015} that the bilinear form $Q(u,v)$ is continuous and satisfies the G\r{a}rding inequality, i.e.
\begin{align}
\label{eq:Continuity_Dirichlet_form}
|Q(u,v)| &\leq c_1 \|u\|_{H^1(P;d\mu)} \|v\|_{H^1(P;d\mu)},\\
\label{eq:Coercivity_Dirichlet_form}
Q(u,u) & \geq c_2 \|u\|_{H^1(P;d\mu)}^2 - c_3 \|u\|_{L^2(P;d\mu)}^2,
\end{align}
where $c_1$, $c_2$, and $c_3$ are positive constants depending only on the coefficients of the operator $L$. The preceding properties imply that the hypotheses of \cite[Chapter 3, Section 4, Theorem 4.1 and Remark 4.3]{Lions_Magenes1} are satisfied, and so given $g\in L^2((0,T); L^2(P;d\mu))$ and $f\in L^2(P;d\mu)$, there is a unique weak solution to the parabolic Dirichlet problem,
\begin{equation}
\label{eq:Parabolic_Dirichlet_problem}
\begin{aligned}
\left\{\begin{array}{rl}
u_t-\cL u=g & \hbox{ on } (0, T)\times P,\\ 
u(0) = f& \hbox{ on } P. 
\end{array} \right.
\end{aligned}
\end{equation}
We next recall the definition \emph{global weak solutions} to the parabolic problem \eqref{eq:Parabolic_Dirichlet_problem}: 

\begin{defn}[Global weak solution]
\label{defn:Weak_sol_global}
Let $g\in L^2((0,T); L^2(P;d\mu))$ and $f\in L^2(P;d\mu)$. A function $u$ is a global weak solution to equation \eqref{eq:Parabolic_Dirichlet_problem} if it belongs to the space $\cF((0,T)\times P)$, i.e. 
$$
u \in L^2((0,T); H^1(P;d\mu))
\quad\hbox{ and }\quad
\frac{du}{dt} \in L^2((0,T); H^{-1}(P;d\mu)),
$$
where we denote by $H^{-1}(P;d\mu)$ the dual space of $H^1(P;d\mu)$, and the following hold:
\begin{enumerate}
\item[1.] For all test functions $v\in \cF((0,T)\times P)$, we have that
\begin{equation}
\label{eq:Weak_sol_var_eq}
\int_0^T\left\langle \frac{du(t)}{dt}, v(t)\right\rangle\, dt+\int_0^T Q(u(t),v(t))\, dt= \int_0^T \left(g(t), v(t)\right)_{L^2(P;d\mu)}\, dt,
\end{equation}
where $\left\langle\cdot, \cdot\right\rangle$ denotes the dual pairing of $H^{-1}(P;d\mu)$ and $H^1(P;d\mu)$.
\item[2.] The initial condition is satisfied in the $L^2(P;d\mu)$-sense, that is
\begin{equation}
\label{eq:Weak_sol_initial_cond}
\|u(t)-f\|_{L^2(P;d\mu)}\rightarrow 0,\quad\hbox{as } t\downarrow 0.
\end{equation}
\end{enumerate}
\end{defn}

We have the following remarks about the boundary conditions satisfied by weak solutions to equation 
\eqref{eq:Parabolic_Dirichlet_problem}:

\begin{rmk}[Homogeneous Dirichlet boundary condition along $(0,T)\times\partial^TP$]
\label{rmk:Boundary_cond}
The Dirichlet boundary condition,
\begin{equation}
\label{eq:Boundary_cont}
u = 0 \quad\hbox{ on } (0,T)\times\partial^T P,
\end{equation}
is encoded into the definition of the weighted Sobolev space $H^1(P;d\mu).$ Of
course, functions $v$ in $C^{\infty}_c(P\backslash\partial^T P)$ automatically
satisfy the boundary condition that $v=0$ on $\partial^T P$. From definition
\eqref{eq:Tangent_boundary} of $\partial^T P$, we see that $\partial^T P$
consists of the union of the closed boundary hypersurfaces $H_i$ such that
$B_i\restrictedto_{H_i}=0$. We fix such a boundary hypersurface, $H_i$, and a
relatively open neighborhood, $U\subseteq P$, of a point in
$\hbox{int}(H_i)$. We notice that, when written in local coordinates on $U$,
the operator $\cL$ takes the form of the operator $L$ defined in
\eqref{eq:Operator}, with $n_0=n=1$ and $m=N-1$. Applying Theorem
\ref{thm:Boundary_reg}, we obtain that $u\in C^{\infty}((0,T)\times U)$, and
estimate \eqref{eq:Sup_est_sol_1_boundary} (applied with $\fb=0$) implies that
$u=0$ on $(0,T)\times (U\cap\partial^T P)$. Thus, the homogeneous Dirichlet
boundary condition, $u=0$ on $(0,T)\times \hbox{int}(H_i)$, is satisfied in a
classical sense for all boundary hypersurfaces $H_i$ to which the operator
$\cL$ is tangent. Moreover, by Theorem \ref{thm:Global_regularity} we know that
$u$ is continuous up to the boundary, and so the homogeneous boundary condition
\eqref{eq:Boundary_cont} is satisfied.
\end{rmk}

\begin{rmk}[Boundary condition along $(0,T)\times\partial^{\pitchfork}P$]
\label{rmk:Boundary_cond_transverse}
Along the parabolic portion of the boundary
$(0,T)\times\partial^{\pitchfork}P$, we impose \emph{no boundary condition} in
Definition \ref{defn:Weak_sol_global} of weak solutions. Even so, uniqueness of
weak solutions to the initial-value problem
\eqref{eq:Parabolic_Dirichlet_problem} is not lost because the bilinear form
$Q(u,v)$ satisfies the G\r{a}rding inequality
\eqref{eq:Coercivity_Dirichlet_form}, for all $u,v\in H^1(P;d\mu)$.  Our
motivation to impose no boundary conditions along
$(0,T)\times\partial^{\pitchfork}P$ is because we apply the results in our
article to the characterization of the transition probabilities of generalized
Kimura processes, and an imposition of a boundary condition along
$(0,T)\times\partial^{\pitchfork}P$ would result in altering the natural
boundary behavior of the underlying Kimura process, as is described in
\cite{Epstein_Pop_2015}. In~\cite{Epstein_Mazzeo_annmathstudies} it is shown
that this natural boundary condition can be understood as imposing a regularity
requirement along these boundary components. Similar problems without boundary
conditions on suitable portions of the boundary have been studied in
\cite{DaskalHamilton1998, Koch, Feehan_Pop_mimickingdegen_pde,
  Feehan_Pop_regularityweaksoln, Feehan_Pop_elliptichestonschauder,
  Feehan_maximumprinciple}, among others.
\end{rmk}

In addition \cite[Chapter 4, Section 1, Theorem 1.1]{Lions_Magenes1} implies that the unique weak solution to the parabolic Dirichlet problem \eqref{eq:Parabolic_Dirichlet_problem} satisfies the energy estimate:
\begin{equation}
\label{eq:Energy_estimate}
\begin{aligned}
&\sup_{t\in [0,T]}\|u(t)\|_{L^2(P;d\mu)} + \|u\|_{L^2((0,T);H^1(P;d\mu))} +\left\|\frac{du}{dt}\right\|_{L^2((0,T);H^{-1}(P;d\mu))}\\
&\quad\leq C\left(\|f\|_{L^2(P;d\mu)} + \|g\|_{L^2((0,T);L^2(P;d\mu))}\right),
\end{aligned}
\end{equation}
where $C=C(T, c_1, c_2, c_3)$ is a positive constant and $c_1, c_2, c_3$ are the constants appearing in \eqref{eq:Continuity_Dirichlet_form} and \eqref{eq:Coercivity_Dirichlet_form}. The remark following \cite[Chapter 3, Section 4, Theorem 4.1]{Lions_Magenes1} gives us that $u \in C([0,T];L^2(P;d\mu))$, i.e. 
$$
\|u(t)-u(s)\|_{L^2(P;d\mu)}\rightarrow 0,\quad\hbox{ as } t\rightarrow s,\quad t,s\in[0,T].
$$
In our article, we also use the notion of \emph{local} weak solution, which we define next. For all open sets $\Omega\subset \cB_2\subset S_{n,m}$, we denote
\begin{align*}
\partial^T\Omega:=\cap_{i=1}^{n_0}\{x_i=0\}\cap\partial\Omega,
\quad
\partial^{\pitchfork}\Omega:=\cap_{j=n_0+1}^n\{x_j=0\}\cap\partial\Omega,
\quad
\underline{\Omega} :=\Omega\cup\partial^{\pitchfork}\Omega,
\end{align*}  
and we let $H^1(\Omega;d\mu)$ be closure of $C^{\infty}_c(\underline\Omega)$ with respect to the norm
$$
\|u\|_{H^1(\Omega;d\mu)}:=\int_{\Omega}\left(\sum_{i=1}^n x_i|u_{x_i}|^2 + \sum_{l=1}^m|u_{y_l}|^2 + |u|^2\right)\, d\mu.
$$
As usual $H^{-1}(\Omega;d\mu)$ denotes the dual space of $H^1(\Omega;d\mu)$. Let $Q(u,v)$ be the Dirichlet form defined by \eqref{eq:Bilinear_form} with $P$ replaced by $\Omega$, and let $Q_{\hbox{\tiny{sym}}}(u,v)$ be the Dirichlet form defined by \eqref{eq:Q_sym} with $\cB_R$ replaced by $\Omega$. We can now introduce

\begin{defn}[Local weak solutions]
\label{defn:Weak_sol_local}
(a) Let $T>0$, $g\in L^2((0,T); L^2(\Omega;d\mu))$, and $f\in L^2(\Omega;d\mu)$. A function $u$ is a local weak solution to initial-value problem,
\begin{equation}
\label{eq:Parabolic_Dirichlet_problem_local}
\begin{aligned}
\left\{\begin{array}{rl}
u_t-L u=g & \hbox{ on } (0,T)\times \Omega,\\ 
u(0) = f& \hbox{ on } \Omega, 
\end{array} \right.
\end{aligned}
\end{equation}
if the following hold:
\begin{enumerate}
\item[1.] For all $\varphi\in C^1_c([0,T]\times\underline{\Omega})$, we have that
$$
u\varphi \in L^2((0,T); H^1(\Omega;d\mu))
\quad\hbox{ and }\quad
\frac{d(u\varphi)}{dt} \in L^2((0,T); H^{-1}(\Omega;d\mu)).
$$
\item[2.] Equality \eqref{eq:Weak_sol_var_eq} holds for all test functions $v$ satisfying $v\in L^2((0,T);H^1(\Omega;d\mu))$, $dv/dt\in L^2((0,T); H^{-1}(\Omega;d\mu))$, and $\hbox{supp}(v)\subset [0,T]\times\underline\Omega$.
\item[3.] Property \eqref{eq:Weak_sol_initial_cond} holds with $u$ and $f$ replaced by $u\varphi$ and $f\varphi$, respectively, for all $\varphi\in C^1_c(\underline\Omega)$. 
\end{enumerate}
(b) Let $0<t_1,t_2$ and $g\in L^2((t_1,t_2); L^2(\Omega;d\mu))$. A function $u$ is a local weak solution to equation,
\begin{equation}
\label{eq:Parabolic_Dirichlet_problem_local_int}
u_t-L u=g  \hbox{ on } (t_1,t_2)\times \Omega,\\ 
\end{equation}
if the following hold:
\begin{enumerate}
\item[1.] For all $\varphi\in C^1_c((t_1,t_2]\times\underline{\Omega})$, we have that
$$
u\varphi \in L^2((t_1,t_2); H^1(\Omega;d\mu))
\quad\hbox{ and }\quad
\frac{d(u\varphi)}{dt} \in L^2((t_1,t_2); H^{-1}(\Omega;d\mu)).
$$
\item[2.] Equality \eqref{eq:Weak_sol_var_eq} holds for all test functions $v$ satisfying $v\in L^2((t_1,t_2);H^1(\Omega;d\mu))$, $dv/dt\in L^2((t_1,t_2); H^{-1}(\Omega;d\mu))$, and $\hbox{supp}(v)\subset (t_1,t_2]\times\underline\Omega$.
\end{enumerate}
\end{defn}

\section{Weighted Sobolev estimates}
\label{sec:Weighted_Sobolev_estimates}

In this section we consider the operator $L$ defined in \eqref{eq:Operator} that acts on functions defined on $S_{n,m}$. Our main result is estimate \eqref{eq:Regularity_Sobolev} in which we prove higher-order local regularity in weighted Sobolev spaces of solutions to the initial-value problem \eqref{eq:Initial_value_problem_local}. Estimate \eqref{eq:Regularity_Sobolev} plays a central role in \S \ref{sec:Supremum_estimates} in the proof of the supremum estimates of the derivatives of solutions to the initial-value problem \eqref{eq:Initial_value_problem_local} in terms of the weighted $L^2$-norm of the initial data; see Theorem \ref{thm:Sup_est}.

We remark that estimate \eqref{eq:Regularity_Sobolev} cannot be obtained by standard methods available in the literature, such as finite-difference arguments, because the boundary of the domain $S_{n,m}$ is non-smooth.
To overcome this difficulty, we prove in Lemma \ref{lem:Approx_smooth} an approximation property of weak solutions with smooth functions. The latter is a property that is not in generally true for strictly elliptic operators defined on non-smooth domains. This observation allows us to take derivatives in the equations satisfied by the approximating sequence of smooth solutions. And so our method of the proof consists of first proving the estimates under the stronger assumption that the local solution is smooth and then using the approximation procedure to derive the a priori local Sobolev estimate \eqref{eq:Regularity_Sobolev} in its full generality. 

For $\fa=(\fa_1,\ldots,\fa_n) \in \NN^n$, we introduce the weighted measure,
\begin{equation}
\label{eq:Weight_for_higher_order_Sobolev_spaces}
d\mu_{\fa}(z) := \prod_{i=1}^n x_i^{b_i(z)+\fa_i-1}\, dx_i\prod_{l=1}^mdy_l,\quad\forall\, z=(x,y)\in S_{n,m},
\end{equation}
which is suitable to measure the regularity of the derivatives
$D^{\fa}_xD^{\fb}_yu$ of the weak solutions $u$ to the initial-value problem
\eqref{eq:Initial_value_problem}, for all $\fb\in\NN^m$.   We can now state:

\begin{thm}[Regularity in weighted Sobolev spaces]
\label{thm:Regularity_Sobolev}
Assume that the operator $L$ in \eqref{eq:Operator} satisfies \eqref{eq:Operator_adapted_system} and Assumption \ref{assump:Coeff}. Let $T>0$, $R\in (0,1)$, $f\in L^2(\cB_R; d\mu)$, and let $u$ be a local weak solution to the initial-value problem,
\begin{equation}
\label{eq:Initial_value_problem_local}
\begin{aligned}
\left\{\begin{array}{rl}
u_t-Lu=0 & \hbox{ on } (0, T)\times \cB_R,\\ 
u(0) = f& \hbox{ on } \cB_R.
\end{array} \right.
\end{aligned}
\end{equation}
Then for all $(\fa,\fb)\in \NN^n\times\NN^m$, $0<r<R$, and $0<t<T$, we have that
$$
D^{\fa,\fb}u \in L^{\infty}([t,T], L^2(\cB_r;d\mu_{\fa})),
$$
and there is a positive constant, $C=C(\fa,\fb,L,r,R,t,T)$, such that
\begin{equation}
\label{eq:Regularity_Sobolev}
\sup_{s\in [t,T]} \|D^{\fa}_xD^{\fb}_yu(s)\|^2_{L^2(\cB_r;d\mu_{\fa})} 
\leq C \left(\|f\|^2_{L^2(\cB_R;d\mu)} + \|u\|^2_{L^2((0,T);L^2(\cB_R;d\mu))}\right).
\end{equation}
\end{thm}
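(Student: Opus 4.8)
The plan is to reduce at once to a \emph{smooth} local weak solution and then to propagate regularity from the interior up to the boundary by differentiating the equation and running weighted energy estimates. Because $\cB_R$ is a manifold with corners and $d\mu$ is singular and non-doubling when $n_0>0$, one cannot control difference quotients of an arbitrary weak solution; so I would first invoke Lemma~\ref{lem:Approx_smooth} to replace $u$ by smooth solutions of the same equation with smooth data converging to $u$, establish \eqref{eq:Regularity_Sobolev} with a constant depending only on the coefficients of $L$ (not on the smoothness of the solution), and pass to the limit. Henceforth $u$ is smooth on $[0,T]\times\bar\cB_R$. The argument is then an induction with outer parameter $p=|\fa|$ and inner parameter $q=|\fb|$; the $t$-derivatives $D^{\fc}_t$ are carried in the same induction since $\partial_t^{\fc}u$ solves the same equation. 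I would prove the slightly stronger inductive statement that, for all $0<t<T$ and $0<r<R$,
\begin{equation*}
\sup_{s\in[t,T]}\|D^{\fa}_xD^{\fb}_yu(s)\|^2_{L^2(\cB_r;d\mu_{\fa})}+\int_t^T\|D^{\fa}_xD^{\fb}_yu(s)\|^2_{H^1(\cB_r;d\mu_{\fa})}\,ds\le C\left(\|f\|^2_{L^2(\cB_R;d\mu)}+\|u\|^2_{L^2((0,T);L^2(\cB_R;d\mu))}\right),
\end{equation*}
the $H^1$-term being indispensable for the inductive step. The base case $p=q=0$ is a Caccioppoli estimate (test the weak formulation with $\varphi^2u$ for a cutoff $\varphi$) combined with the energy estimate \eqref{eq:Energy_estimate} on a slightly larger ball; no time-cutoff is needed since $u\in C([0,T];L^2(\cB_R;d\mu))$.

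For the inductive step let $w=D^{\fa}_xD^{\fb}_yu$. Differentiating $u_t-Lu=0$ and using that, in the normal form \eqref{eq:Operator_adapted_system},
$\partial_{x_i}^{\fa_i}\big(x_i\partial_{x_i}^2+b_i\partial_{x_i}\big)=\big(x_i\partial_{x_i}^2+(b_i+\fa_i)\partial_{x_i}\big)\partial_{x_i}^{\fa_i}+(\text{lower order})$, one finds that $w$ is a weak solution of $w_t-\wt L_{\fa}w=H_{\fa,\fb}$ on $(0,T)\times\cB_R$, where $\wt L_{\fa}$ is again a generalized Kimura operator satisfying Assumption~\ref{assump:Coeff}, with weights $b_i+\fa_i$ and with $d\mu_{\fa}$ as its natural measure. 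Indeed $\wt L_{\fa}$ is obtained from $L$ by shifting the weights and absorbing those commutator terms that carry a matching degenerate factor: adding $x_i\cdot(\text{bounded})\cdot\partial_{x_i}$ leaves $b_i\restrictedto_{\{x_i=0\}}$ unchanged, and adding $x_i\cdot(\text{bounded})\cdot\partial_{x_i}\partial_{\ast}$ or $(\text{bounded})\cdot\partial_{y_l}\partial_{y_k}$ preserves the Kimura form; hence the Dirichlet-form machinery of \S\ref{sec:Weak_sol} — the G\r{a}rding inequality \eqref{eq:Coercivity_Dirichlet_form} and (the local analogue of) the energy estimate \eqref{eq:Energy_estimate} — is available for $\wt L_{\fa}$ on balls. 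The crux is then that $H_{\fa,\fb}$ is controlled in $L^2((t_0,T);L^2(\cB_{r'};d\mu_{\fa}))$ by the right-hand side of \eqref{eq:Regularity_Sobolev}, for any $0<t_0<T$ and $r<r'<R$. This uses four ingredients: the cleanness condition \eqref{eq:Cleanness}, which for $1\le i\le n_0$ forces $b_i=x_i\tilde b_i$ with $\tilde b_i$ smooth, so every coefficient $\partial b_i$ carries a factor $x_i$; the degenerate structure of \eqref{eq:Operator}, in which every second-order $x$-term has a factor $x_j$; the equation itself, used to rewrite a "bad" $x_i\partial_{x_i}^2$ acting on a lower-order derivative of $u$ as a $t$-derivative plus lower-order terms; and the identity $x_j\,d\mu_{\fa'}=d\mu_{\fa'+\fe_j}$, which converts the $H^1(d\mu_{\fa'})$-part of the inductive hypothesis into $L^2$-control of one further $x_j$-derivative. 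After these reductions every term of $H_{\fa,\fb}$ is of the form $g\,D^{\fa'}_xD^{\fb'}_yu$ with $g$ smooth and bounded on $\bar\cB_R$ and $(|\fa'|,|\fb'|)$ strictly below $(p,q)$ in lexicographic order, hence controlled by the inductive hypothesis.

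It remains to localize. Choose $\varphi\in C^\infty_c(\cB_{r'})$ with $\varphi\equiv1$ on $\cB_r$, and $\chi\in C^\infty([0,T])$ with $\chi\equiv0$ on $[0,t_0]$ and $\chi\equiv1$ on $[t,T]$, where $0<t_0<t$. Then $\varphi\chi w$ is a genuine weak solution for $\wt L_{\fa}$ on $\cB_{r'}$ with zero initial datum, of an equation whose source is $\varphi\chi H_{\fa,\fb}+\chi[\wt L_{\fa},\varphi]w+\chi'\varphi w$. The commutator $[\wt L_{\fa},\varphi]w$ is first order in $w$, with coefficients supported in $\cB_{r'}\setminus\cB_r$ that absorb the degeneracy ($x_j\partial_{x_j}\varphi$, $\partial_{y_l}\varphi$), hence is controlled in $L^2((0,T);L^2(d\mu_{\fa}))$ by $\|w\|_{L^2((0,T);H^1(\cB_{r'};d\mu_{\fa}))}$; the term $\chi'\varphi w$, supported in $[t_0,t]\times\cB_{r'}$, is controlled in $L^2((0,T);L^2(d\mu_{\fa}))$ by the inductive hypothesis at orders $(p-1,q)$ and $(p,q-1)$, whose $H^1$-parts yield $L^2((t_0,T);L^2(d\mu_{\fa}))$-control of $w=\partial_{x_j}(D^{\fa-\fe_j}_xD^{\fb}_yu)=\partial_{y_l}(D^{\fa}_xD^{\fb-\fe_l}_yu)$. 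Applying the energy estimate \eqref{eq:Energy_estimate} for $\wt L_{\fa}$ to $\varphi\chi w$ and restricting to $[t,T]\times\cB_r$ (where $\varphi\chi w=w$) gives the inductive statement, and hence \eqref{eq:Regularity_Sobolev}.

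I expect the two main obstacles to be: (i) Lemma~\ref{lem:Approx_smooth} itself — approximating a weak solution by smooth solutions near a corner, where neither reflection nor ordinary mollification is available and $d\mu$ has infinite mass, is genuinely nonstandard and has no analogue for strictly elliptic operators on nonsmooth domains; and (ii) the bookkeeping of $H_{\fa,\fb}$ — verifying that \emph{every} commutator term either drops strictly in the order $(|\fa|,|\fb|)$ or carries a degenerate factor exactly matching the weight ratio $d\mu_{\fa'}/d\mu_{\fa}$ whenever an extra $x_j$-derivative is produced. It is precisely here that the normal form \eqref{eq:Operator_adapted_system}, the factorization $b_i=x_i\tilde b_i$ for $i\le n_0$ forced by cleanness, the use of the equation to trade second $x_i$-derivatives for $t$-derivatives, and the $H^1$-gain built into the inductive hypothesis are all essential.
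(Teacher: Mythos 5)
Your overall strategy is the same as the paper's: reduce to a smooth local weak solution via the approximation lemma, differentiate the equation to see that $D^{\fa}_xD^{\fb}_yu$ solves a Kimura equation with weights shifted to $b_i+\fa_i$ and natural measure $d\mu_{\fa}$, and close an induction using a combined $L^\infty_tL^2$-plus-$L^2_tH^1$ estimate with cutoffs in space and time. This matches Lemmas~\ref{lem:First_order_derivatives}--\ref{lem:Equation_higher_order_derivatives}, Proposition~\ref{prop:Higher_order_derivatives}, and Lemma~\ref{lem:Approx_smooth}.

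However, your bookkeeping of the source $H_{\fa,\fb}$ has a genuine gap, and it hides the one technical device the paper truly needs. When you differentiate $b_i\partial_{x_i}u$ by $x_{i_0}$ with $n_0+1\le i\le n$ and $i\ne i_0$, the commutator term is $(\partial_{x_{i_0}}b_i)\,D^{\fe_i}u$. The coefficient is merely bounded (the cleanness factorization $b_i=x_i\tilde b_i$ is only available for $i\le n_0$, and even then only the \emph{tangential} derivatives $\partial_{x_{i_0}}b_i$ with $i_0\ne i$ gain a factor $x_i$, not $\partial_{x_i}b_i$), and the derivative $D^{\fe_i}u$ has the \emph{same} order $(1,0)$ as $D^{\fe_{i_0}}u$ — so it is not "strictly below $(p,q)$" as you claim. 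Worse, it must be estimated in $L^2(d\mu_{\fe_{i_0}})$, whereas the inductive $H^1(d\mu)$-gain only supplies $D^{\fe_i}u\in L^2(d\mu_{\fe_i})$; the ratio $d\mu_{\fe_{i_0}}/d\mu_{\fe_i}\sim x_{i_0}/x_i$ is unbounded, so this term is not controlled by the inductive hypothesis at all. The paper resolves this with a dedicated integration-by-parts identity (Lemma~\ref{lem:Bad_first_derivative}), which uses $b_i\restrictedto_{\{x_i=0\}}\geq\beta_0>0$ to write $x_i^{b_i-1}=b_i^{-1}\partial_{x_i}(x_i^{b_i})$ and thereby absorb the extra $x_i^{-1}$ into one more weighted derivative of $D^{\fe_i}u$, followed by an $\eps$-absorption of those second derivatives. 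Your proposed substitute — rewriting a "bad" $x_i\partial_{x_i}^2$ as a $t$-derivative via the equation — is aimed at a second-order term that never actually appears unscaled in the source (the Kimura structure automatically supplies the factors $x_ix_j$, $x_i$ there); it does nothing for the genuinely problematic \emph{first-order} term $(\partial_{x_{i_0}}b_i)D^{\fe_i}u$. Without Lemma~\ref{lem:Bad_first_derivative}, or an equivalent, the induction does not close.
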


The proof of Theorem \ref{thm:Regularity_Sobolev} is based on a series of auxiliary results, which we prove in the sequel. Because of the local nature of the problem, we will use a smooth cutoff function, $\varphi:\bar S_{n,m}\rightarrow [0,1]$, such that 
\begin{equation}
\label{eq:Cutoff_Sobolev_est}
\varphi\equiv 1\hbox{ on } \bar\cB_r\quad\hbox{ and }\quad \varphi \equiv 0\hbox{ on } \bar\cB_{r_0}^c,
\end{equation}
where $0<r<r_0<R$. We begin with

\begin{lem}[First-order derivatives estimates]
\label{lem:First_order_derivatives}
Assume that the operator $L$ satisfies \eqref{eq:Operator_adapted_system} and Assumption \ref{assump:Coeff}. Let $T>0$, $R\in (0,1)$, $f\in L^2(\cB_R; d\mu)$, $g\in L^2((0,T);L^2(\cB_R;d\mu))$, and let $u$ be a local weak solution to equation 
\begin{equation}
\label{eq:Initial_value_problem_local_with_source}
\begin{aligned}
\left\{\begin{array}{rl}
u_t-Lu=g & \hbox{ on } (0, T)\times \cB_R,\\ 
u(0) = f& \hbox{ on } \cB_R, 
\end{array} \right.
\end{aligned}
\end{equation}
Then for all $R, T>0$, $0<r<R$, and $\eps>0$, there is a positive constant, $C=C(\eps,L,r,r_0,R,T)$, such that
\begin{equation}
\label{eq:First_order_derivatives_cutoff}
\begin{aligned}
&\sup_{s\in[t,T]}\|u(s)\varphi\|^2_{L^2(\cB_R;d\mu)}  
+\int_0^T \left(\sum_{i=1}^n\|D^{\fe_i}u(s)\varphi\|^2_{L^2(\cB_R;d\mu_{\fe_i})}
+ \sum_{l=1}^m \|D^{\ff_l}u(s)\varphi\|^2_{L^2(\cB_R;d\mu)}\right)\,ds\\
&\quad\leq 
C\left(\|u(0)\varphi\|^2_{L^2(\cB_R;d\mu)} + \int_0^T \|u(s)(\varphi+|\nabla\varphi|)\|_{L^2(\cB_R;d\mu)}\,ds\right)\\
&\quad\quad
+ \eps\int_0^T \|g(s)\varphi\|_{L^2(\cB_R;d\mu)}\,ds,
\end{aligned}
\end{equation}
where the cutoff function $\varphi$ is as in \eqref{eq:Cutoff_Sobolev_est}.
\end{lem}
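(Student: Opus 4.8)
The plan is to derive \eqref{eq:First_order_derivatives_cutoff} as a weighted energy estimate, obtained by testing the weak formulation of \eqref{eq:Initial_value_problem_local_with_source} against $v = u\varphi^2$ and exploiting the coercivity \eqref{eq:Coercivity_Dirichlet_form} of the Dirichlet form. Concretely, I would first justify that $u\varphi^2$ is an admissible test function in Definition~\ref{defn:Weak_sol_local}(a): since $\varphi\in C^\infty_c(\ul{\cB_R})$ and $u\varphi\in L^2((0,T);H^1(\cB_R;d\mu))$ with $d(u\varphi)/dt\in L^2((0,T);H^{-1}(\cB_R;d\mu))$, the product $u\varphi^2 = (u\varphi)\varphi$ lies in the same space. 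Substituting $v=u\varphi^2$ into \eqref{eq:Weak_sol_var_eq} and using the standard identity $\int_0^s \langle (u\varphi)_t, u\varphi\rangle\,dt = \tfrac12\|u(s)\varphi\|^2_{L^2(d\mu)} - \tfrac12\|u(0)\varphi\|^2_{L^2(d\mu)}$ (valid because $u\varphi\in C([0,T];L^2(d\mu))$, as in the global case), one obtains for each $s\in[0,T]$:
\begin{equation*}
\tfrac12\|u(s)\varphi\|^2_{L^2(\cB_R;d\mu)} + \int_0^s Q(u,u\varphi^2)\,dt = \tfrac12\|u(0)\varphi\|^2_{L^2(\cB_R;d\mu)} + \int_0^s (g, u\varphi^2)_{L^2(\cB_R;d\mu)}\,dt.
\end{equation*}

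The heart of the argument is to extract the gradient term from $Q(u,u\varphi^2)$. Writing $Q = Q_{\hbox{\tiny{sym}}} + (V\,\cdot\,,\,\cdot\,) + (c\,\cdot\,,\,\cdot\,)$ as in \eqref{eq:Bilinear_form}, the key algebraic step is the Leibniz-type rearrangement
$$
Q_{\hbox{\tiny{sym}}}(u, u\varphi^2) = Q_{\hbox{\tiny{sym}}}(u\varphi, u\varphi) - \hbox{(terms with }u^2\hbox{ and }\nabla\varphi\otimes\nabla\varphi\hbox{ or }\varphi\nabla\varphi),
$$
so that the principal part $Q_{\hbox{\tiny{sym}}}(u\varphi,u\varphi)$ controls $\sum_i\|D^{\fe_i}u\,\varphi\|^2_{L^2(d\mu_{\fe_i})} + \sum_l\|D^{\ff_l}u\,\varphi\|^2_{L^2(d\mu)}$ up to a multiple of $\|u\varphi\|^2_{L^2(d\mu)}$ via coercivity \eqref{eq:Coercivity_Dirichlet_form} (applied on $\cB_R$), while all the commutator terms generated by moving $\varphi$ through the derivatives are bounded by $\|u(\varphi+|\nabla\varphi|)\|^2_{L^2(d\mu)}$ after applying Cauchy--Schwarz with a small parameter to absorb any stray $|D^{\fe_i}u\,\varphi|$ factors back into the good term. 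The first-order terms $(Vu, u\varphi^2)$ and $(cu,u\varphi^2)$ are handled the same way: the $x_i\partial_{x_i}$ structure of $V$ in \eqref{eq:V} means each such term, after Cauchy--Schwarz, splits into $\eps\sum_i\|D^{\fe_i}u\,\varphi\|^2_{L^2(d\mu_{\fe_i})}$ (absorbed) plus $C_\eps\|u\varphi\|^2_{L^2(d\mu)}$; the logarithmic factors $\ln x_k$ only occur with a genuine $x_i$ weight in front, so $x_i|\ln x_k|^2$ stays bounded on $\cB_R$ and causes no trouble. Finally the source term obeys $|(g,u\varphi^2)_{L^2(d\mu)}| \le \tfrac1{2\eps}\|g\varphi\|_{L^2(d\mu)} + \tfrac\eps2\|g\varphi\|_{L^2(d\mu)}\|u\varphi\|^2_{\cdots}$ — or more cleanly, split as $\eps\|g\varphi\|_{L^2(d\mu)}$ plus a term absorbed into the left side after first obtaining an $L^2$-in-time bound on $\|u\varphi\|_{L^2(d\mu)}$; one then collects, applies Gr\"onwall in $s$ to handle the $-c_3\|u\varphi\|^2_{L^2(d\mu)}$ defect from \eqref{eq:Coercivity_Dirichlet_form} plus the absorbed lower-order pieces, and takes the supremum over $s\in[t,T]$ to arrive at \eqref{eq:First_order_derivatives_cutoff}.

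The main obstacle I anticipate is \emph{not} the algebra but the justification that $v = u\varphi^2$ is a legitimate test function together with the time-integration-by-parts identity, given that $u$ is only a \emph{local} weak solution in the sense of Definition~\ref{defn:Weak_sol_local} (tested against functions supported in $[0,T]\times\ul{\cB_R}$, with the product structure $u\varphi\in\cF$ rather than $u\in\cF$). One must verify that $(u\varphi)\varphi$ again has a time derivative in $H^{-1}(\cB_R;d\mu)$ — this follows by a direct computation $\tfrac{d}{dt}((u\varphi)\varphi) = (\tfrac{d}{dt}(u\varphi))\varphi$ since $\varphi$ is time-independent and multiplication by the bounded smooth $\varphi$ maps $H^{\pm1}(\cB_R;d\mu)$ to itself — and that the initial-time evaluation in the integration-by-parts formula is meaningful, which uses property 3 of Definition~\ref{defn:Weak_sol_local}(a) giving $\|u(t)\varphi - f\varphi\|_{L^2(d\mu)}\to 0$. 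A secondary technical point is that the identity $\int_0^s\langle w_t,w\rangle = \tfrac12(\|w(s)\|^2 - \|w(0)\|^2)$ for $w = u\varphi\in L^2((0,T);H^1)\cap H^1((0,T);H^{-1})$ requires the standard Lions--Magenes lemma \cite[Chapter~3]{Lions_Magenes1} in this weighted setting, which is available because $H^1(\cB_R;d\mu)\hookrightarrow L^2(\cB_R;d\mu)\hookrightarrow H^{-1}(\cB_R;d\mu)$ is a Gelfand triple by construction. Once these structural points are in place the estimate follows by the routine absorption-and-Gr\"onwall argument sketched above.
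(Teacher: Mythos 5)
Your overall strategy — test the weak formulation against $v=u\varphi^2$, use the time-integration-by-parts identity, lower-bound $Q_{\mathrm{sym}}(u,u\varphi^2)$ by the weighted gradient minus commutator terms, and absorb the lower-order contributions — is the same as the paper's. The functional-analytic points you raise (that $u\varphi^2$ is admissible as a test function, and that the Lions--Magenes identity applies in the weighted Gelfand triple) are correctly identified and dealt with. The Gr\"onwall step is unnecessary here since the estimate \eqref{eq:First_order_derivatives_cutoff} already allows the full $\|u(\varphi+|\nabla\varphi|)\|^2_{L^2(d\mu)}$ term on the right-hand side, so one does not need to re-absorb the zeroth-order defect, but this is only a cosmetic inefficiency.

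There is, however, a genuine gap in your treatment of the drift $V$ from \eqref{eq:V}. You assert that ``the logarithmic factors $\ln x_k$ only occur with a genuine $x_i$ weight in front, so $x_i|\ln x_k|^2$ stays bounded on $\cB_R$ and causes no trouble.'' This is false in two respects. First, the second line of \eqref{eq:V} contains the term $\sum_{l,l'}\sum_{k>n_0}\beta_{ll'k}(z)\,\partial_{y_{l'}}b_k\,\ln x_k\,\partial_{y_l}$, in which $\ln x_k$ appears with \emph{no} $x_i$ prefactor; after Cauchy--Schwarz the remainder is $\int|\ln x_k|^2\varphi^2u^2\,d\mu$, which is not controlled by $\|u\varphi\|^2_{L^2(d\mu)}$ alone. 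Second, even in the $x_i\partial_{x_i}$ part, the $x_i$ is consumed in splitting $x_i\ln x_k(\partial_{x_i}u)\varphi^2u=\bigl(x_i^{1/2}\varphi\partial_{x_i}u\bigr)\bigl(x_i^{1/2}\ln x_k\,\varphi u\bigr)$ to produce the good term $x_i|\partial_{x_i}u|^2\varphi^2$; what survives is $x_i|\ln x_k|^2\varphi^2u^2$ with $i\neq k$ in general, and $x_i|\ln x_k|^2$ is unbounded near $\{x_k=0\}$. The paper handles the resulting term $\int\bigl(\sum_{k>n_0}|\ln x_k|^2\bigr)\varphi^2u^2\,d\mu$ by a nontrivial weighted Hardy/Poincar\'e-type inequality, \cite[Lemma~B.3]{Epstein_Mazzeo_2016}, which trades the logarithm for an $\eps$-fraction of the weighted gradient plus $C_\eps\int(\varphi^2+|\nabla\varphi|^2)u^2\,d\mu$, exploiting that the singular $\ln x_k$ only occurs for indices $k$ with positive weight $b_k\geq\beta_0$. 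Without invoking such an inequality your estimate does not close.
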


\begin{rmk}
It is clear that inequality \eqref{eq:First_order_derivatives_cutoff} implies the estimate:
\begin{equation}
\label{eq:First_order_derivatives}
\begin{aligned}
&\sup_{s\in[0,T]}
\|u(s)\|^2_{L^2(\cB_r;d\mu)}
+ \int_0^T \left(\sum_{i=1}^n\|D^{\fe_i}u(s)\|^2_{L^2(\cB_r;d\mu_{\fe_i})}
+ \sum_{l=1}^m \|D^{\ff_l}u(s)\|^2_{L^2(\cB_r;d\mu)}\right)\,ds\\
&\leq C\left(\|u(0)\|^2_{L^2(\cB_R;d\mu)} + \|u\|^2_{L^2((0,T);L^2(\cB_R;d\mu))}\right) + \eps\|g\|^2_{L^2((0,T);L^2(\cB_R;d\mu))}.
\end{aligned}
\end{equation}
\end{rmk}

\begin{proof}[Proof of Lemma \ref{lem:First_order_derivatives}]
  The proof of Lemma \ref{lem:First_order_derivatives} can be done using a
  standard argument applicable to strictly elliptic differential equations,
  except that we have to be careful because the expression of the bilinear form
  $Q(u,v)$, introduced in \eqref{eq:Bilinear_form}, contains mild logarithmic
  singularities, which we show how to control. The two terms appearing in the
  expression \eqref{eq:Bilinear_form} of the bilinear form $Q(u,v)$ are given
  by the symmetric bilinear form $Q_{\hbox{\tiny{sym}}}$ defined in
  \eqref{eq:Q_sym}, and the vector field $V$ is given by \eqref{eq:V}. We
  recall that the meaning of the integer $n_0$ appearing in the expression of
  $V$ is given in \eqref{eq:n_0}. Letting $v:=\varphi^2u$ and using the
  ellipticity condition \eqref{eq:Uniform_ellipticity}, we find that there are
  positive constants, $C_1=C_1(L)$ and $C_2=C_2(L)$, such that
\begin{equation}
\label{eq:Q_sym_bound}
Q_{\hbox{\tiny{sym}}}(u, \varphi^2 u) 
\geq C_1\int_{\cB_R}\left(\sum_{i=1}^nx_i|D^{\fe_i}u|^2 +\sum_{l=1}^m|D^{\ff_l}u|^2\right)\varphi^2\,d\mu
-C_2\int_{\cB_R}\left(\varphi^2+|\nabla\varphi|^2\right)u^2\, d\mu.
\end{equation}
Using the expression of the vector field $V$, for all $\eps>0$, we have that
there is a positive constant, $C_3=C_3(\eps,L)$, such that
\begin{align*}
|(Vu,\varphi^2u)_{L^2(\cB_R;d\mu)}| 
&\leq \eps\int_{\cB_R}\left(\sum_{i=1}^nx_i|D^{\fe_i}u|^2 +\sum_{l=1}^m|D^{\ff_l}u|^2\right)\varphi^2\,d\mu\\
&\quad+C_3\int_{\cB_R}\left(\sum_{i=n_0+1}^{n}|\ln x_i|^2\right)\varphi^2u^2\, d\mu.
\end{align*}

To bound the last term in the preceding inequality, we can employ the argument
of the proof of \cite[Lemma B.3]{Epstein_Mazzeo_2016} \footnote{We note that
  the proof of \cite[Lemma B.3]{Epstein_Mazzeo_2016} requires that the weights
  $b_i\restrictedto_{\{x_i=0\}}$ are positive, for all $1\leq i\leq n$, but it
  is easy to see that the same proof holds also in the case when $V$ has
  logarithmic singularities $\ln x_i$ corresponding to weights $b_i$ such that
  $b_i\restrictedto_{\{x_i=0\}}$ is positive, which is the case in the present
  setting by definition \eqref{eq:n_0} of the integer $n_0$.}  to conclude
that, for all $\eps>0$, there is a positive constant, $C_4=C_4(\eps,L)$, with
the property that
\begin{align*}
\int_{\cB_R}\left(\sum_{i=n_0+1}^{n}|\ln x_i|^2\right)\varphi^2u^2\, d\mu
&\leq 
\eps\int_{\cB_R}\left(\sum_{i=1}^nx_i|D^{\fe_i}u|^2 +\sum_{l=1}^m|D^{\ff_l}u|^2\right)\varphi^2\,d\mu\\
&\quad + C_4\int_{\cB_R}\left(\varphi^2+|\nabla\varphi|^2\right)u^2\, d\mu.
\end{align*}
Combining the preceding two inequalities, it follows that
\begin{equation}
\label{eq:V_bound}
\begin{aligned}
|(Vu,\varphi^2u)_{L^2(\cB_R;d\mu)}| 
&\leq \eps\int_{\cB_R}\left(\sum_{i=1}^nx_i|D^{\fe_i}u|^2 +\sum_{l=1}^m|D^{\ff_l}u|^2\right)\varphi^2\,d\mu\\
&\quad+ C_5\int_{\cB_R}\left(\varphi^2+|\nabla\varphi|^2\right)u^2\, d\mu,
\end{aligned}
\end{equation}
where $C_5=C_5(\eps,L)$ is a positive constant. Choosing $\eps$ small enough, inequalities \eqref{eq:Q_sym_bound}, \eqref{eq:V_bound}, and identity \eqref{eq:Bilinear_form} yield
\begin{equation}
\label{eq:Q_bound}
\begin{aligned}
|Q(u,\varphi^2u)| 
&\geq C_1'\int_{\cB_R}\left(\sum_{i=1}^n|D^{\fe_i}u|^2 +\sum_{l=1}^m|D^{\ff_l}u|^2\right)\varphi^2\,d\mu\\
&\quad- C_2'\int_{\cB_R}\left(\varphi^2+|\nabla\varphi|^2\right)u^2\, d\mu,
\end{aligned}
\end{equation}
for some positive constants, $C'_1=C'_1(L)$ and $C'_2=C'_2(L)$. Inequality \eqref{eq:Q_bound} together with the property that
$$
\left\langle\frac{du}{dt},\varphi^2 u\right\rangle = \frac{1}{2} \frac{d}{dt}\|\varphi u\|^2_{L^2(\cB_R;d\mu)},
$$
and the fact that $u$ is a local weak solution to equation \eqref{eq:Initial_value_problem_local_with_source} give us:
\begin{align*}
&\|\varphi u(t)\|^2_{L^2(\cB_R;d\mu)} + 
C'_1\int_0^T\int_{\cB_R}\left(\sum_{i=1}^n x_i|D^{\fe_i}u(s)|^2 +\sum_{l=1}^m|D^{\ff_l}u(s)|^2\right)\varphi^2\,d\mu\,ds\\
&\quad\leq 
\|\varphi u(0)\|^2_{L^2(\cB_R;d\mu)} + C'_2 \int_0^T \int_{\cB_R}\left(\varphi^2+|\nabla\varphi|^2\right)u^2(s)\, d\mu\,ds
+ \eps\int_0^T \int_{S_{n,m}}\varphi^2g^2(s)\, d\mu\,ds,
\end{align*}
for a.e. $t\in (0,T)$, where $\eps>0$ and $C'_2=C'_2(\eps, L)$ is a positive constant. Thus, from the preceding inequality and the definition of the cutoff function $\varphi$ in \eqref{eq:Cutoff_Sobolev_est} we deduce estimate \eqref{eq:First_order_derivatives_cutoff}. This completes the proof.
\end{proof}

In the proofs of Lemma \ref{lem:Improved_first_order_derivatives} and \ref{lem:Equation_higher_order_derivatives} and of Proposition \ref{prop:Higher_order_derivatives}, we work with smooth functions, $u\in C^{\infty}([0,T]\times\bar\cB_R)$, but this strong hypothesis will be removed in the proof of Theorem \ref{thm:Regularity_Sobolev} via an approximation argument of local weak solutions by smooth functions established in Lemma \ref{lem:Approx_smooth}.

\begin{lem}[Improved first- and second-order derivatives estimates]
\label{lem:Improved_first_order_derivatives}
Assume that the hypotheses of Theorem \ref{thm:Regularity_Sobolev} hold and that the local weak solution $u$ belongs to 
$C^{\infty}([0,T]\times \bar \cB_R)$ and that, for all $(\fa,\fb)\in\NN^n\times\NN^m$ such that $|\fa|+|\fb|=1$, and for all $0<T_0<T$, the derivative $D^{\fa}_xD^{\fb}_yu$ belongs to $L^2((T_0,T);H^1(B_R;d\mu_{\fa}))$. Then for all $0<t<T$ and $0<r<R$, there is a positive constant, 
$C=C(L,r,R,t,T)$, such that
\begin{equation}
\label{eq:Improved_first_order_derivatives}
\begin{aligned}
&\sup_{s\in[t,T]}
\left(\sum_{i=1}^n\|D^{\fe_i}u(s)\|^2_{L^2(\cB_r;d\mu_{\fe_i})} + \sum_{l=1}^m\|D^{\ff_l}u(s)\|^2_{L^2(\cB_r;d\mu)}\right)\\
&+\int_t^T
\left(\sum_{i,j=1}^n\|D^{\fe_i+\fe_j}u(s)\|^2_{L^2(\cB_r;d\mu_{\fe_i+\fe_j})}
+\sum_{i=1}^n\sum_{l=1}^m\|D^{\fe_i+\ff_l}u(s)\|^2_{L^2(\cB_r;d\mu_{\fe_i})}\right.\\
&\quad\quad\left.
+\sum_{l,k=1}^m\|D^{\ff_l+\ff_k}u(s)\|^2_{L^2(\cB_r;d\mu)}\right)\,ds\\
&\leq C\left(\|u(0)\|^2_{L^2(\cB_R;d\mu)} + \|u\|^2_{L^2((0,T);L^2(\cB_R;d\mu))}\right).
\end{aligned}
\end{equation}
\end{lem}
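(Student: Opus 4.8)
Since $u\in C^{\infty}([0,T]\times\bar\cB_R)$, the plan is to differentiate the equation $u_t-Lu=0$ directly and run a Caccioppoli-type energy estimate for the resulting first-order system $\{D^{\fe_i}u\}_{i}\cup\{D^{\ff_l}u\}_{l}$. The coercivity of the Dirichlet forms attached to the derivative equations will produce exactly the second-order quantities on the left of \eqref{eq:Improved_first_order_derivatives}, while all remainder terms will be absorbed either into that coercive part (with a small constant) or into the already established first-order estimate \eqref{eq:First_order_derivatives}. A time cutoff removes the dependence on initial data.

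\textbf{Step 1: the equations for the derivatives.} First I would fix $1\le i\le n$, set $v_i:=D^{\fe_i}u$, and differentiate $u_t-Lu=0$ in $x_i$. Using $[\partial_{x_i},x_i\partial_{x_i}^2]=\partial_{x_i}^2$ one gets, pointwise on $(0,T)\times\cB_R$, $(v_i)_t-L^{(i)}v_i=g_i$, where $L^{(i)}$ has the same second-order part as $L$ but the coefficient of $\partial_{x_i}$ equal to $b_i+1$. Thus $L^{(i)}$ is again of the form \eqref{eq:Operator}, satisfies Assumption \ref{assump:Coeff} with $n_0$ replaced by $n_0-1$ if $i\le n_0$ (since then $b_i+1\equiv1$) and by $n_0$ otherwise, and the measure naturally attached to it as in \eqref{eq:Weight} is precisely $d\mu_{\fe_i}$. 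The source $g_i$ is a finite sum of terms obtained by differentiating coefficients of $L$: besides ``tame'' terms of the form $(\text{smooth})\,v_i$, $(\text{smooth})\,D^{\ff_l}u$, $(\text{smooth})\,u$ and $(\text{smooth})\,x_jD^{\fe_j}u$, it contains the genuinely second-order terms $x_jx_k(\partial_{x_i}a_{jk})D^{\fe_j+\fe_k}u$, $(\partial_{x_i}d_{lk})D^{\ff_l+\ff_k}u$ and $x_j(\partial_{x_i}c_{jl})D^{\fe_j+\ff_l}u$ with $j,k\ne i$. Likewise $w_l:=D^{\ff_l}u$ solves $(w_l)_t-Lw_l=g'_l$ with $g'_l$ of the same structure, now with $d\mu$ as the natural measure. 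By hypothesis $v_i\in L^2((T_0,T);H^1(\cB_R;d\mu_{\fe_i}))$ and $w_l\in L^2((T_0,T);H^1(\cB_R;d\mu))$ for all $0<T_0<T$, so every weighted integral below is finite.

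\textbf{Step 2: the coupled energy estimate.} Pick $r<r_0<R$, let $\varphi$ be as in \eqref{eq:Cutoff_Sobolev_est}, and let $\chi\in C^{\infty}([0,T];[0,1])$ vanish on $[0,t/2]$ and equal $1$ on $[t,T]$; then $\chi v_i$ solves $\partial_s(\chi v_i)-L^{(i)}(\chi v_i)=\chi g_i+\chi'v_i$ with zero initial data (and similarly for $\chi w_l$). I would then test the equation for $\chi v_i$ with $\varphi^2\chi v_i$ in $L^2(\cB_R;d\mu_{\fe_i})$, the equation for $\chi w_l$ with $\varphi^2\chi w_l$ in $L^2(\cB_R;d\mu)$, use $\int_{\cB_R}\partial_s(\chi v_i)\,\varphi^2\chi v_i\,d\mu_{\fe_i}=\tfrac12\tfrac{d}{ds}\|\varphi\chi v_i\|^2_{L^2(\cB_R;d\mu_{\fe_i})}$, and invoke the G\r{a}rding inequality for the Dirichlet forms of $L^{(i)}$ and of $L$ (obtained exactly as in \eqref{eq:Q_sym_bound}--\eqref{eq:Q_bound}, the logarithmic singularities of the drifts being controlled via the argument of \cite[Lemma B.3]{Epstein_Mazzeo_2016}, all the relevant weights being positive). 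Summing over $i$ and $l$ and integrating in $s$ yields a bound of the form $\sup_{\tau}\!\big(\sum_i\|\varphi\chi v_i(\tau)\|^2_{L^2(d\mu_{\fe_i})}+\sum_l\|\varphi\chi w_l(\tau)\|^2_{L^2(d\mu)}\big)+c\,\mathcal D\le C\,\mathcal R_1+\mathcal R_g$, where $\mathcal D$ is the integral over $(0,T)\times\cB_R$, against $d\mu_{\fe_i+\fe_j}$, $d\mu_{\fe_i}$, $d\mu$ respectively, of $\varphi^2\chi^2$ times the squares of all second-order derivatives on the left of \eqref{eq:Improved_first_order_derivatives}; $\mathcal R_1$ is the corresponding integral of $(\varphi^2+|\nabla\varphi|^2)\chi^2(\sum_i|v_i|^2+\sum_l|w_l|^2)$; and $\mathcal R_g$ collects the source contributions $\int_0^T|(\chi g_i+\chi'v_i,\varphi^2\chi v_i)_{L^2(d\mu_{\fe_i})}|\,ds$ and their $w_l$-analogues.

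\textbf{Step 3: absorbing the sources, and the main obstacle.} The hard part will be $\mathcal R_g$. The term $\chi'v_i$ is supported in $(t/2,t)$, so it contributes at most $C\int_0^T\|\varphi v_i\|^2_{L^2(d\mu_{\fe_i})}\,ds$, hence is controlled by \eqref{eq:First_order_derivatives}; likewise $\mathcal R_1$ and the tame terms of $g_i$ reduce, by the weighted Young inequality of the proof of Lemma \ref{lem:First_order_derivatives}, to multiples of $\int_0^T(\|\varphi u\|^2_{L^2(d\mu)}+\sum_j\|\varphi D^{\fe_j}u\|^2_{L^2(d\mu_{\fe_j})}+\sum_l\|\varphi D^{\ff_l}u\|^2_{L^2(d\mu)})\,ds$, controlled by \eqref{eq:First_order_derivatives} (the $\|u\|^2$ part directly by the term $\|u\|^2_{L^2((0,T);L^2(\cB_R;d\mu))}$). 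The genuinely second-order terms of $g_i$ are the crux: they are second derivatives of $u$ with smooth coefficients that do \emph{not} carry the ``correct'' powers of $x_i$, so they cannot be treated as a black-box source in the sense of Lemma \ref{lem:First_order_derivatives}. What makes the scheme close is that these second derivatives are exactly the quantities being estimated: for $\int x_jx_k(\partial_{x_i}a_{jk})D^{\fe_j+\fe_k}u\cdot\varphi^2\chi^2v_i\,d\mu_{\fe_i}$ one writes the integrand as $(|D^{\fe_j+\fe_k}u|\varphi\chi)\cdot(|v_i|\varphi\chi)$ and distributes the weight so that, using Young's inequality and $0<x_j<R<1$ on $\cB_R$, it is bounded by $\eps\int|D^{\fe_j+\fe_k}u|^2\varphi^2\chi^2\,d\mu_{\fe_j+\fe_k}+C_\eps\int|v_i|^2\varphi^2\chi^2\,d\mu_{\fe_i}$; the $d_{lk}$- and $c_{jl}$-terms are handled identically, producing $\eps\int|D^{\ff_l+\ff_k}u|^2\varphi^2\chi^2\,d\mu$ and $\eps\int|D^{\fe_j+\ff_l}u|^2\varphi^2\chi^2\,d\mu_{\fe_j}$ plus first-order remainders. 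Summed over all indices the $\eps$-pieces amount to $\eps\mathcal D$ and the $C_\eps$-pieces to quantities already controlled by \eqref{eq:First_order_derivatives}; choosing $\eps=\eps(L)$ small to absorb $\eps\mathcal D$ into $c\mathcal D$, and recalling $\chi\equiv1$ on $[t,T]$, $\varphi\equiv1$ on $\cB_r$, $\|u(0)\|_{L^2(\cB_R;d\mu)}=\|f\|_{L^2(\cB_R;d\mu)}$, one obtains \eqref{eq:Improved_first_order_derivatives}. A secondary point to be careful about, exactly as in the derivation of \eqref{eq:Bilinear_form}, is that the boundary fluxes created when integrating by parts in the Dirichlet form of $L^{(i)}$ vanish: along a face $\{x_k=0\}$ with $k\le n_0$, $k\ne i$, one uses that $D^{\fe_i}u$ still vanishes there (and, by smoothness and $u\varphi\in H^1(d\mu)$, to the right order), while along the remaining faces the factor $x_k$ in the degenerate second-order coefficient kills the flux.
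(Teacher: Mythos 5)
Your overall strategy coincides with the paper's: differentiate the equation, recognize that the derivative solves a Kimura-type equation whose natural measure is $d\mu_{\fe_i}$ (resp.\ $d\mu$), run a Caccioppoli estimate, and absorb the genuinely second-order source terms via a small-$\eps$ Young inequality into the coercive part. You re-derive the content of Lemma \ref{lem:First_order_derivatives} for the derivative equations rather than invoking it as a black box, and you use a temporal cutoff $\chi$ where the paper instead integrates its estimate over the starting time; these are cosmetic differences.

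There is, however, a genuine gap in your Step 1 and hence in Step 3. You list the first-order source terms in $g_i$ as ``$(\text{smooth})\,x_jD^{\fe_j}u$,'' i.e.\ always carrying a factor $x_j$. That is correct only for $1\le j\le n_0$, $j\neq i$: there the cleanness condition forces $b_j\restrictedto_{\{x_j=0\}}\equiv 0$, and the smoothness of $b_j$ then gives $|\partial_{x_i}b_j|\le c\,x_j$ (this is exactly \eqref{eq:Important_property} in the paper). For $n_0+1\le j\le n$, $j\neq i$, the coefficient $\partial_{x_i}b_j$ has no reason to vanish on $\{x_j=0\}$, so the source contains terms of the form $(\text{smooth})\,D^{\fe_j}u$ with no helpful $x_j$; compare \eqref{eq:Derivatives_source_x}. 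When you test against $\varphi^2\chi^2 v_i$ in $L^2(d\mu_{\fe_i})$ and apply Young, this term produces
$\eps\int |D^{\fe_j}u|^2\,\varphi^2\chi^2\,d\mu_{\fe_i}$, which carries an excess factor $x_j^{-1}$ relative to the natural norm $\|D^{\fe_j}u\|^2_{L^2(d\mu_{\fe_j})}$. This quantity is neither second order (so it cannot be absorbed into your $\mathcal D$ by shrinking $\eps$) nor a first-order quantity against an admissible measure (so \eqref{eq:First_order_derivatives} does not bound it). Your scheme therefore does not close unless $n_0=n$.

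The paper disposes of precisely this obstruction via the Hardy-type inequality of Lemma \ref{lem:Bad_first_derivative}: using the positivity of $b_j$ on $\{x_j=0\}$ one integrates by parts against $x_j^{b_j}$, trading the excess $x_j^{-1}$ for one more derivative in $x_j$. This converts $\int |D^{\fe_j}u|^2\varphi^2\,d\mu_{\fe_{i}}$ into second-order integrals taken against the correct measure $d\mu_{\fe_j+\fe_i}$ (which, carrying the prefactor $\eps$, are absorbable into the coercive part) plus first-order remainders controlled by \eqref{eq:First_order_derivatives_cutoff}. You need this auxiliary inequality, or an equivalent mechanism that trades the $x_j^{-1}$ weight for a derivative, before the absorption argument in your Step 3 is complete.
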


\begin{proof}
Because the weak solution is assumed to be a smooth function on $[0,\infty)\times \bar \cB_R$, we can take derivatives in the equation satisfied by $u$ and use estimates \eqref{eq:First_order_derivatives} applied to the derivatives of $u$ instead of $u$ to derive \eqref{eq:Improved_first_order_derivatives}.

\setcounter{step}{0}
\begin{step}[Derivatives in the $x$-variables]
\label{step:First_derivative_x}
Let $1\leq i_0\leq n$. Our goal is to prove that for all $\eps>0$, there is a positive constant, $C=C(\eps)$, such that
\begin{equation}
\label{eq:After_integration_x}
\begin{aligned}
&\|D^{\fe_{i_0}}u(T)\varphi\|^2_{L^2(\cB_R;d\mu_{\fe_{i_0}})} 
+ \int_t^T\int_{\cB_R}\left(\sum_{i=1}^n x_i|D^{\fe_{i_0}+\fe_i}u(s)|^2 +\sum_{l=1}^m|D^{\fe_{i_0}+\ff_l}u(s)|^2\right)\varphi^2
\,d\mu_{\fe_{i_0}}\, ds
\\
&\leq C\left(\|D^{\fe_{i_0}}u(t)\varphi\|^2_{L^2(\cB_R;d\mu_{\fe_{i_0}})} 
+\|u(0)\|^2_{L^2(\cB_R;d\mu)} + \|u\|^2_{L^2((0,T);L^2(\cB_r;d\mu))}\right)\\
&\quad+\eps\int_t^T\int_{\cB_R}\left(\sum_{i,j=1}^n|x_ix_jD^{\fe_i+\fe_j}u(s)|^2 
+ \sum_{i=1}^n\sum_{l=1}^m|x_iD^{\fe_i+\ff_l}u(s)|^2 \right)  \varphi^2\,d\mu_{\fe_{i_0}}\, ds\\
&\quad+\eps\int_t^T\int_{\cB_R}  \sum_{l,k=1}^m|D^{\ff_l+\ff_k}u(s)|^2 \varphi^2\,d\mu_{\fe_{i_0}}\, ds,
\end{aligned}
\end{equation}
where the cutoff function $\varphi$ is as in \eqref{eq:Cutoff_Sobolev_est}. Taking a derivative in the $x_{i_0}$-variable in equation 
$u_t-Lu = 0$ on $(0,T)\times\cB_R$, we see that the first-order derivative $D^{\fe_{i_0}}u$ satisfies an equation
\begin{equation}
\label{eq:Equation_x}
\left(\partial_t- L^{\fe_{i_0}}\right) D^{\fe_{i_0}}u = g^{\fe_{i_0}},
\end{equation}
where the operator $L^{\fe_{i_0}}$ is also of the form \eqref{eq:Operator} with $\bar a_{ii} =1$ and with weights $b_i$ replaced by 
$b_i':=b_i+\delta_{i_0,i}$, for all $1\leq i\leq n$, and so the measure associated to the operator $L^{\fe_{i_0}}$ using the weights 
$\{b'_i:1\leq i\leq n\}$ in identity \eqref{eq:Weight}, where we replace $b_i$ by $b'_i$, is exactly $d\mu_{\fe_{i_0}}$.

The source function $g^{\fe_{i_0}}$ on the right-hand side of \eqref{eq:Equation_x} is a linear combination with smooth coefficients of at most derivatives of the form:
$x_ix_jD^{\fe_i+\fe_j}u$, $x_iD^{\fe_i+\ff_l}u$, $D^{\ff_l+\ff_k}u$, $\partial_{x_{i_0}}b_i D^{\fe_i}u$, $D^{\ff_l}u$, $u$,
for all $1\leq i,j\leq n$ and for all $1\leq l,k\leq m$. For all $1\leq i\leq n_0$, where the integer $n_0$ is defined in \eqref{eq:n_0}, we have that $b_i$ restricted to $\{x_i=0\}$ is a constant function, and so the smoothness of the coefficients of the operator $L$ imply that 
\begin{equation}
\label{eq:Important_property}
|\partial_{x_{i_0}}b_i| \leq cx_i,\quad\forall\, 1\leq i\leq n_0\hbox{ such that } i\neq i_0,
\end{equation}
where $c$ is a positive constant. Thus, we can rewrite the source function $g^{\fe_{i_0}}$ as a a linear combination with smooth coefficients of derivatives of the form:
\begin{equation}
\label{eq:Derivatives_source_x}
\begin{aligned}
&x_ix_jD^{\fe_i+\fe_j}u,\quad x_iD^{\fe_i+\ff_l}u,\quad D^{\ff_l+\ff_k}u,\quad D^{\ff_l}u,\quad u,\\
&x_iD^{\fe_i}u,\quad\hbox{ if } 1\leq i\leq n_0,\quad\hbox{ and } \quad D^{\fe_i}u,\quad\hbox{ if } n_0+1\leq i\leq n, 
\end{aligned}
\end{equation}
for all $1\leq i,j\leq n$ and for all $1\leq l,k\leq m$.  Estimate \eqref{eq:First_order_derivatives_cutoff} applied to $D^{\fe_{i_0}}u$ yields:
\begin{align*}
&\|D^{\fe_{i_0}}u(T)\varphi\|^2_{L^2(\cB_R;d\mu_{\fe_{i_0}})} 
+ \int_t^T\int_{\cB_R}\left(\sum_{i=1}^n x_i|D^{\fe_{i_0}+\fe_i}u(s)|^2 +\sum_{l=1}^m|D^{\fe_{i_0}+\ff_l}u(s)|^2\right)\varphi^2
\,d\mu_{\fe_{i_0}}\, ds
\\
&\leq C\left(\|D^{\fe_{i_0}}u(t)\varphi\|^2_{L^2(\cB_R;d\mu_{\fe_{i_0}})} 
+ \int_t^T\|D^{\fe_{i_0}}u(s)\|^2_{L^2(\cB_{r_0};d\mu_{\fe_{i_0}})}\, ds\right)\\
&\quad+
\eps\int_t^T\int_{\cB_R}\left(\sum_{i,j=1}^n x_ix_j|D^{\fe_i+\fe_j}u(s)|^2 
+ \sum_{i=1}^n\sum_{l=1}^m x_i|D^{\fe_i+\ff_l}u(s)|^2
\right) \varphi^2\,d\mu_{\fe_{i_0}}\, ds\\
&\quad+\eps\int_t^T\int_{\cB_R}\sum_{l,k=1}^m  |D^{\ff_l+\ff_k}u(s)|^2\varphi^2\,d\mu_{\fe_{i_0}}\, ds\\
&\quad
+ \eps\int_t^T\int_{\cB_R}\left(\sum_{i=1}^n (x_i\mathbf{1}_{\{i\leq n_0\}}+\mathbf{1}_{\{i> n_0\}})|D^{\fe_i}u(s)|^2
+\sum_{l=1}^m|D^{\ff_l}u(s)|^2 + |u(s)|^2\right) \varphi^2\,d\mu_{\fe_{i_0}}\, ds.
\end{align*}
We can now use the local estimate \eqref{eq:First_order_derivatives} applied to the second term and to the last term containing indices $1\leq i\leq n_0$ on the right-hand side of the preceding inequality, to obtain that
\begin{equation}
\label{eq:After_integration_x_2}
\begin{aligned}
&\|D^{\fe_{i_0}}u(T)\varphi\|^2_{L^2(\cB_R;d\mu_{\fe_{i_0}})} 
+ \int_t^T\int_{\cB_R}\left(\sum_{i=1}^n x_i|D^{\fe_{i_0}+\fe_i}u(s)|^2 +\sum_{l=1}^m|D^{\fe_{i_0}+\ff_l}u(s)|^2\right)\varphi^2
\,d\mu_{\fe_{i_0}}\, ds
\\
&\leq C\left(\|D^{\fe_{i_0}}u(t)\varphi\|^2_{L^2(\cB_R;d\mu_{\fe_{i_0}})} 
+\|u(0)\|^2_{L^2(\cB_R;d\mu)} + \|u\|^2_{L^2((0,T);L^2(\cB_R;d\mu))}\right)\\
&\quad
+\eps\int_t^T\int_{\cB_R}\left(\sum_{i,j=1}^nx_ix_j|D^{\fe_i+\fe_j}u(s)|^2 
+ \sum_{i=1}^n\sum_{l=1}^m x_i|D^{\fe_i+\ff_l}u(s)|^2 \right)  \varphi^2\,d\mu_{\fe_{i_0}}\, ds\\
&\quad+\eps\int_t^T\int_{\cB_R}  \sum_{l,k=1}^m|D^{\ff_l+\ff_k}u(s)|^2 \varphi^2\,d\mu_{\fe_{i_0}}\, ds
+\eps\int_t^T\int_{\cB_R}\sum_{i=n_0+1}^n  |D^{\fe_i}u(s)|^2 \varphi^2\,d\mu_{\fe_{i_0}}\, ds.
\end{aligned}
\end{equation}
To obtain a bound on the last term on the right-hand side of the preceding inequality we apply inequality 
\eqref{eq:Bad_first_derivative} with $u$ replaced by $D^{\fe_i}u$ and with the weight $d\mu$ replaced by $d\mu_{\fe_{i_0}}$. This gives us that, for all $ i\neq i_0$, we have
\begin{align*}
\int_t^T\int_{\cB_R}  |D^{\fe_i}u(s)|^2 \varphi^2\,d\mu_{\fe_{i_0}}\, ds
&\leq
\int_t^T\int_{\cB_R}\left(\sum_{j=1}^nx_j|D^{\fe_i+\fe_j}u|^2 +\sum_{l=1}^m|D^{\fe_i+\ff_l}u|^2\right)\varphi^2\,d\mu_{\fe_i+\fe_{i_0}}\, ds\\
&\quad + C\int_t^T\int_{\cB_R}\left(\varphi^2+|\nabla\varphi|^2\right)|D^{\fe_i}u|^2\, d\mu_{\fe_i+\fe_{i_0}}\, ds.
\end{align*}
Applying \eqref{eq:First_order_derivatives_cutoff} to the last term on the right-hand side above, it follows that
\begin{align*}
\int_t^T\int_{\cB_R}  |D^{\fe_i}u(s)|^2 \varphi^2\,d\mu_{\fe_{i_0}}\, ds
&\leq
\int_t^T\int_{\cB_R}\left(\sum_{j=1}^nx_j|D^{\fe_i+\fe_j}u|^2 +\sum_{l=1}^m|D^{\fe_i+\ff_l}u|^2\right)\varphi^2\,d\mu_{\fe_i+\fe_{i_0}}\, ds\\
&\quad + C\left(\|u(0)\|^{L^2(\cB_R;d\mu)} + \|u\|^2_{L^2((0,T);L^2(\cB_R;d\mu))}\right).
\end{align*}
The preceding inequality applied for all $i\neq u_0$, together with \eqref{eq:First_order_derivatives} applied to the last term in \eqref{eq:After_integration_x_2} containing the derivative $D^{\fe_{i_0}}$, and inequality \eqref{eq:After_integration_x_2} yield \eqref{eq:After_integration_x}. This completes the proof of Step \ref{step:First_derivative_x}.
\end{step}

\begin{step}[Derivatives in the $y$-variables]
\label{step:First_derivative_y}
Let $1\leq l_0\leq m$. Our goal is now to prove that for all $\eps>0$, there is a positive constant, $C=C(\eps,L,r,R_0,R)$, such that
\begin{equation}
\label{eq:After_integration_y}
\begin{aligned}
&\|D^{\ff_{l_0}}u(T)\varphi\|^2_{L^2(\cB_r;d\mu)} 
+ \int_t^T\int_{\cB_R}\left(\sum_{i=1}^n x_i|D^{\ff_{l_0}+\fe_i}u(s)|^2 +\sum_{l=1}^m|D^{\ff_{l_0}+\ff_l}u(s)|^2\right)\varphi^2
\,d\mu\, ds
\\
&\leq C\left(\|D^{\ff_{l_0}}u(t)\varphi\|^2_{L^2(\cB_R;d\mu)} + \|u(0)\|^2_{L^2(\cB_R;d\mu)} + \|u\|^2_{L^2((0,T);L^2(\cB_R;d\mu))}\right)\\
&\quad+\eps\int_t^T\int_{\cB_R}\left(\sum_{i,j=1}^n x_ix_j|D^{\fe_i+\fe_j}u(s)|^2 
+ \sum_{i=1}^n\sum_{l=1}^mx_i|D^{\fe_i+\ff_l}u(s)|^2\right) \varphi^2\,d\mu\, ds\\
&\quad+\eps\int_t^T\int_{\cB_R} \sum_{l,k=1}^m|D^{\ff_l+\ff_k}u(s)|^2 \varphi^2\,d\mu\, ds.
\end{aligned}
\end{equation}
Taking a derivative in the $y_{l_0}$-variable in equation $u_t-Lu=0$ on $(0,T)\times\cB_R$, we see that the first-order derivative $D^{\ff_{l_0}}u$ satisfies an equation
\begin{equation}
\label{eq:Equation_y}
\left(\partial_t- L^{\ff_{l_0}}\right) D^{\fl_{l_0}}u = g^{\ff_{i_0}},
\end{equation}
where the operator $L^{\ff_{l_0}}$ has the same form as $L$ is \eqref{eq:Operator} and the same weights as $L$, and the source function $g^{\ff_{l_0}}$ is a linear combination with smooth coefficients of the derivatives \eqref{eq:Derivatives_source_x}, for all $1\leq i,j\leq n$ and for all $1\leq l,k\leq m$ such that $l\neq l_0$. From here on we can apply the same argument as the one used in the proof of estimate \eqref{eq:After_integration_x} with the modification that we use the measure $d\mu$ instead of $d\mu_{\fe_{i_0}}$. We omit the detailed proof as it is identical to that of Step \ref{step:First_derivative_x}. This completes the proof of Step \ref{step:First_derivative_y}.
\end{step}

Adding inequalities \eqref{eq:After_integration_x} applied to all $1\leq i_0\leq n$ and \eqref{eq:After_integration_y} applied to all $1\leq l_0\leq m$, and choosing $\eps:=1/2$, we obtain for all $t\in (0,T)$,
\begin{align*}
&\sum_{i=1}^n\|D^{\fe_i}u(T)\|^2_{L^2(\cB_r;d\mu_{\fe_i})} + \sum_{l=1}^m\|D^{\ff_l}u(T)\|^2_{L^2(\cB_r;d\mu)}\\
&+\int_t^T
\left(\sum_{i,j=1}^n\|D^{\fe_i+\fe_j}u(s)\|^2_{L^2(\cB_r;d\mu_{\fe_i+\fe_j})}
+\sum_{i=1}^n\sum_{l=1}^m\|D^{\fe_i+\ff_l}u(s)\|^2_{L^2(\cB_r;d\mu_{\fe_i})}\right.\\
&\quad\quad\left.
+\sum_{l,k=1}^m\|D^{\ff_l+\ff_k}u(s)\|^2_{L^2(\cB_r;d\mu)}\right)\,ds\\
&\leq C\left(\|u(0)\|^2_{L^2(B_R;d\mu)} +\|u\|^2_{L^2((0,T);L^2(\cB_R;d\mu))}\right)\\
&\quad\quad + C\left(\sum_{i=1}^n \|D^{\fe_i}u(t)\|^2_{L^2(\cB_{r_0};d\mu_{\fe_i})}
+\sum_{l=1}^m \|D^{\ff_l}u(t)\|^2_{L^2(\cB_{r_0};d\mu)}\right).
\end{align*}
Integrating the preceding inequality in $t$ from 0 to $T$ and using estimate \eqref{eq:First_order_derivatives}, it follows that
\begin{align*}
&\sum_{i=1}^nT\|D^{\fe_i}u(T)\|^2_{L^2(\cB_r;d\mu_{\fe_i})} + \sum_{l=1}^mT\|D^{\ff_l}u(T)\|^2_{L^2(\cB_r;d\mu)}\\
&+\int_0^T
\left(\sum_{i,j=1}^n\|D^{\fe_i+\fe_j}u(s)\|^2_{L^2(\cB_r;d\mu_{\fe_i+\fe_j})}
+\sum_{i=1}^n\sum_{l=1}^m\|D^{\fe_i+\ff_l}u(s)\|^2_{L^2(\cB_r;d\mu_{\fe_i})}\right.\\
&\quad\quad\left.
+\sum_{l,k=1}^m\|D^{\ff_l+\ff_k}u(s)\|^2_{L^2(\cB_r;d\mu)}\right)\,sds\\
&\leq C\left(\|u(0)\|^2_{L^2(B_R;d\mu)} +\|u\|^2_{L^2((0,T);L^2(\cB_R;d\mu))}\right).
\end{align*}
From here estimate \eqref{eq:Improved_first_order_derivatives} follows immediately. This completes the proof.
\end{proof}

We next establish the form of the equation satisfied by the higher-order derivatives of the solution $u$. We assume that $\NN^n$ and $\NN^m$ are ordered lexicographically.

\begin{lem}[Equation for higher-order derivatives]
\label{lem:Equation_higher_order_derivatives}
Assume that the hypotheses of Theorem \ref{thm:Regularity_Sobolev} hold and that the local weak solution $u$ belongs to 
$C^{\infty}([0,T]\times \bar \cB_R)$. Then for all $(\fa,\fb)\in\NN^n\times\NN^m$ such that $(\fa,\fb)\neq(0,0)$, the derivative $D^{\fa}_xD^{\fb}_yu$ is a solution to
\begin{equation}
\label{eq:Equation_higher_derivative}
\left(\partial_t-L^{(\fa,\fb)}\right) D^{\fa}_xD^{\fb}_yu = g^{(\fa,\fb)},\quad\hbox{ on } (0,T)\times\cB_R,
\end{equation}
where $L^{(\fa,\fb)}$ is an operator that has the same structure as $L$ defined in \eqref{eq:Operator} and its coefficients satisfy Assumption \ref{assump:Coeff}. Moreover, the weights of the operator $L^{(\fa,\fb)}$ are $\{b_i+\fa_i:1\leq i\leq n\}$ and the source function, $g^{(\fa,\fb)}$, contains linear combinations with smooth coefficients of at most the following derivatives of the function $u$:
\begin{equation}
\label{eq:Source_function_higher}
\begin{aligned}
&x_ix_jD^{\fa'+\fe_i+\fe_j}_xD^{\fb''}_yu,\quad x_iD^{\fa'+\fe_i}_xD^{\fb''+\ff_l}_yu,\quad D^{\fa'}_xD^{\fb''+\ff_l}_yu,
\quad D^{\fa'}_xD^{\fb''+\ff_l+\ff_k}_yu,\quad D^{\fa'}_xD^{\fb''}_yu,\\
&x_ix_jD^{\fa''+\fe_i+\fe_j}_xD^{\fb'}_yu,\quad x_iD^{\fa''+\fe_i}_xD^{\fb'+\ff_l}_yu,\quad D^{\fa''}_xD^{\fb'+\ff_l}_yu,
\quad D^{\fa''}_xD^{\fb'+\ff_l+\ff_k}_yu,\quad D^{\fa''}_xD^{\fb'}_yu,\\
&x_i D^{\fa'+\fe_i}_xD^{\fb''}_yu,\quad x_i D^{\fa''+\fe_i}_xD^{\fb'}_yu,\quad\hbox{ if } 1\leq i\leq n_0,\\
& D^{\fa'+\fe_i}_xD^{\fb'}_yu,\quad D^{\fa''+\fe_i}_xD^{\fb'}u,\quad\hbox{ if } n_0+1\leq i\leq n, 
\end{aligned}
\end{equation}
for all $\fa',\fa''\in\NN^n$ and $\fb',\fb''\in\NN^m$, such that $\fa'<\fa$, $\fa''\leq\fa$, $\fb'<\fb$, $\fb''\leq\fb$, and for all $1\leq i,j\leq n$ and $1\leq l,k\leq m$. 
\end{lem}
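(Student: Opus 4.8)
The plan is to argue by induction on $|\fa|+|\fb|$. The base case $(\fa,\fb)=(0,0)$ is nothing but the hypothesis $(\partial_t-L)u=0$, with $L^{(0,0)}:=L$ and $g^{(0,0)}:=0$, which has the asserted structure by assumption. For the inductive step it suffices to show that if $v:=D^\fa_xD^\fb_yu$ solves an equation $(\partial_t-L^{(\fa,\fb)})v=g^{(\fa,\fb)}$ with $L^{(\fa,\fb)}$ of the stated type (in particular in the normal form $\bar a_{ii}\equiv 1$, with weights $b_i+\fa_i$, and satisfying Assumption \ref{assump:Coeff}) and $g^{(\fa,\fb)}$ a smooth-coefficient combination of the derivatives listed in \eqref{eq:Source_function_higher}, then the same holds for $D^{\fe_{i_0}}v=D^{\fa+\fe_{i_0}}_xD^\fb_yu$ for every $1\le i_0\le n$ and for $D^{\ff_{l_0}}v=D^\fa_xD^{\fb+\ff_{l_0}}_yu$ for every $1\le l_0\le m$; since every nonzero multi-index is reached from $(0,0)$ by successively adding unit vectors, this proves the lemma.

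I would obtain the equation for $D^{\fe_{i_0}}v$ by differentiating $(\partial_t-L^{(\fa,\fb)})v=g^{(\fa,\fb)}$ in $x_{i_0}$ and sorting the Leibniz-rule terms (the $y_{l_0}$ case is identical and simpler, and is discussed at the end). Each term of $L^{(\fa,\fb)}v$ is a monomial in the $x_i$ times a smooth bounded coefficient times an at-most second-order derivative of $v$. When $\partial_{x_{i_0}}$ falls on the highest-order derivative we recover $L^{(\fa+\fe_{i_0},\fb)}$ applied to $D^{\fe_{i_0}}v$, \emph{after} reabsorbing into the drift and lower-order terms every leftover that is first order in $D^{\fe_{i_0}}v$. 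The crucial such leftover comes from differentiating the degenerate diffusion term in the slot $i=i_0$: since $\bar a_{i_0i_0}\equiv 1$, the $\partial_{x_{i_0}}$ hitting the factor $x_{i_0}$ produces $v_{x_{i_0}x_{i_0}}=(D^{\fe_{i_0}}v)_{x_{i_0}}$ with coefficient exactly $1$, i.e.\ a first-order term in $D^{\fe_{i_0}}v$ that raises the $i_0$-th weight by exactly one, which is precisely why the weights of $L^{(\fa+\fe_{i_0},\fb)}$ are $\{b_i+\fa_i+\delta_{i,i_0}\}$. All remaining first-order-in-$D^{\fe_{i_0}}v$ leftovers carry coefficients that either vanish on the face $\{x_j=0\}$ (e.g.\ from $\partial_{x_{i_0}}$ hitting an explicit $x_j$ in an off-diagonal or mixed second-order term, giving coefficients such as $x_j(a_{i_0j}+a_{ji_0})$ or $x_{i_0}a_{i_0i_0}$) or are bounded coefficients of $y$-derivatives (e.g.\ $c_{i_0l}$); reabsorbing these into the drift and the first-order $y$-coefficients of $L^{(\fa+\fe_{i_0},\fb)}$ changes neither the weights nor — and this must be checked — the cleanness condition, because adding to a drift coefficient a smooth function vanishing on $\{x_i=0\}$, or the nonnegative integer $\delta_{i,i_0}$, leaves its restriction to $\{x_i=0\}$ either identically zero or bounded below by a positive constant. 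Strict ellipticity, smoothness and boundedness pass to $L^{(\fa+\fe_{i_0},\fb)}$ because $\partial_{x_{i_0}}$ only rescales the principal part by the unchanged coefficients.

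It then remains to check that the source of the new equation is a smooth-coefficient combination of the derivatives in \eqref{eq:Source_function_higher} with $\fa$ replaced by $\fa+\fe_{i_0}$. The terms in which $\partial_{x_{i_0}}$ lands on a smooth coefficient are smooth multiples of $x_ix_jD^{\fa+\fe_i+\fe_j}_xD^\fb_yu$, $x_iD^{\fa+\fe_i}_xD^{\fb+\ff_l}_yu$, $D^\fa_xD^{\fb+\ff_l+\ff_k}_yu$, $D^\fa_xD^{\fb+\ff_l}_yu$ and $D^\fa_xD^\fb_yu$, all on the list. The terms coming from $\partial_{x_{i_0}}$ applied to the drift terms $b_iv_{x_i}$, namely $(\partial_{x_{i_0}}b_i)D^{\fa+\fe_i}_xD^\fb_yu$, split into three cases: for $i=i_0$ this is zeroth order in $D^{\fe_{i_0}}v$ and is absorbed into the zeroth-order coefficient of $L^{(\fa+\fe_{i_0},\fb)}$; for $1\le i\le n_0$ with $i\ne i_0$, the estimate \eqref{eq:Important_property} turns it into a smooth multiple of $x_iD^{\fa+\fe_i}_xD^\fb_yu$, a listed term; and for $n_0+1\le i\le n$ it is already of the listed form $D^{\fa+\fe_i}_xD^\fb_yu$. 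Finally, $\partial_{x_{i_0}}$ of a term from \eqref{eq:Source_function_higher} is, by the Leibniz rule, either that same term with a new smooth coefficient, a term with one fewer power of some $x_j$, or a term whose $x$-multi-index has been raised by $\fe_{i_0}$; in every case the constraints on the multi-indices (with $\fa$ replaced by $\fa+\fe_{i_0}$) remain satisfied, so the term stays on the list. The $y_{l_0}$-differentiation is entirely analogous: the $x$-monomials being $y$-independent, no first-order leftover in $D^{\ff_{l_0}}v$ is produced by the degenerate diffusion terms, so the weights are unchanged (consistent with the claim), and for $1\le i\le n_0$ one again uses that $\partial_{y_{l_0}}b_i$ vanishes on $\{x_i=0\}$, hence $|\partial_{y_{l_0}}b_i|\le cx_i$.

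I expect the main difficulty to be purely organizational — controlling the proliferation of Leibniz-rule terms and verifying, term by term, that each lands in the prescribed list and that each reabsorption into $L^{(\fa+\fe_{i_0},\fb)}$ is compatible with Assumption \ref{assump:Coeff}. Within this, the one genuinely substantive point is the stability of the cleanness condition under the operations above, which rests on the dichotomy ``a zero weight can only be incremented to a positive integer, a positive weight stays positive'' together with the smallness estimate \eqref{eq:Important_property} for the derivatives of the $b_i$ with $i\le n_0$.
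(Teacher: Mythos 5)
Your proof is correct and follows essentially the same inductive approach as the paper's: differentiate the equation in one variable, identify the new operator $L^{(\fa+\fe_{i_0},\fb)}$ from the Leibniz-rule terms (observing that the degenerate diffusion term $x_{i_0}\partial_{x_{i_0}}^2$ is precisely what increments the weight), and verify, using \eqref{eq:Important_property}, that the remaining terms are a smooth-coefficient combination of derivatives from the prescribed list. The only cosmetic difference is that you start the induction at the trivial base case $(\fa,\fb)=(0,0)$, whereas the paper starts at $|\fa|+|\fb|=1$ by citing the first-order computations already carried out in Lemma~\ref{lem:Improved_first_order_derivatives}.
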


\begin{proof}
We argue by induction on $N:=|\fa|+|\fb|\geq 1$. When $N=1$, the statement in Lemma \ref{lem:Equation_higher_order_derivatives} is proved in Step \ref{step:First_derivative_x} of Lemma \ref{lem:Improved_first_order_derivatives}, when $\fa=\fe_i$ and $1\leq i\leq n$, and in Step \ref{step:First_derivative_y} of Lemma \ref{lem:Improved_first_order_derivatives}, when $\fb=\ff_l$ and $1\leq l\leq m$. For the induction step $N\geq 1$, we fix $(\fa,\fb)\in\NN^n\times\NN^m$ such that $|\fa|+|\fb|=N$ and we assume that \eqref{eq:Equation_higher_derivative} holds, where the function $g^{(\fa,\fb)}$ can be written as a linear combination with smooth coefficients of the functions enumerated in \eqref{eq:Source_function_higher}. We want to prove the analogous statement for $D^{\fa+\fe_{i_0}}_xD^{\fb}_y$ and $D^{\fa}_xD^{\fb+\ff_{l_0}}_y$, for all $1\leq i_0\leq n$ and for all $1\leq l_0\leq m$. We use the following observations. We have that
\begin{equation}
\label{eq:Derivative_a_b}
\begin{aligned}
D^{\fe_{i_0}}_x\left(\left(\partial_t- L^{(\fa,\fb)}\right) D^{\fa}_xD^{\fb}_yu\right) 
&= \left(\partial_t- L^{(\fa+\fe_{i_0},\fb)}\right) D^{\fa+\fe_{i_0}}_xD^{\fb}_yu + h^{(\fa,\fb)+\fe_{i_0}},\\ 
D^{\ff_{l_0}}_y\left(\left(\partial_t- L^{(\fa,\fb)}\right) D^{\fa}_xD^{\fb}_yu\right) 
&= \left(\partial_t- L^{(\fa,\fb+\ff_{l_0})}\right) D^{\fa}_xD^{\fb+\ff_{l_0}}_yu + h^{(\fa,\fb)+\ff_{l_0}}, 
\end{aligned}
\end{equation}
where to obtain the form of the functions $h^{(\fa,\fb)+\fe_{i_0}}$ and $h^{(\fa,\fb)+\ff_{l_0}}$ we apply \eqref{eq:Derivatives_source_x} with $u$ replaced by $D^{\fa}_xD^{\fb}_yu$. We note that the argument used to prove \eqref{eq:Derivatives_source_x} with $u$ adapts immediately to $D^{\fa}_xD^{\fb}_yu$ because it only uses the fact that, for all $1\leq i\leq n_0$, the weight $b_i$ of the operator $L$ is constant along $\{x_i=0\}$, which is also true for the weight $b_i+\fa_i$ of the operator $L^{(\fa,\fb)}$. Thus, we obtain that the functions $h^{(\fa,\fb)+\fe_{i_0}}$ and $h^{(\fa,\fb)+\ff_{l_0}}$ are linear combinations with smooth coefficients of at most the following derivatives:
\begin{equation}
\label{eq:Source_function_higher_h}
\begin{aligned}
&x_ix_jD^{\fa+\fe_i+\fe_j}_xD^{\fb}_yu,\quad x_iD^{\fa+\fe_i}_xD^{\fb+\ff_l}_yu,\quad D^{\fa}_xD^{\fb+\ff_l+\ff_k}_yu,
\quad D^{\fa}_xD^{\fb+\ff_l}_yu,\quad D^{\fa}_xD^{\fb}_yu,\\
&x_iD^{\fa+\fe_i}D^{\fb}_yu,\quad\hbox{ if } 1\leq i\leq n_0,
\quad\hbox{ and }\quad
D^{\fa+\fe_i}_xD^{\fb}_yu,\quad\hbox{ if } n_0+1\leq i\leq n.
\end{aligned}
\end{equation}
for all $1\leq i,j\leq n$ and $1\leq l,k\leq m$. From \eqref{eq:Derivative_a_b} and \eqref{eq:Equation_higher_derivative}, we obtain that
\begin{align*}
\left(\partial_t- L^{(\fa+\fe_{i_0},\fb)}\right) D^{\fa+\fe_{i_0}}_xD^{\fb}_yu = D^{\fe_{i_0}}_xg^{(\fa,\fb)} - h^{(\fa+\fe_{i_0},\fb)},\\ 
\left(\partial_t- L^{(\fa,\fb+\ff_{l_0})}\right) D^{\fa}_xD^{\fb+\ff_{l_0}}_yu = D^{\ff_{l_0}}_y g^{(\fa,\fb)} - h^{(\fa,\fb+\ff_{l_0})},
\end{align*}
from where we see that
\begin{align*}
g^{(\fa+\fe_{i_0},\fb)}=D^{\fe_{i_0}}_xg^{(\fa,\fb)} - h^{(\fa+\fe_{i_0},\fb)},
\quad\quad
g^{(\fa,\fb+\ff_{l_0})}=D^{\ff_{l_0}}_yg^{(\fa,\fb)} - h^{(\fa,\fb)+\ff_{l_0}}.
\end{align*}
Using the fact that $g^{\fe_{i}}$ and $g^{\ff_l}$ is a linear combination with smooth coefficients of the derivatives in \eqref{eq:Derivatives_source_x} and that 
$h^{(\fa+\fe_{i_0},\fb)}$ and $h^{(\fa,\fb+\ff_{l_0})}$ are linear combinations of the terms in \eqref{eq:Source_function_higher_h}, we can use the preceding identities to prove inductively the form \eqref{eq:Source_function_higher} of the source functions 
$g^{(\fa+\fe_{i_0},\fb)}$ and $g^{(\fa,\fb+\ff_{l_0})}$. We omit the details as they are very tedious, though elementary to establish. This completes the proof.
\end{proof}

We now use Lemma \ref{lem:First_order_derivatives}, \ref{lem:Improved_first_order_derivatives}, and \ref{lem:Equation_higher_order_derivatives} to prove a weaker version of Theorem \ref{thm:Regularity_Sobolev}, in which we assume that the local weak solution is smooth. 

\begin{prop}[Higher-order derivative estimates]
\label{prop:Higher_order_derivatives}
Assume that the hypotheses of Theorem \ref{thm:Regularity_Sobolev} hold and that the local weak solution $u$ belongs to $C^{\infty}([0,\infty)\times \bar \cB_R)$ and that, for all $(\fa,\fb)\in\NN^n\times\NN^m$ and for all $0<T_0<T$, the derivative $D^{\fa}_xD^{\fb}_yu$ belongs to $L^2((T_0,T);H^1(B_R;d\mu_{\fa}))$. Then for all $0<t<T$ and $0<r<R<1$ there is a positive constant, 
$C=C(\fa,\fb,L,r,R,t,T)$, such that
\begin{equation}
\label{eq:Higher_order_derivatives}
\begin{aligned}
&\sup_{s\in[t,T]}
\|D^{\fa}_xD^{\fb}_yu(s)\|^2_{L^2(\cB_r;d\mu_{\fa})}\\
&+ \int_t^T \left(\sum_{i=1}^n\|D^{\fa+\fe_i}_xD^{\fb}_yu(s)\|^2_{L^2(\cB_r;d\mu_{\fa+\fe_i})}
+ \sum_{l=1}^m \|D^{\fa}_xD^{\fb)+\ff_l}_yu(s)\|^2_{L^2(\cB_r;d\mu_{\fa})}\right)\,ds\\
&\leq C\left(\|u(0)\|^2_{L^2(\cB_R;d\mu)}+\|u\|^2_{L^2((0,T);L^2(\cB_R;d\mu))}\right).
\end{aligned}
\end{equation}
\end{prop}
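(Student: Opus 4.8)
The plan is to prove \eqref{eq:Higher_order_derivatives} by induction on the total order $N:=|\fa|+|\fb|$, applying the first-order energy estimate of Lemma \ref{lem:First_order_derivatives} to the equation \eqref{eq:Equation_higher_derivative} satisfied by $D^{\fa}_xD^{\fb}_yu$, and absorbing the top-order error terms in the same way as in the proof of Lemma \ref{lem:Improved_first_order_derivatives}, one order higher. The base cases are immediate: $N=0$ is \eqref{eq:First_order_derivatives} with $g=0$, and $N=1$ is contained in \eqref{eq:Improved_first_order_derivatives} once one keeps only the summands indexed by order-$1$ multi-indices on the left. So fix $N\ge 2$, assume \eqref{eq:Higher_order_derivatives} for all multi-indices of total order at most $N-1$, and let $(\fa,\fb)$ have order $N$.

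By Lemma \ref{lem:Equation_higher_order_derivatives}, $D^{\fa}_xD^{\fb}_yu$ solves $(\partial_t-L^{(\fa,\fb)})D^{\fa}_xD^{\fb}_yu=g^{(\fa,\fb)}$ on $(0,T)\times\cB_R$, where $L^{(\fa,\fb)}$ has the form \eqref{eq:Operator}, satisfies Assumption \ref{assump:Coeff}, and carries the weights $\{b_i+\fa_i\}$, so that its weighted measure in the sense of \eqref{eq:Weight_for_higher_order_Sobolev_spaces} is precisely $d\mu_{\fa}$. I would apply Lemma \ref{lem:First_order_derivatives}, in the cutoff form \eqref{eq:First_order_derivatives_cutoff}, to this equation, with operator $L^{(\fa,\fb)}$, measure $d\mu_{\fa}$, source $g^{(\fa,\fb)}$, and a cutoff $\varphi$ as in \eqref{eq:Cutoff_Sobolev_est} for some $r<r_0<R$. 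Combined with the time-shift-and-integrate device from the proof of Lemma \ref{lem:Improved_first_order_derivatives} (which replaces the uncontrolled initial value $D^{\fa}_xD^{\fb}_yu(0)$ by an integral of $\|D^{\fa}_xD^{\fb}_yu(s)\|^2$ in $s$, at the cost of a factor $s$ in time, and then bootstraps the time-$T$ bound to a $\sup_{s\in[t,T]}$ bound), this controls $\sup_{s\in[t,T]}\|D^{\fa}_xD^{\fb}_yu(s)\|^2_{L^2(\cB_r;d\mu_{\fa})}$ together with the order-$(N+1)$ integral on the left of \eqref{eq:Higher_order_derivatives} by a constant times the sum of (a) $\int_0^T\|D^{\fa}_xD^{\fb}_yu(s)\|^2_{L^2(\cB_{r_0};d\mu_{\fa})}\,ds$, arising from the initial-value term and from the $|\nabla\varphi|^2$ term, and (b) $\eps\int_0^T\|\varphi g^{(\fa,\fb)}(s)\|^2_{L^2(\cB_R;d\mu_{\fa})}\,ds$.

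Contribution (a) is precisely the order-$N$ integral term that appears in \eqref{eq:Higher_order_derivatives} for the order-$(N-1)$ multi-index obtained by removing one derivative from $(\fa,\fb)$, so it is bounded by $C(\|u(0)\|^2_{L^2(\cB_R;d\mu)}+\|u\|^2_{L^2((0,T);L^2(\cB_R;d\mu))})$ by the inductive hypothesis (used with radius $r_0$). For (b), I would estimate each summand of $g^{(\fa,\fb)}$ in the list \eqref{eq:Source_function_higher} using the pointwise weight comparisons $|x_ix_j D^{\tilde\fa}_xD^{\tilde\fb}_yu|^2\,d\mu_{\fa}\le C\,|D^{\tilde\fa}_xD^{\tilde\fb}_yu|^2\,d\mu_{\tilde\fa}$, $|x_i D^{\tilde\fa}_xD^{\tilde\fb}_yu|^2\,d\mu_{\fa}\le C\,|D^{\tilde\fa}_xD^{\tilde\fb}_yu|^2\,d\mu_{\tilde\fa}$, and $|D^{\fa'}_xD^{\tilde\fb}_yu|^2\,d\mu_{\fa}\le C\,|D^{\fa'}_xD^{\tilde\fb}_yu|^2\,d\mu_{\fa'}$, all valid on $\cB_R$ because $\fa'\le\fa$ componentwise and $0<x_i\le R<1$ make the $x$-exponents line up. Every resulting term is of the form $\|D^{\tilde\fa}_xD^{\tilde\fb}_yu\|^2_{L^2(\cB_R;d\mu_{\tilde\fa})}$ with $|\tilde\fa|+|\tilde\fb|\le N+1$. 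Those of order at most $N$ are, after writing $\tilde\fa=\fc+\fe_{i_1}$ (or peeling one $\ff_l$ off $\tilde\fb$), among the left-hand terms of \eqref{eq:Higher_order_derivatives} at order at most $N-1$, hence controlled by the inductive hypothesis; those of order $N+1$ are, after summing the whole inequality over all order-$N$ multi-indices $(\fa,\fb)$, dominated with bounded combinatorial multiplicity by the order-$(N+1)$ integral terms appearing on the left-hand side, and are absorbed by choosing $\eps$ small. Summing over all order-$N$ multi-indices, carrying out this absorption, and collecting the estimates yields \eqref{eq:Higher_order_derivatives} for every order-$N$ multi-index, which closes the induction.

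The delicate point is exactly this bookkeeping in (b): one must check that, after the weight comparisons, every order-$(N+1)$ term produced by $g^{(\fa,\fb)}$ genuinely reappears — up to a harmless multiplicity and the small factor $\eps$ — among the order-$(N+1)$ integral terms on the left-hand side of the summed inequality, so that the absorption is legitimate, and that the remaining order-$\le N$ debris is literally one of the terms already bounded by \eqref{eq:Higher_order_derivatives} at a strictly lower order. Once the matching of the $x$-weights is verified, the rest is a routine repetition of the energy arguments carried out for Lemmas \ref{lem:First_order_derivatives} and \ref{lem:Improved_first_order_derivatives}.
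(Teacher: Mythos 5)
Your overall strategy — induct on the total order $N=|\fa|+|\fb|$, apply the first-order energy estimate of Lemma \ref{lem:First_order_derivatives} to the equation from Lemma \ref{lem:Equation_higher_order_derivatives}, use the time-shift-and-integrate device from Lemma \ref{lem:Improved_first_order_derivatives} to eliminate the initial-value term, and then close the induction by absorbing the top-order pieces of $\int_t^T\|\varphi g^{(\fa,\fb)}\|^2_{L^2(\cB_R;d\mu_\fa)}\,ds$ — is the paper's strategy, and the base cases and the treatment of (a) are fine. The gap is in step (b), in the claim that \emph{every} summand of $g^{(\fa,\fb)}$ can be brought to a term $\|D^{\tilde\fa}_xD^{\tilde\fb}_yu\|^2_{L^2(\cB_R;d\mu_{\tilde\fa})}$ of order at most $N+1$ by a pointwise weight comparison on $\cB_R$.

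This is false for the summands in \eqref{eq:Source_function_higher} of the form $D^{\fa'+\fe_i}_xD^{\fb''}_yu$ and $D^{\fa''+\fe_i}_xD^{\fb'}_yu$ with $n_0+1\leq i\leq n$, which are precisely the ones that carry \emph{no} $x_i$-prefactor. Your comparison $|D^{\fa'+\fe_i}_xD^{\fb''}_yu|^2\,d\mu_{\fa}\le C\,|D^{\fa'+\fe_i}_xD^{\fb''}_yu|^2\,d\mu_{\fa'+\fe_i}$ requires $\fa\ge\fa'+\fe_i$ componentwise, i.e.\ $\fa_i>\fa'_i$; but when $\fa'=\fa-\fe_k$ with $k\neq i$ — a configuration that genuinely arises, e.g.\ from the $h^{(\fa,\fb)+\fe_{i_0}}$ terms $D^{\fa+\fe_i}_xD^{\fb}_yu$ in the inductive construction of Lemma \ref{lem:Equation_higher_order_derivatives} — we have $\fa'_i=\fa_i$, and near $\{x_i=0\}$ the exponent of $x_i$ in $d\mu_{\fa}$ is $b_i+\fa_i-1$ while in $d\mu_{\fa'+\fe_i}$ it is $b_i+\fa_i$. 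Since $x_i\le R<1$, this makes $d\mu_{\fa}$ strictly heavier, so the pointwise inequality goes the \emph{wrong} way, and no choice of $\eps$, no combinatorial bookkeeping, and no absorption into the left-hand side can repair it, because the discrepancy is an unbounded ratio $x_i^{-1}$ near the face $\{x_i=0\}$, not a bounded multiplicity. The paper's proof does not use a pointwise comparison for these terms: it splits $g^{(\fa,\fb)}=g_1^{(\fa,\fb)}+g_2^{(\fa,\fb)}$, handles $g_1$ exactly as you propose, and handles $g_2$ — the terms above — with the Hardy-type, integration-by-parts inequality of Lemma \ref{lem:Bad_first_derivative}, which uses $b_i\ge\beta_0>0$ for $n_0+1\le i\le n$ to trade the $x_i$-weight deficit for one more derivative, landing on a quantity measured against $d\mu_{\fa+\fe_i}$ that \emph{can} then be compared pointwise and fed either into $I_{N-1}$ or into the absorbable top-order block. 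That lemma is essential and is missing from your argument, so the induction as you set it up does not close.
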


\begin{proof}
By Lemma \ref{lem:Equation_higher_order_derivatives}, we know that $D^{\fa}_xD^{\fb}_yu$ is a local weak solution to equation 
\eqref{eq:Equation_higher_derivative}, and so we can apply estimate \eqref{eq:First_order_derivatives_cutoff} to $D^{\fa}_xD^{\fb}_yu$ on the interval $[t,T]$, with $0<t<T$, to obtain that
\begin{equation}
\label{eq:Eq_for_induction_1}
\begin{aligned}
&\|D^{\fa}_xD^{\fb}_yu(T)\varphi\|^2_{L^2(\cB_R;d\mu_{\fa})}\\
&+ \int_t^T \left(\sum_{i=1}^n\|D^{\fa+\fe_i}_xD^{\fb}_yu(s)\varphi\|^2_{L^2(\cB_R;d\mu_{\fa+\fe_i})}
+ \sum_{l=1}^m \|D^{\fa}_xD^{\fb+\ff_l}_yu(s)\varphi\|^2_{L^2(\cB_R;d\mu_{\fa})}\right)\,ds\\
&\leq C\left(\|D^{\fa}_xD^{\fb}_yu(t)\varphi\|^2_{L^2(\cB_R;d\mu_{\fa})} 
+ \int_t^T \|D^{\fa}_xD^{\fb}_yu(s)\|^2_{L^2(\cB_{r_0};d\mu_{\fa})}\, ds\right)\\
&\quad + \eps\int_t^T \int_{\cB_R} |g^{(\fa,\fb)}(s)|^2\varphi^2 \,d\mu_{\fa}\,ds,
\end{aligned}
\end{equation}
for all $\eps>0$. For all $N\geq 1$, we let
\begin{equation}
\label{eq:Definition_I_N}
\begin{aligned}
I_N&:= \sum_{\stackrel{\fa\in\NN^n,\,\fb\in\NN^m}{|\fa|+|\fb|\leq N}} 
\int_t^T \left(\sum_{i=1}^n\|D^{\fa+\fe_i}_xD^{\fb}_yu(s)\varphi\|^2_{L^2(\cB_R;d\mu_{\fa+\fe_i})}\right.\\
&\qquad\qquad\left.
+ \sum_{l=1}^m \|D^{\fa}_xD^{\fb+\ff_l}_yu(s)\varphi\|^2_{L^2(\cB_R;d\mu_{\fa})}\right)\,ds.
\end{aligned}
\end{equation}
We will prove by induction on $N$ that
\begin{equation}
\label{eq:Induction_I_N}
I_N \leq C \left(\|u(0)\|^2_{L^2(\cB_R;d\mu)} + \|u\|^2_{L^2((0,T);L^2(\cB_R;d\mu))}\right),
\end{equation}
where $C=C(N,L,r,R,t,T)$ is a positive constant. When $N=0$, inequality \eqref{eq:Induction_I_N} follows immediately from \ref{eq:First_order_derivatives_cutoff}. Assume that inequality \eqref{eq:Induction_I_N} holds for $N-1$. We want to prove that it also holds for $N$. Let $(\fa,\fb)\in\NN^n\times\NN^m$ be such that $|\fa|+|\fb|=N$. We write the source function $g^{(\fa,\fb)}=g^{(\fa,\fb)}_1+g^{(\fa,\fb)}_2$, where the summands are chosen as follows based on the fact that $g^{(\fa,\fb)}$ is a linear combination with smooth coefficients of the functions enumerated in \eqref{eq:Source_function_higher}. We choose the function $g^{(\fa,\fb)}_1$ such that it contains a linear combination with smooth coefficients of the derivatives:
\begin{align*}
&x_ix_jD^{\fa'+\fe_i+\fe_j}_xD^{\fb''}_yu,\quad x_iD^{\fa'+\fe_i}_xD^{\fb''+\ff_l}_yu,
\quad D^{\fa'}_xD^{\fb''+\ff_l+\ff_k}_yu,\quad D^{\fa'}_xD^{\fb''+\ff_l}_yu,\quad D^{\fa'}_xD^{\fb''}_yu,\\
&x_ix_jD^{\fa''+\fe_i+\fe_j}_xD^{\fb'}_yu,\quad x_iD^{\fa''+\fe_i}_xD^{\fb'+\ff_l}_yu,
\quad D^{\fa''}_xD^{\fb'+\ff_l+\ff_k}_yu,\quad D^{\fa''}_xD^{\fb'+\ff_l}_yu,\quad D^{\fa''}_xD^{\fb'}_yu,\\
&x_i D^{\fa'+\fe_i}_xD^{\fb''}u,\quad x_i D^{\fa''+\fe_i}_xD^{\fb'}_yu,\quad\hbox{ if } 1\leq i\leq n_0. 
\end{align*}
We choose the function $g^{(\fa,\fb)}_2$ such that it contains a linear combination with smooth coefficients of the derivatives:
$$
D^{\fa'+\fe_i}_xD^{\fb''}u,\quad D^{\fa''+\fe_i}_xD^{\fb'}_yu,\quad\hbox{ if } n_0+1\leq i\leq n.
$$
In the expressions of the functions $g^{(\fa,\fb)}_i$, for $i=1,2$, we assume that $\fa'<\fa$, $\fa''\leq\fa$, $\fb'<\fb$, and $\fb''\leq \fb$. Note that from the definition of $g_1^{(\fa,\fb)}$ and of $I_N$ in \eqref{eq:Induction_I_N}, there is a positive constant, 
$C=C(L,N,m,n)$, such that
\begin{equation}
\label{eq:Summand_1}
\int_t^T \int_{\cB_R} |g^{(\fa,\fb)}_1(s)|^2\varphi^2 \,d\mu_{\fa}\,ds \leq C I_{N-1}.
\end{equation}
We next estimate the term $g^{(a,b)}_2$. Let $i$ be such that $n_0+1\leq i\leq n$ and $\fa'+\fe_i\neq \fa$. Applying inequality \eqref{eq:Bad_first_derivative} to $D^{\fa'+\fe_i}_xD^{\fb''}_yu$ instead of $u$, with $d\mu$ replaced by $d\mu_{\fa}$, gives us that
\begin{align*}
\int_{\cB_R}  |D^{\fa'+\fe_i}_xD^{\fb''}_yu(s)|^2 \varphi^2\,d\mu_{\fa}
&\leq 
\int_{\cB_R}\left(\sum_{j=1}^nx_j|D^{\fa'+\fe_i+\fe_j}_xD^{\fb''}_yu(s)|^2 
+\sum_{l=1}^m|D^{\fa'+\fe_i}_xD^{\fb''+\ff_l}_yu(s)|^2\right)\varphi^2\,d\mu_{\fa+\fe_i}\\
&\quad + C\int_{\cB_R}\left(\varphi^2+|\nabla\varphi|^2\right)|D^{\fa'+\fe_i}_xD^{\fb''}_yu(s)|^2\, d\mu_{\fa+\fe_i},
\end{align*}
and the induction hypothesis \eqref{eq:Induction_I_N} applied to the right-hand side of the preceding inequality yields
\begin{align*}
\int_t^T\int_{\cB_R}  |D^{\fa'+\fe_i}_xD^{\fb''}_yu(s)|^2 \varphi^2\,d\mu_{\fa}\,ds
&\leq 
C \left(\|u(0)\|^2_{L^2(\cB_R;d\mu)} + \|u\|^2_{L^2((0,T);L^2(\cB_R;d\mu))}\right).
\end{align*}
When $i$ is such that $n_0+1\leq i\leq n$ and $\fa'+\fe_i = \fa$, it follows from the definition of $I_N$ in \eqref{eq:Induction_I_N} that
\begin{align*}
\int_t^T\int_{\cB_R}  |D^{\fa'+\fe_i}_xD^{\fb''}_yu(s)|^2 \varphi^2\,d\mu_{\fa}\,ds
&\leq CI_{N-1}.
\end{align*}
The preceding argument applied to $D^{\fa'+\fe_i}_xD^{\fb''}_yu$ can also be applied to $DD^{\fa''+\fe_i}_xD^{\fb'}_yu$ in order to obtain the preceding inequality with $D^{\fa'+\fe_i}_xD^{\fb''}_yu$ replaced by $D^{\fa''+\fe_i}_xD^{\fb'}_yu$ on the left-hand side. Hence, using the expression of the function $g^{(\fa,\fb)}_3$, we obtain that
\begin{equation}
\label{eq:Summand_3}
\int_t^T\int_{\cB_R}  |g^{(\fa,\fb)}_3 u|^2 \varphi^2\,d\mu_{\fa}\,ds
\leq 
C \left(\|u(0)\|^2_{L^2(\cB_R;d\mu)} + \|u\|^2_{L^2((0,T);L^2(\cB_R;d\mu))}\right).
\end{equation}
Using estimates \eqref{eq:Summand_1}, and \eqref{eq:Summand_3} in \eqref{eq:Eq_for_induction_1}, together with definition \eqref{eq:Definition_I_N} of $I_N$, it follows that
\begin{align*}
\sum_{\stackrel{\fa\in\NN^n,\,\fb\in\NN^m}{|\fa|+|\fb|\leq N}} \|D^{\fa}_xD^{\fb}_yu(T)\varphi\|^2_{L^2(\cB_R;d\mu_{\fa})} + I_N
&\leq C\sum_{\stackrel{\fa\in\NN^n,\,\fb\in\NN^m}{|\fa|+|\fb|\leq N}}\|D^{\fa}_xD^{\fb}_yu(t)\varphi\|^2_{L^2(\cB_R;d\mu_{\fa})} 
+ C I_{N-1}  \\
&\quad + \eps I_N+ \eps \left(\|u(0)\|^2_{L^2(\cB_R;d\mu)} + \|u\|^2_{L^2((0,T);L^2(\cB_R;d\mu))}\right).
\end{align*} 
Choosing $\eps=1/2$ and applying the induction hypothesis \eqref{eq:Induction_I_N} to $I_{N-1}$, we obtain
\begin{align*}
\sum_{\stackrel{\fa\in\NN^n,\,\fb\in\NN^m}{|\fa|+|\fb|\leq N}} \|D^{\fa}_xD^{\fb}_yu(T)\varphi\|^2_{L^2(\cB_R;d\mu_{\fa})} + I_N
&\leq C\sum_{\stackrel{\fa\in\NN^n,\,\fb\in\NN^m}{|\fa|+|\fb|\leq N}}\|D^{\fa}_xD^{\fb}_yu(t)\varphi\|^2_{L^2(\cB_R;d\mu_{\fa})} \\
&\quad + C \left(\|u(0)\|^2_{L^2(\cB_R;d\mu)} + \|u\|^2_{L^2((0,T);L^2(\cB_R;d\mu))}\right).
\end{align*} 
For $T_0\in (0,T)$, we integrate the preceding inequality in the $t$-variable on the interval $[T_0,T]$ and we obtain
\begin{align*}
&(T-T_0)\sum_{\stackrel{\fa\in\NN^n,\,\fb\in\NN^m}{|\fa|+|\fb|\leq N}}\|D^{\fa}_xD^{\fb}_yu(T)\varphi\|^2_{L^2(\cB_R;d\mu_{\fa})} 
+ (T-T_0)I_N\\
&\quad\leq C\sum_{\stackrel{\fa\in\NN^n,\,\fb\in\NN^m}{|\fa|+|\fb|\leq N}}
\int_{T_0}^T\|D^{\fa}_xD^{\fb}_yu(t)\varphi\|^2_{L^2(\cB_R;d\mu_{\fa})} \,dt\\
&\quad\quad+C(T-T_0) \left(\|u(0)\|^2_{L^2(\cB_R;d\mu)} + \|u\|^2_{L^2((0,T);L^2(\cB_R;d\mu))}\right).
\end{align*}
Applying again the induction hypothesis \eqref{eq:Induction_I_N}  to the first term on the right-hand side of the preceding inequality, we obtain estimates \eqref{eq:Higher_order_derivatives}, for all $(\fa,\fb)\in\NN^n\times\NN^m$ such that $|\fa|+|\fb|=N$, and \eqref{eq:Induction_I_N}. This completes the proof.
\end{proof}

We have the following approximation result with smooth functions.

\begin{lem}[Approximation with smooth function]
\label{lem:Approx_smooth}
Assume that the operator $L$ satisfies condition \eqref{eq:Operator_adapted_system} and Assumption \ref{assump:Coeff}. Let $0<r<R<1$, $T>0$, $f\in L^2(\cB_R; d\mu)$, and let $u$ be a local weak solution to the initial-value problem \eqref{eq:Initial_value_problem_local}. Then there are sequences of smooth functions, 
$\{f_k\}_{k\in\NN}\subset C^{\infty}_c(\bar\cB_r\backslash\partial^T \cB_r)$ and $\{u_k\}_{k\in\NN}\subset C^{\infty}_c([0,T]\times\bar\cB_r)$, that satisfy the following properties. For all $k\in\NN$, the function $u_k$ is a local weak solution to
\begin{equation}
\label{eq:Problem_k}
\begin{aligned}
\left\{\begin{array}{rl}
\partial_t u_k-L u_k= 0 & \hbox{ on } (0, T)\times \cB_r,\\ 
 u_k(0) = f_k& \hbox{ on } \cB_r, 
\end{array} \right.
\end{aligned}
\end{equation}
and has the property that
\begin{align}
\label{eq:u_k_boundary}
&u_k = 0\quad\hbox{ on } (0,T)\times (\partial\cB_r\cap\{x_i=0\}),\quad\forall\, 1\leq i\leq n_0,\\
\label{eq:Space_for_derivatives}
&D^{\fa}_xD^{\fb}_yu_k\in L^2((0,T); H^1(\cB_r;d\mu_{\fa})),\quad\forall\,k\in\NN,\quad\forall\,(\fa,\fb)\in\NN^n\times\NN^m.
\end{align}
Moreover, there is a positive constant $C$ such that
\begin{equation}
\label{eq:u_k_boundedness}
\sup_{k\in\NN}\|u_k\|_{L^2((0,T);L^2(\cB_r;d\mu))} + \sup_{k\in\NN}\|f_k\|_{L^2(\cB_r;d\mu)} \leq C,
\end{equation}
and, as $k$ tends to $\infty$, we have that 
\begin{align}
\label{eq:f_k_conv}
f_k \rightarrow f &\quad\hbox{ strongly in } L^2(\cB_r;d\mu),\\
\label{eq:u_k_conv}
u_k \rightarrow u &\quad\hbox{ strongly in } L^2((0,T); H^1(\cB_r;d\mu)),\\
\label{eq:u_k_time_conv}
\frac{du_k}{dt} \rightarrow \frac{du}{dt}&\quad\hbox{ strongly in } L^2((0,T); H^{-1}(\cB_r;d\mu)),\\
\label{eq:u_k_derivatives_conv}
D^{\fa}_xD^{\fb}_y u_k \rightarrow D^{\fa}_xD^{\fb}_yu &\quad\hbox{ pointwise on } (0,T)\times\cB_r,
\end{align}
for all $\fa\in\NN^n$ and $\fb\in\NN^m$.
\end{lem}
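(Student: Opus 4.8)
The plan is to produce the approximating sequence by first localising $u$ with a cutoff, then regularising the resulting, now globally defined, function by combining a Friedrichs mollification in the $(t,y)$-variables with the explicit one-dimensional model semigroups in each $x_i$-variable, and finally restoring the homogeneous equation by subtracting the solution of an inhomogeneous Dirichlet problem whose source is smooth and tends to $0$ in $L^2((0,T);L^2(\cB_r;d\mu))$. All the convergence assertions are then read off from the energy estimate \eqref{eq:Energy_estimate} on the box $\cB_r$, the first-order bound of Lemma \ref{lem:First_order_derivatives}, and classical interior parabolic regularity. The two delicate points are that the regularisation must preserve the homogeneous Dirichlet condition along the tangent faces and must perturb the equation only by an error that vanishes in the limit; both are arranged by freezing the drift coefficients $b_i$ at their boundary values in the one-dimensional factors, so that the regularisation commutes exactly with the associated frozen-coefficient model operator, and by using that for $1\le i\le n_0$ the one-dimensional Bessel semigroup with vanishing drift is absorbed at the endpoint $x_i=0$.

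Concretely, I would fix $r<r_1<r_0<R$, choose $\varphi\in C^\infty_c(\bar\cB_{r_0})$ with $\varphi\equiv1$ on $\bar\cB_{r_1}$, and set $w:=\varphi u$; then $w$ is a local weak solution of $\partial_t w-Lw=h$ on $(0,T)\times S_{n,m}$, with $h:=-[L,\varphi]u$ supported in $\bar\cB_{r_0}\setminus\bar\cB_{r_1}$, and Lemma \ref{lem:First_order_derivatives} gives $h\in L^2((0,T);L^2(S_{n,m};d\mu))$, while $w=u$ on $(0,T)\times\cB_{r_1}$. After extending $w$ across $t=0$ by its initial value (and across $t=T$ by continuation of the solution), I set $w_k:=\Phi_k w$, where $\Phi_k$ mollifies in $(t,y)$ at scale $1/k$ and applies in each $x_i$ the model semigroup $e^{\tau_k(x_i\partial_{x_i}^2+b_i(0)\partial_{x_i})}$ with $\tau_k\downarrow0$. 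Since the one-dimensional squared-Bessel kernel is smooth up to the endpoint for positive time and, for $b_i(0)=0$, vanishes there, $w_k$ is smooth on $[0,T]\times\bar\cB_{r_0}$ with $w_k=0$ on $\{x_i=0\}$ for $1\le i\le n_0$; arranging $\Phi_k$ to be uniformly bounded on $L^2((0,T);L^2(\cB_{r_0};d\mu))$ and on the associated weighted $H^1$-space and to converge strongly to the identity there, one gets $w_k\to w$ in $L^2((0,T);H^1(\cB_r;d\mu))$ and locally uniformly with all $(x,y)$-derivatives on $(0,T)\times\cB_r$. The function $w_k$ solves $\partial_t w_k-Lw_k=h_k$ with $h_k:=\Phi_k h+[\partial_t-L,\Phi_k]w$; because $\Phi_k$ commutes with the operator obtained from $L$ by freezing all coefficients at the origin, each term in the commutator carries either a coefficient of $L$ that vanishes at the origin or the factor $\Phi_k-\mathrm{Id}$, and, controlling the logarithmic terms of the vector field \eqref{eq:V} as in the proof of Lemma \ref{lem:First_order_derivatives}, one obtains $h_k\to0$ in $L^2((0,T);L^2(\cB_r;d\mu))$.

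Next I let $v_k\in\cF((0,T)\times\cB_r)$ be the weak solution of the parabolic Dirichlet problem on $\cB_r$ with source $h_k$ and zero initial datum, and set $u_k:=w_k-v_k$, $f_k:=w_k(0)$. Then $u_k$ is a local weak solution of \eqref{eq:Problem_k}; the datum $f_k$ is smooth, vanishes on the tangent faces, and converges to $\varphi f=f$ in $L^2(\cB_r;d\mu)$ by strong continuity of $\Phi_k$ (that $C^\infty_c(\bar\cB_r\setminus\partial^T\cB_r)$ is dense in $L^2(\cB_r;d\mu)$ also follows by truncating away from $\{x_i=0\}$, $i\le n_0$, where the only singular factors $x_i^{-1}\,dx_i$ of $d\mu$ sit, and then mollifying), which is \eqref{eq:f_k_conv}. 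Smoothness of $u_k$ reduces to that of $v_k$, and since $h_k$ is smooth one may invoke the boundary regularity of the generalised Kimura heat flow for the inhomogeneous problem with smooth right-hand side from \cite{Epstein_Mazzeo_annmathstudies}; this gives $u_k\in C^\infty([0,T]\times\bar\cB_r)$, and, $u_k$ vanishing to first order on the tangent faces, also \eqref{eq:u_k_boundary} and \eqref{eq:Space_for_derivatives}. The energy estimate \eqref{eq:Energy_estimate} on $\cB_r$ gives $\|v_k\|_{L^2((0,T);H^1(\cB_r;d\mu))}+\|dv_k/dt\|_{L^2((0,T);H^{-1}(\cB_r;d\mu))}\le C\|h_k\|_{L^2((0,T);L^2(\cB_r;d\mu))}\to0$, which together with the convergence of $w_k$ yields \eqref{eq:u_k_conv} and \eqref{eq:u_k_time_conv}, while \eqref{eq:u_k_boundedness} follows from $\sup_k\big(\|f_k\|_{L^2(\cB_r;d\mu)}+\|h_k\|_{L^2((0,T);L^2(\cB_r;d\mu))}\big)<\infty$. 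Finally \eqref{eq:u_k_derivatives_conv} is obtained from \eqref{eq:u_k_conv} by classical interior parabolic regularity: on compact subsets of $(0,T)\times\cB_r$ the operator $L$ is uniformly parabolic, so the $L^2$-convergence $u_k\to u$ upgrades to $C^\infty_{\mathrm{loc}}$-convergence.

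I expect the main obstacle to be the smoothness of the corrector $v_k$ up to the non-smooth boundary of $\cB_r$---equivalently, the boundary regularity of the generalised Kimura heat flow applied to a smooth source---together with the companion technical points, namely that the commutator source $h_k$ genuinely tends to $0$ in the weighted $L^2$-norm despite the logarithmic terms in the vector field \eqref{eq:V} and the non-locality of the model semigroups in the $x$-variables, and that the model semigroups can be reconciled with the $z$-dependent weights of $d\mu$. These rest on the explicit one-dimensional model kernels and on the parametrix construction of \cite{Epstein_Mazzeo_annmathstudies}, whereas the remaining steps are routine.
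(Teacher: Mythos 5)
Your plan is genuinely different from the paper's and, as written, has several substantive gaps.

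The paper's proof is much more indirect and cleaner: after cutting off to get $\bar u=\varphi u$ solving $\bar u_t-\cL\bar u=\bar g$ with $\bar g=-[L,\varphi]u$ supported in an annulus and $\bar g\equiv 0$ on $\bar\cB_{r_0}$, the authors do \emph{not} regularize $\bar u$ at all. Instead they place the cut-off problem on a compact manifold with corners $P$ carrying a generalized Kimura operator $\cL$ extending $L$, approximate the \emph{data} $(\bar f,\bar g)$ by arbitrary smooth approximants $(f_k,g_k)\in C^\infty_c$ supported away from $\partial^T P$ with $g_k\equiv0$ on $\bar\cB_{r_0}$ (pure density), invoke the Epstein--Mazzeo existence/smoothness theorem (Lemma 10.0.2 of their book) to produce smooth global solutions $u_k\in C^\infty([0,T]\times P)$, and then use Sato's restriction observation together with the uniqueness theorem for the induced Kimura problem on each tangent face $P_{i_0}$ to conclude $u_k\equiv 0$ on $[0,T]\times P_{i_0}$. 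Everything else---\eqref{eq:u_k_conv}, \eqref{eq:u_k_time_conv}, \eqref{eq:u_k_boundedness}, \eqref{eq:u_k_derivatives_conv}---then falls out of the energy estimate \eqref{eq:Energy_estimate} and interior elliptic regularity. No regularization operator, no commutator estimate, no corrector problem.

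Your approach regularizes $w=\varphi u$ directly via a tensor-product operator $\Phi_k$ of mollifications and frozen one-dimensional model semigroups, and then subtracts a corrector $v_k$. There are three places where I do not see how to close the argument without essentially re-deriving the machinery the paper avoids.
First, the claimed commutation of $\Phi_k$ with the frozen operator $L_0$ is false: your $\Phi_k$ is diagonal in the $x$-variables and commutes with $\sum_i(x_i\partial_{x_i}^2+b_i(0)\partial_{x_i})$ and with constant-coefficient $y$-operators, but it does \emph{not} commute with the frozen cross terms $x_ix_j a_{ij}(0)\partial^2_{x_ix_j}$ or $x_i c_{il}(0)\partial^2_{x_iy_l}$ that are part of $L_0$; so the commutator $h_k$ contains nonlocal terms that are not controlled merely by freezing coefficients. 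Second, the boundedness and strong convergence of $\Phi_k$ on $L^2(d\mu)$ and on $H^1(\cB_{r_0};d\mu)$ is not automatic, because the weight $x_j^{b_j(z)-1}$ has exponent depending on \emph{all} coordinates while the model kernel is symmetric only with respect to $x_j^{b_j(0)-1}\,dx_j$; the ratio $x_j^{b_j(z)-b_j(0)}$ is unbounded near $x_j=0$ whenever $b_j$ is non-constant on $\{x_j=0\}$, so a kernel estimate is genuinely needed. Third, and most seriously, you need $v_k\in C^\infty([0,T]\times\bar\cB_r)$, i.e.\ boundary smoothness for the \emph{inhomogeneous} parabolic problem on $\cB_r$ up to the corners where degenerate faces meet the uniformly elliptic far faces $\{x_i=r\}$, $\{y_l=\pm r\}$. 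The paper's Theorem \ref{thm:Boundary_reg} is for the homogeneous interior equation, and the Epstein--Mazzeo theory is for Kimura operators that degenerate at \emph{all} boundary hypersurfaces, which $L$ does not on the far faces of $\cB_r$; so neither can be invoked directly here. The paper's construction avoids this entirely by putting the smooth data on the compact manifold with corners $P$ before solving.

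So: the proposal is imaginative and the cut-off step is in the right spirit, but the regularize-then-correct strategy introduces exactly the technical difficulties (kernel estimates for the weighted boundedness, commutator control, mixed degenerate/elliptic corner regularity) that the paper's approximate-the-data approach is designed to circumvent. You should look at how the authors transfer the problem to a compact manifold with corners and then lean on the Epstein--Mazzeo well-posedness and on Sato's observation for vanishing on the tangent faces.
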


\begin{proof}
Let $P := [0,2R]^n\times[-2R,2R]^m$ and let $\cL$ be an operator defined on $P$ such that $\cL$ satisfies Assumption \ref{assump:Coeff} and $\cL v = L v$, for all function $v \in C^{\infty}(\bar \cB_R)$. Let $r<r_0<R$ and $\varphi:P \rightarrow[0,1]$ be a smooth function such that $\varphi\restrictedto_{\bar\cB_{r_0}} = 1$ and $\varphi\restrictedto_{\bar\cB_R^c} = 0$. Let 
$$
\bar u:= \varphi u,\quad\bar f:=\varphi f,\quad\bar g:=[L,\varphi]u.
$$
It follows from Lemma \ref{eq:Cutoff_local_solution} that $\bar u$ is a weak solution to equation \eqref{eq:Problem_cutoff} where we replace $\cB_R$ by $P$, i.e.
\begin{equation}
\label{eq:Problem_cutoff_u}
\begin{aligned}
\left\{\begin{array}{rl}
\bar u_t-\cL\bar u=\bar g & \hbox{ on } (0, T)\times P,\\ 
\bar u(0) = \bar f& \hbox{ on } P, 
\end{array} \right.
\end{aligned}
\end{equation}
Because $\varphi = 1$ on $\bar\cB_{r_0}$, it follows from \eqref{eq:Commutator} that $\bar g = 0$ on $\bar \cB_{r_0}$, and so using the fact that the space of functions $C^{\infty}_c([0,T]\times (P\backslash\partial^T P))$ is dense in $L^2((0,T);L^2(P;d\mu))$, we can find a sequence of functions such that $\{g_k\}_{k\in\NN}\subset C^{\infty}_c([0,T]\times (P\backslash\partial^T P))$, $g_k = 0$ on $\bar \cB_{r_0}$, and
\begin{equation}
\label{eq:g_k_conv}
g_k \rightarrow \bar g \quad\hbox{ strongly in } L^2((0,T); L^2(P;d\mu)),\quad\hbox{ as $k\rightarrow\infty$}.
\end{equation}
We recall the definition of the portion of the boundary $\partial^T P$ in \eqref{eq:Tangent_boundary}. Similarly, we can find a sequence of functions, $\{f_k\}_{k\in\NN}\subset C^{\infty}_c(P\backslash\partial^T P)$, such that
\begin{equation}
\label{eq:f_k_conv_bar_f}
f_k \rightarrow \bar f \quad\hbox{ strongly in } L^2(P;d\mu),\quad\hbox{ as $k\rightarrow\infty$},
\end{equation}
which gives \eqref{eq:f_k_conv}, since $\bar f=f$ on $\cB_r$.
Applying \cite[Lemma 10.0.2]{Epstein_Mazzeo_annmathstudies}, the parabolic problem
\begin{equation}
\label{eq:Problem_k_P}
\begin{aligned}
\left\{\begin{array}{rl}
\partial_t u_k-\cL u_k = g_k & \hbox{ on } (0, T)\times P,\\ 
u_k(0) = f_k& \hbox{ on } P, 
\end{array} \right.
\end{aligned}
\end{equation}
has a unique solution $u_k\in C^{\infty}([0,T]\times P)$. Let $1\leq i\leq n_0$, where $n_0$ is defined in \eqref{eq:n_0}, and let 
$P_{i_0}:=P\cap\{x_{i_0}=0\}$. For all smooth functions $v\in C^{\infty}(P)$, it follows by an observation of Sato, 
\cite{Sato_1978}, \cite[Lemma 2.4]{Shimakura_1981}, that $(\cL v)\restrictedto_{P_{i_0}} = \cL\restrictedto_{P_{i_0}} v$, 
where $\cL\restrictedto_{P_{i_0}}$ is an operator that satisfies Assumption \ref{assump:Coeff}. Using the fact that $g_k=0$ on $[0,T]\times P_{i_0}$ and $f_k=0$ on $P_{i_0}$, it follows that
$u_k$ is a smooth solution, when restricted to $[0,T]\times P_{i_0}$, to the parabolic problem
$$
(\partial_t-\cL\restrictedto_{P_{i_0}}) u_k = 0\hbox{ on } (0,T)\times P_{i_0}
\quad\hbox{ and }\quad
u_k(0) = 0\hbox{ on } P_{i_0}.
$$
Applying the uniqueness statement in \cite[Theorem 10.0.2]{Epstein_Mazzeo_annmathstudies}, it follows that $u_k=0$ on $[0,T]\times P_{i_0}$, which implies that \eqref{eq:u_k_boundary} holds. We also have that $D^{(\fa,\fb)} u_k = 0$ on $P_{i_0}$, for all $(\fa,\fb)\in\NN^n\times\NN^m$ such that $\fa_{i_0} = 0$. From the definition of the weight function $d\mu_{\fa}$ in \eqref{eq:Weight_for_higher_order_Sobolev_spaces}, it follows that \eqref{eq:Space_for_derivatives} holds, and so also that $u_k$ belongs to the space of functions $\cF((0,T)\times P)$. Thus, each function $u_k$ is also a weak solution to the local problem \eqref{eq:Problem_k}, where we use the fact that $g_k=0$ on $\bar \cB_r$. The global Sobolev estimates \eqref{eq:Energy_estimate} applied to the solutions $u_k$ and $\bar u$, combined with properties \eqref{eq:g_k_conv} and \eqref{eq:f_k_conv} imply properties \eqref{eq:u_k_conv} and \eqref{eq:u_k_time_conv}, while together with \eqref{eq:f_k_conv}, we obtain \eqref{eq:u_k_boundedness}. Because the operator $L$ is strictly elliptic on $\cB_r$, the standard elliptic estimates and property \eqref{eq:u_k_conv} give us that the pointwise convergence \eqref{eq:u_k_derivatives_conv} holds. This completes the proof.
\end{proof}

We conclude this section with the

\begin{proof}[Proof of Theorem \ref{thm:Regularity_Sobolev}]
Let $r<r_0<R$. Let $\{u_k\}_{k\in\NN}\in C^{\infty}([0,\infty)\times \bar\cB_{r_0})$ be the sequence of functions constructed in Lemma \ref{lem:Approx_smooth}, when we apply it with $r$ replaced by $r_0$. Properties \eqref{eq:Space_for_derivatives}, \eqref{eq:f_k_conv}, \eqref{eq:u_k_conv}, and estimate 
\eqref{eq:Higher_order_derivatives} applied to $u_k$ with $R=r_0$, gives us that, for all $\fa\in\NN^n$ and $\fb\in\NN^m$,
$\{D^{\fa}_xD^{\fb}_yu_k\}_{k\in\NN}$ is a Cauchy sequence in $L^{\infty}((0,T);L^2(\cB_r;d\mu_{\fa}))$ and there is a positive constant, $C=C(L,t,T,r,r_0)$, such that
\begin{equation*}
\sup_{s\in[t,T]}
\|D^{\fa}_xD^{\fb}_yu_k(s)\|^2_{L^2(\cB_r;d\mu_{\fa})}
\leq C\left(\|f_k\|^2_{L^2(\cB_{r_0};d\mu)}+\|u_k\|^2_{L^2((0,T);L^2(\cB_{r_0};d\mu))}\right),\quad\forall\,k\in\NN.
\end{equation*}
The preceding observations combined with properties \eqref{eq:u_k_derivatives_conv} and \eqref{eq:u_k_boundedness} yield  \eqref{eq:Regularity_Sobolev}. This completes the proof.
\end{proof}

\section{Boundary behavior of weak solutions}
\label{sec:Supremum_estimates}
In this section we give the proofs of the main results stated in \S\ref{sec:Local_sup_est} and \S\ref{sec:Global_regularity}. 

\subsection{Boundary behavior of local weak solutions}
\label{sec:Sup_est_proof}
We begin with the proof of Theorem \ref{thm:Boundary_reg}, which is a consequence of a weaker form of the supremum estimates established in Theorem \ref{thm:Sup_est} below. To state the result, we need to introduce some additional notation. For all $z\in \bar S_{n,m}$, $\rho\in\bar\RR_+^{n_0}$, 
and $r>0$, we set 
\begin{align}
\label{eq:Projection_pi}
\pi(z,\rho) &:= (\rho,x_{n_0+1},\ldots,x_n, y)\in\RR^{n+m},\\
\label{eq:Definition_cR}
\cR_{r_0}(z)&:=\prod_{i=1}^n 
(x_i, r_0)\times\prod_{l=1}^m (y_l,r_0),\quad\forall\, z\in\cB_r.
\end{align}
When $\rho=0\in\RR^{n_0}$, we write for brevity $\pi(z)$ instead of $\pi(z,0)$. For all $z\in \bar S_{n,m}$, 
$\eta\in\bar\RR_+^{n_0-1}$, $1\leq k\leq n_0$, and $r>0$, we set 
\begin{equation}
\label{eq:Projection_pi_k}
\pi_k(z,\eta) := (\eta_1,\ldots,\eta_{k-1},x_k,\eta_k,\ldots, \eta_{n_0-1},x_{n_0+1},\ldots,x_n, y)\in\RR^{n+m},
\end{equation}
When $\eta=0\in\RR^{n_0-1}$, we write for brevity $\pi_k(z)$ instead of $\pi_k(z,0)$. We can now state

\begin{thm}[Local a priori supremum estimates of solutions]
\label{thm:Sup_est}
Assume that the operator $L$ in \eqref{eq:Operator} satisfies \eqref{eq:Operator_adapted_system} and Assumption \ref{assump:Coeff}. Let $T>0$, $R\in (0,1)$, $f\in L^2(\cB_R; d\mu)$, and let $u$ be a local weak solution to the initial-value problem \eqref{eq:Initial_value_problem_local}. Then for all $0<r<r_0<R$, $0<t<T$, and $\fb\in\NN^m$, there are positive constants,
$C=C(\fb,L,r,r_0,R,t,T)$, $p=p(b,m,n)>2$, and $q=q(b,m,n)<2$, with the property that $1/p+1/q=1$, such that for all $s\in [t,T]$ and for all $z\in \cB_r$, we have that
\begin{align}
\label{eq:Sup_est_sol_1}
|D^{\fb}_y u(s,z)| &\leq C 
\left(\|f\|_{L^2(\cB_R;d\mu)} + \|u\|_{L^2((0,T);L^2(\cB_R;d\mu))}\right) \cW_{r_0}(\pi(z)) \prod_{i=1}^{n_0} x_i,
\end{align}
and for all $1\leq k\leq n_0$, we have that
\begin{align}
\label{eq:Sup_est_sol_2}
|D^{\fe_k}_xD^{\fb}_y u(s,z)| 
&\leq C \left(\|f\|_{L^2(\cB_R;d\mu)} + \|u\|_{L^2((0,T);L^2(\cB_R;d\mu))}\right) \cW_{r_0}(\pi_k(z)) 
\prod_{\stackrel{i=1}{i \neq k}}^{n_0} x_i,
\end{align}
and for all $n_0+1\leq l\leq n$, we have that
\begin{align}
\label{eq:Sup_est_sol_3}
|D^{\fe_l}_xD^{\fb}_yu(s,z)| 
&\leq C \left(\|f\|_{L^2(\cB_R;d\mu)} + \|u\|_{L^2((0,T);L^2(\cB_R;d\mu))}\right) \cW^l_{r_0}(\pi(z)) \prod_{i=1}^{n_0} x_i,
\end{align}
where we define
\begin{align}
\label{eq:Definition_cW}
\cW_{r_0}(z) &:= \left(\int_{\cR_{r_0}(z)}
\prod_{i=1}^{n_0} \xi_i^{- \frac{q}{p}} \, d\xi_i
\prod_{j=n_0+1}^{n}  \xi_j^{-b_j(\xi,\rho) \frac{q}{p}}\, d\xi_j
\prod_{l=1}^md\rho_l
\right)^{\frac{1}{q}},\\
\label{eq:Definition_cW_k}
\cW^l_{r_0}(z) &:=
\left(\int_{\cR_{r_0}(z)}
\prod_{i=1}^{n_0}  \xi_i^{- \frac{q}{p}} \, d\xi_i
\prod_{j=n_0+1}^{n} \xi_j^{-( b_j(\xi,\rho)+\delta_{jl}) \frac{q}{p}}\, d\xi_j
\prod_{l=1}^md\rho_l
\right)^{\frac{1}{q}}.
\end{align}
\end{thm}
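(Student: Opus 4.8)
The plan is to combine the weighted Sobolev estimate of Theorem \ref{thm:Regularity_Sobolev} with a weighted Sobolev embedding into $L^\infty$, exploiting the precise exponents in the measures $d\mu_\fa$. First I would reduce to a pointwise statement: fix $s\in[t,T]$ and $z\in\cB_r$, and for each multi-index we want to bound $|D^\fa_x D^\fb_y u(s,z)|$ by $\|u\|_{L^2((0,T);L^2(\cB_R;d\mu))}$ times an explicit weight. By Theorem \ref{thm:Regularity_Sobolev}, all derivatives $D^\fa_x D^\fb_y u(s)$ lie in $L^2(\cB_{r'};d\mu_\fa)$ for $r<r'<r_0$, with norms controlled by the right-hand side of \eqref{eq:Sup_est_sol_1}. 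Since the operator $L$ is strictly elliptic in the interior, classical interior parabolic estimates give $u\in C^\infty((0,T)\times\cB_R)$; the content is the boundary behavior.

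The heart of the argument is a representation of the pointwise value via the fundamental theorem of calculus: for a point $z$ near the corner, integrate $\partial_{x_1}\cdots\partial_{x_{n_0}} (D^\fb_y u)$ along the coordinate box from $z$ out to a point at distance $\sim r_0$. Concretely, since $u=0$ on the faces $\{x_i=0\}$ for $1\le i\le n_0$ (this is where the Dirichlet condition enters, via the approximation in Lemma \ref{lem:Approx_smooth}), one writes
\begin{equation*}
D^\fb_y u(s,z) = \int_{x_1}^{r_0}\!\!\cdots\int_{x_{n_0}}^{r_0} \partial_{\xi_1}\cdots\partial_{\xi_{n_0}} D^\fb_y u(s,\pi(z,\xi)-\text{shift}) \, d\xi_1\cdots d\xi_{n_0} + (\text{boundary terms at }r_0).
\end{equation*}
The boundary terms at scale $r_0$ are controlled by the $C^{k}$-interior bound on $\cB_{r_0}\setminus\cB_r$ away from the corner faces, which in turn comes from Theorem \ref{thm:Regularity_Sobolev} plus interior elliptic regularity; the $\prod x_i$ factor there must be produced by again integrating from the faces, so in fact one iterates the identity so that every term carries the derivative $\partial_{\xi_1}\cdots\partial_{\xi_{n_0}}$ acting on $u$. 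Then I would apply H\"older's inequality on the box $\cR_{r_0}(z)$ with exponents $p$ and $q$ against the measure $d\mu_\fa$ (here $\fa$ has $1$'s in the first $n_0$ slots): the factor $\prod \xi_i$ and $\prod \xi_j^{b_j}$ from $d\mu_\fa^{1/2}\cdot d\mu_\fa^{-1/2}$ split so that the $L^2(d\mu_\fa)$-norm of $\partial_{\xi_1}\cdots\partial_{\xi_{n_0}}D^\fb_y u$ (bounded by Theorem \ref{thm:Regularity_Sobolev}) pairs with the reciprocal-weight integral, which is exactly $\cW_{r_0}(\pi(z))$ as defined in \eqref{eq:Definition_cW}. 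The extra $\prod_{i=1}^{n_0} x_i$ on the right-hand side appears because, after the H\"older pairing in the $\xi_i$ variables, the remaining integration in those variables from $x_i$ to $r_0$ of $\xi_i^{-q/p}$ produces a factor that, after the $1/q$ power, behaves like $x_i^{1/q - 1 + 1/p}\cdot(\ldots) = x_i^{0}$... more carefully, the vanishing $u|_{x_i=0}=0$ gives one full power $x_i$ and the weight integral gives the rest; tracking this bookkeeping is what fixes the precise exponents $p>2$, $q<2$. The estimates \eqref{eq:Sup_est_sol_2} and \eqref{eq:Sup_est_sol_3} follow identically, except that for \eqref{eq:Sup_est_sol_2} the $x_k$-derivative is already taken so one only integrates over the remaining $n_0-1$ variables (hence $\pi_k$ and the product over $i\ne k$), and for \eqref{eq:Sup_est_sol_3} the operator $L^{\fe_l}$ has weight $b_l+1$, changing $d\mu_\fa$ to the measure giving $\cW^l_{r_0}$ in \eqref{eq:Definition_cW_k}.

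The main obstacle is choosing the exponents $p,q$ so that simultaneously (i) $\cW_{r_0}$ is finite, i.e. $q/p<1$ in the $\xi_i$-integrals and $b_j(\xi,\rho)q/p<1$ uniformly near the corner in the $\xi_j$-integrals — this forces $q$ close enough to $1$ (equivalently $p$ large) depending on $\sup b_j$, $n$, $m$, which is where $p=p(b,m,n)$ comes from — and (ii) the resulting H\"older pairing still matches the $d\mu_\fa$-weighted $L^2$ norm that Theorem \ref{thm:Regularity_Sobolev} controls, which requires that the "half" of the weight assigned to the test factor be integrable against $\xi^{q/p}$-type densities; balancing these two constraints, together with keeping track of the contribution of the positive-weight faces $n_0+1\le j\le n$ (whose $b_j$ may be large), is the delicate point. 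A further technical wrinkle is justifying the integration-by-parts representation for the genuine weak solution rather than a smooth one: I would handle this by running the argument for the smooth approximants $u_k$ from Lemma \ref{lem:Approx_smooth}, for which \eqref{eq:Space_for_derivatives} and the pointwise convergence \eqref{eq:u_k_derivatives_conv} are available, and then passing to the limit using the uniform bounds \eqref{eq:u_k_boundedness} and \eqref{eq:f_k_conv}.
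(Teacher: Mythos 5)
Your overall plan is in the right direction and correctly identifies the three ingredients the paper actually uses: the weighted Sobolev regularity of Theorem~\ref{thm:Regularity_Sobolev}, the vanishing of $u$ on the tangent faces $\{x_i=0\}$, $1\le i\le n_0$, and the approximation through Lemma~\ref{lem:Approx_smooth} to run the argument for smooth $u_k$ and pass to the limit. Two steps in the middle are, however, jumbled in a way that leaves a real gap.

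First, your integral representation is inverted. You write
$D^\fb_y u(s,z) = \int_{x_1}^{r_0}\!\cdots\int_{x_{n_0}}^{r_0}\partial_{\xi_1}\cdots\partial_{\xi_{n_0}}D^\fb_y u\,d\xi + (\text{boundary terms})$
and then try to argue the $\prod_{i=1}^{n_0}x_i$ factor appears ``more carefully'' afterwards. The representation that actually uses the Dirichlet condition is the inward one (the paper's \eqref{eq:Sup_est_sol_u_N_1}):
\begin{equation*}
D^\fb_y u(s,z)=\int_0^{x_1}\!\!\cdots\int_0^{x_{n_0}}D^{\fe}_xD^{\fb}_y u\bigl(s,\pi(z,\rho)\bigr)\,d\rho,
\end{equation*}
which has no boundary terms because $D^\fa_x D^\fb_y u=0$ on $\{x_i=0\}$ whenever $\fa_i=0$. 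This identity produces the factor $\prod_{i=1}^{n_0}x_i$ \emph{immediately} once you have a pointwise bound $|D^{\fe}_xD^{\fb}_y u(s,\cdot)|\le C\,\cN'\,\cW_{r_0}(\cdot)$ and observe that $\cW_{r_0}(\pi(z,\rho))\le\cW_{r_0}(\pi(z))$ for $0\le\rho_i\le x_i$. The outward integration over $\cR_{r_0}(z)$ with cutoffs that you write down is the correct tool, but it belongs one layer deeper: it is used to bound the \emph{full} mixed derivative $D^{\fe}_xD^{\fb}_y u$, not $D^\fb_y u$ itself. Mixing the two levels is why you have to backpedal about where the $\prod x_i$ comes from; with the correct two-layer decomposition the bookkeeping is clean.

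Second, H\"older alone does not close the loop. Theorem~\ref{thm:Regularity_Sobolev} only gives $L^2(\cB_r;d\mu_\fa)$ control of $D^{\fa}_xD^{\fb}_y u$, and your H\"older pairing with exponents $p$, $q$, $1/p+1/q=1$, requires an $L^p(d\mu_\fa)$-bound of the integrand with $p>2$. That upgrade is a separate weighted Sobolev embedding $H^1(\cB_R;d\mu_\fa)\hookrightarrow L^{p_\fa}(\cB_R;d\mu_\fa)$ (in the paper this is \cite[Theorem 3.2]{Epstein_Mazzeo_2016}, applied inside Lemma~\ref{lem:Sup_est_smooth}), and it is precisely what fixes $p=p(b,m,n)>2$ and hence $q<2$ so that the integrals defining $\cW_{r_0}$ and $\cW^l_{r_0}$ converge. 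Your proposal mentions a Sobolev embedding in one sentence but then proceeds as though splitting $d\mu_\fa^{1/2}\cdot d\mu_\fa^{-1/2}$ at the $L^2$ level is enough; with $p=q=2$ the factors $\xi_i^{-q/p}=\xi_i^{-1}$ are not integrable near $\xi_i=0$ and $\cW_{r_0}$ is infinite. Identifying the $L^2\to L^p$ step as a separate lemma is essential.

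With those two repairs your plan matches the paper's: reduce to smooth approximants via Lemma~\ref{lem:Approx_smooth}; write $D^\fb_y u$ as an iterated integral from the corner; bound the integrand $D^{\fe}_xD^{\fb}_y u$ pointwise by $\cW_{r_0}$ using cutoffed outward integration, H\"older with $(p,q)$, and the weighted Sobolev embedding to pass from $L^2$ to $L^p$; control the resulting $H^1(d\mu_\fa)$ norms by Theorem~\ref{thm:Regularity_Sobolev}; and pass to the limit using \eqref{eq:u_k_derivatives_conv}, \eqref{eq:u_k_boundedness}, and \eqref{eq:f_k_conv}. The variants \eqref{eq:Sup_est_sol_2} (integrate in $n_0-1$ of the $x_i$-variables, giving $\pi_k$) and \eqref{eq:Sup_est_sol_3} (replace $d\mu$ by $d\mu_{\fe_l}$, shifting $b_l$ by $1$ and giving $\cW^l_{r_0}$) then follow exactly as you indicate.
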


We first give the proof of Theorem \ref{thm:Sup_est} under a stronger hypothesis on the regularity of the weak solution; see Theorem \ref{thm:Sup_est_smooth}. We then employ the approximation property described in Lemma \ref{lem:Approx_smooth} to remove this unnecessary strong hypothesis. Let $\fe\in\NN^n$ be defined by
\begin{equation}
\label{eq:fe}
\fe := \fe_1+\fe_2 +\ldots+\fe_{n_0}.
\end{equation}
where the integer $n_0$ is defined in \eqref{eq:n_0}. We have

\begin{thm}[Local supremum estimates for smooth solutions]
\label{thm:Sup_est_smooth}
Assume that the operator $L$ satisfies condition \eqref{eq:Operator_adapted_system} and Assumption \ref{assump:Coeff}. Let $T>0$, $R\in (0,1)$, $f\in L^2(\cB_R; d\mu)$, and $u\in C^{\infty}([0,T]\times\bar\cB_R)$ be a weak solution to the initial-value problem \eqref{eq:Initial_value_problem_local} such that
\begin{equation}
\label{eq:Boundary_values_tangent}
u = 0\quad\hbox{ on } (0,T)\times(\partial\cB_R\cap\{x_i=0\}),\quad\forall\, 1\leq i\leq n_0,
\end{equation}
where the integer $n_0$ is defined in \eqref{eq:n_0}. Then for all $0<r<R$, $0<t<T$, and $\fb\in\NN^m$, there are positive constants
$C=C(\fb,L,r,R,t,T)$, $p=p(b,m,n)>2$, and $q=q(b,m,n)<2$, such that for all $s\in [t,T]$ and for all 
$z\in \bar\cB_r$, estimates \eqref{eq:Sup_est_sol_1}, \eqref{eq:Sup_est_sol_2}, and \eqref{eq:Sup_est_sol_3} hold.
\end{thm}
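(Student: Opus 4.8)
The strategy is to convert the a priori weighted Sobolev estimates of Theorem~\ref{thm:Regularity_Sobolev} into pointwise bounds by combining a fundamental-theorem-of-calculus argument in the tangent variables $x_1,\dots,x_{n_0}$, which extracts the factor $\prod_{i=1}^{n_0}x_i$, with a weighted Sobolev--Morrey inequality in the remaining variables, which produces the weight $\cW_{r_0}$. Fix $s\in[t,T]$, $\fb\in\NN^m$, and $r<r_0<R$. Since $u\in C^\infty([0,T]\times\bar\cB_R)$, $u$ vanishes on $\{x_i=0\}$ for $1\le i\le n_0$, and the fields $\partial_{y_l}$ are tangent to those faces, $D^\fb_yu(s,\cdot)$ vanishes there too; integrating successively in $x_1,\dots,x_{n_0}$ — each partially differentiated integrand again vanishing on the remaining faces — gives
\begin{equation*}
D^\fb_yu(s,z)=\int_0^{x_1}\!\!\cdots\!\int_0^{x_{n_0}}D^\fe_xD^\fb_yu\big(s,\xi_1,\dots,\xi_{n_0},x_{n_0+1},\dots,x_n,y\big)\,d\xi_{n_0}\cdots d\xi_1,
\end{equation*}
with $\fe$ as in~\eqref{eq:fe}, hence $|D^\fb_yu(s,z)|\le\big(\prod_{i=1}^{n_0}x_i\big)\sup_{\xi'\in[0,r_0]^{n_0}}|D^\fe_xD^\fb_yu(s,\xi',z'')|$, where $z''=(x_{n_0+1},\dots,x_n,y)$. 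Thus~\eqref{eq:Sup_est_sol_1} reduces to bounding this supremum by $C\,\cW_{r_0}(\pi(z))$ times the right-hand side. For~\eqref{eq:Sup_est_sol_3} one runs the same reduction with $u$ replaced by $D^{\fe_l}_xu$ ($l>n_0$), which still vanishes on all tangent faces and, by Lemma~\ref{lem:Equation_higher_order_derivatives}, solves an equation of the same type with weight $b_l$ shifted to $b_l+1$; this shift is exactly what upgrades $\cW_{r_0}$ to $\cW^l_{r_0}$. For~\eqref{eq:Sup_est_sol_2} one applies the argument to $D^{\fe_k}_xu$ ($k\le n_0$), which vanishes on $\{x_i=0\}$ for $i\le n_0$, $i\ne k$, but not on $\{x_k=0\}$; one therefore extracts only $\prod_{i\ne k}x_i$ and is left to bound the supremum of $D^\fe_xD^\fb_yu$ over the $\xi_i$ with $i\ne k$, with $\xi_k=x_k$ held fixed — which is why $\xi_k$ runs over $(x_k,r_0)$ in $\cW_{r_0}(\pi_k(z))$ while the other tangent variables run over $(0,r_0)$.

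The heart of the matter is the weighted Sobolev--Morrey estimate: for a smooth function $w$ on $\bar\cB_{r_0}$ there are $p=p(b,m,n)>2$ and $q=q(b,m,n)<2$ with $1/p+1/q=1$, and an integer $K=K(b,m,n)$, such that
\begin{equation*}
\sup_{\xi'\in[0,r_0]^{n_0}}|w(\xi',z'')|\le C\,\cW_{r_0}(\pi(z))\sum_{|\fa'|+|\fb'|\le K}\|D^{\fa'}_xD^{\fb'}_yw\|_{L^2(\cB_{r_0};\,d\mu^{(w)}_{\fa'})},
\end{equation*}
where $d\mu^{(w)}_{\fa'}$ is the measure adapted to the weights of the equation solved by $w$, together with the analogues involving $\cW^l_{r_0}$ or $\pi_k(z)$. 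To prove it one applies the fundamental theorem of calculus in each remaining variable: for a variable over which the supremum is taken one writes $w$ as its value at $r_0$ minus an integral, bounds $\int_{\xi}^{r_0}$ by $\int_0^{r_0}$, and pairs, via H\"older with exponents $p$ and $q$, the derivative against the relevant density (of the form $\xi_i^{-q/p}$ in the tangent directions — this uses $q/p<1$, i.e.\ $p>2$, so that $\int_0^{r_0}\xi_i^{-q/p}d\xi_i<\infty$); for a variable held at a fixed position (the transverse $\xi_j$ over $(x_j,r_0)$, the $y_l$ over $(y_l,r_0)$, and $\xi_k$ in the case of~\eqref{eq:Sup_est_sol_2}) one keeps the integral from that position to $r_0$ and pairs against $\xi_j^{-b_j(\xi,\rho)q/p}$, resp.\ $\xi_j^{-(b_j+\delta_{jl})q/p}$, resp.\ $1$. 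The product of all these paired densities, integrated over the corresponding box, is precisely the $q$-th power of $\cW_{r_0}$ (resp.\ $\cW^l_{r_0}$, resp.\ with $\pi_k(z)$). Boundary contributions — values of $w$ at points at distance $\gtrsim r_0$ from the degenerate faces — are controlled by the classical interior Sobolev embedding, where $d\mu$ is comparable with Lebesgue measure; since $\cW_{r_0}(\pi(z))$ is bounded below on $\bar\cB_r$ (the box in~\eqref{eq:Definition_cW} contains a fixed box of positive measure because $x_j<r<r_0$), these terms are absorbed. The leftover weighted-$L^p$ norms are dominated, via $\xi_j^{b_j}\le r_0\xi_j^{b_j-1}$ and finiteness of $\int\prod_{j>n_0}\xi_j^{b_j-1}d\xi_j\prod d\rho_l$, by $L^\infty$-norms, which in turn are controlled by finitely many weighted $L^2$-norms of higher-order derivatives of $w$ via a Sobolev embedding for the model operators $\xi_j\partial^2_{\xi_j}+b_j(\xi,\rho)\partial_{\xi_j}$ and $\xi_i\partial^2_{\xi_i}+\partial_{\xi_i}$; the order $K$ and admissible $p$ degrade with $\max_j\|b_j\|_{C^0}$, which is the source of the dependence $p=p(b,m,n)$.

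To finish, take $w=D^\fe_xD^\fb_yu(s,\cdot)$ (or $D^{\fe+\fe_l}_xD^\fb_yu(s,\cdot)$, etc.). By Lemma~\ref{lem:Equation_higher_order_derivatives}, $w$ solves an equation of the type~\eqref{eq:Operator} with weights $b_i+\fe_i$ (resp.\ $b_i+\fe_i+\delta_{il}$), so $d\mu^{(w)}_{\fa'}=d\mu_{\fe+\fa'}$ (resp.\ $d\mu_{\fe+\fe_l+\fa'}$), and each norm in the Sobolev--Morrey estimate equals $\|D^{\fe+\fa'}_xD^{\fb+\fb'}_yu\|_{L^2(\cB_{r_0};d\mu_{\fe+\fa'})}$, which by Theorem~\ref{thm:Regularity_Sobolev} (applied on the given $\cB_R$ with $r_0<R$ in place of $r$) is $\le C\big(\|f\|_{L^2(\cB_R;d\mu)}+\|u\|_{L^2((0,T);L^2(\cB_R;d\mu))}\big)$, uniformly in $s\in[t,T]$. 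Substituting this back, together with the reductions of the first paragraph, yields~\eqref{eq:Sup_est_sol_1}, \eqref{eq:Sup_est_sol_2}, and~\eqref{eq:Sup_est_sol_3}. I expect the main obstacle to be the weighted Sobolev--Morrey estimate of the second paragraph: producing the sharp weight $\xi_j^{-b_j(\xi,\rho)q/p}$ (rather than a cruder constant) when the transverse weights $b_j$ are non-constant functions, reconciling the H\"older pairing with the variable densities $d\mu_{\fa'}$, and tracking the dependence of $p$, $q$, $K$ on the weights; the Doob $h$-transform by $\prod_{i\le n_0}x_i$ provides a conceptually cleaner, conjugation-based route to the extraction of $\prod_{i\le n_0}x_i$ and to the tangent-direction part of the embedding.
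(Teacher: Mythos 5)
Your proposal matches the paper's proof in all essentials: first extract the factor $\prod_{i\le n_0}x_i$ by integrating the vanishing function $D^\fb_y u$ (and its $x_k$- or $x_l$-derivatives) in the tangent $x$-directions via the identities \eqref{eq:Sup_est_sol_u_N_1}--\eqref{eq:Sup_est_sol_u_N_3}, then bound the integrand pointwise by a weighted Sobolev--Morrey inequality producing the weight $\cW_{r_0}$ — this is precisely the paper's Lemma~\ref{lem:Sup_est_smooth}, proved by the cutoff--FTC representation, H\"older pairing with exponents $p,q$, and the weighted Sobolev inequality of \cite{Epstein_Mazzeo_2016} — and finally control the resulting weighted Sobolev norms using Lemma~\ref{lem:Equation_higher_order_derivatives} together with Theorem~\ref{thm:Regularity_Sobolev}. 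Your sketch of the Sobolev--Morrey step (detouring through $L^\infty$-norms and model operators) is looser than the paper's direct application of the weighted Sobolev embedding, but the decomposition, the key lemma, and the closing argument are the same.
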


\begin{proof}
We will prove estimate \eqref{eq:Sup_est_sol_2} only for $k=1$, because the argument is identical for all $1\leq k\leq n_0$. Similarly, we will prove estimate \eqref{eq:Sup_est_sol_3} only for $k=n$, because the argument is identical for all $n_0+1\leq k\leq n$.
Notice that property \eqref{eq:Boundary_values_tangent} implies that for all $1\leq i\leq n_0$, $\fa\in \NN^n$ such that $\fa_i=0$, and $\fb\in\NN^m$, we have that $D^{\fa}_xD^{\fb}_y u = 0$ on $(0,T)\times(\partial\cB_R\cap\{x_i=0\})$. This implies that for all $(s,z) \in (0,T)\times\cB_r$ we have
\begin{align}
\label{eq:Sup_est_sol_u_N_1}
D^{\fb}_yu(s,z) &= \int_0^{x_1}\ldots\int_0^{x_{n_0}} D^{\fe}_xD^{\fb}_yu(s,\pi(z,\rho))\,d\rho,\\
\label{eq:Sup_est_sol_u_N_2}
D^{\fe_1}_xD^{\fb}_yu_l(s,z) &= \int_0^{x_2}\ldots\int_0^{x_{n_0}} D^{\fe}_xD^{\fb}_yu(s,\pi_1(z,\eta))\,d\eta,\\
\label{eq:Sup_est_sol_u_N_3}
D^{\fe_n}_xD^{\fb}_yu_l(s,z) &= \int_0^{x_1}\ldots\int_0^{x_{n_0}} D^{\fe_n+\fe}_xD^{\fb}_yu(s,\pi(z,\rho))\,d\rho.
\end{align}
where we we recall the definition of $\pi(z,\rho)$ in \eqref{eq:Projection_pi} and of $\pi_1(z,\eta)$ in \eqref{eq:Projection_pi_k} 
($k=1$). Inequality \eqref{eq:Sup_est_deriv_slice} applied in identity \eqref{eq:Sup_est_sol_u_N_1} gives us
\begin{align}
|D^{\fb}_yu(s,z)| 
&\leq C\cN' \int_0^{x_1}\ldots\int_0^{x_{n_0}} \cW_{r_0}(\pi(z,\rho)) \,d\rho,\notag\\
&\leq C\cN' \int_0^{x_1}\ldots\int_0^{x_{n_0}} \cW_{r_0}(\pi(z)) \,d\rho,\notag\\
\label{eq:Bound_D_0_b}
&\leq C\cN' \cW_{r_0}(\pi(z)) \prod_{i=1}^{n_0} x_i,
\end{align}
where $\cW$ is defined in \eqref{eq:Definition_cW} and $\cN'$ is given by
$$
\cN':=\sum_{(\fa,\fc)\in\cD} \|D^{\fa+\fe}_xD^{\fc+\fb}_yu(s)\|_{H^1(\cB_r;d\mu_{\fa+\fe})},
$$
with $\cD$ defined in~\eqref{eq:Definition_cD}.  Using estimate
\eqref{eq:Regularity_Sobolev} to bound $\cN'$ and applying inequality
\eqref{eq:Bound_D_0_b}, we obtain \eqref{eq:Sup_est_sol_1}. The proof of
estimate \eqref{eq:Sup_est_sol_1} is identical to that of
\eqref{eq:Sup_est_sol_1} with the only modification that we integrate in
\eqref{eq:Sup_est_sol_u_N_2} in $n_0-1$ variables and we replace
$\cW_{r_0}(\pi(z))$ by $\cW_{r_0}(\pi_1(z))$. Lastly, the proof of inequality
\eqref{eq:Sup_est_sol_u_N_3} is also identical to that of
\eqref{eq:Sup_est_sol_u_N_1} with the only modification that we apply the
argument with $u$ replaced by $D^{\fe_n}_xu$ and with the measure $d\mu$
defined in \eqref{eq:Weight} replaced with the measure $d\mu_{\fe_n}$ defined
in \eqref{eq:Weight_for_higher_order_Sobolev_spaces}, with $\fa:=\fe_n$.  This
concludes the proof.
\end{proof}

We can now give the 

\begin{proof}[Proof of Theorem \ref{thm:Sup_est}]
Let $\{u_k\}_{k\in\NN}$ be the sequence constructed in 
Lemma \ref{lem:Approx_smooth}, which we apply with $r=(r_0+R)/2$. Properties \eqref{eq:Problem_k} and \eqref{eq:u_k_boundary} and Theorem \ref{thm:Sup_est_smooth} applied to $u_k$ implies that estimates \eqref{eq:Sup_est_sol_1}, \eqref{eq:Sup_est_sol_2}, and \eqref{eq:Sup_est_sol_3} hold with $u$ replaced by $u_k$. Combining this with property \eqref{eq:u_k_derivatives_conv} yields that estimates \eqref{eq:Sup_est_sol_1}, \eqref{eq:Sup_est_sol_2}, and \eqref{eq:Sup_est_sol_3} hold for $u$.
\end{proof}

We next combine the pointwise estimates derived in Theorem \ref{thm:Sup_est} with a conjugation property of generalized Kimura operators, which is a small variation of a procedure known in probability as
Doob's $h$-transform, to give the proof of Theorem \ref{thm:Boundary_reg}. Let 
\begin{equation}
\label{eq:Weight_density_transverse}
w^{\pitchfork}(z) := \prod_{j=n_0+1}^{n} x_j^{b_j(z)-1},
\quad\forall\, z\in S_{n,m}.
\end{equation}
Then we can write $d\mu(z) = w^T(z) w^{\pitchfork}(z)\, dz$, where we recall the definition of $w^T(z)$ in \eqref{eq:Weight_density_tangent}. Given a function $u$ we denote
\begin{equation}
\label{eq:tilde_u}
\widetilde u := w^T u,
\end{equation}
and direct calculations give us that
\begin{equation}
\label{eq:Conjugation_op}
\widetilde L \widetilde u = w^T L ((w^T)^{-1} \widetilde u),
\end{equation}
where $\widetilde L$ is a generalized Kimura operator with \emph{positive} weights defined by
\begin{equation}
\label{eq:tilde_L}
\begin{aligned}
\widetilde L &:= L 
+ \sum_{i=1}^{n_0}\left(2+2x_i\sum_{j=1}^{n_0} a_{ij}(z)\right)\partial_{x_i}
+ \sum_{j=n_0+1}^n 2x_j\sum_{i=1}^{n_0} a_{ij}(z) \partial_{x_j}\\
&\quad
+ \sum_{l=1}^m\sum_{i=1}^{n_0} c_{il}(z)\partial_{y_l}
+\left(\sum_{i,j=1}^{n_0} a_{ij}(z)+\sum_{i=1}^{n_0}\frac{b_i(z)}{x_i}\right).
\end{aligned}
\end{equation}
Similarly to the weight $d\mu$ in \eqref{eq:Weight}, we associate to the operator $\widetilde L$ the weight
\begin{equation}
\label{eq:tilde_weight}
d\widetilde\mu(z):= (w^T(z))^{-1}w^{\pitchfork}(z)\, dz.
\end{equation}
We expect that if $u$ is a local weak solution to the equation $u_t-Lu=0$ on $(0,T)\times\cB_R$, then $\widetilde u$ is a local weak solution to $\widetilde u_t-\widetilde L\widetilde u=0$ on $(0,T)\times\cB_R$. However, this is not obvious because, given a function $v\in H^1(\cB_R;d\mu)$, it does not follow in general that $\widetilde v$ belongs to $H^1(\cB_R;d\widetilde\mu)$. To see this, we consider the first-order derivative $\widetilde v_{x_i}$, for all $1\leq i\leq n_0$, which are given by
\begin{equation}
\label{eq:Derivative_tilde_v}
\widetilde v_{x_i} = -\frac{w^T}{x_i} v+w^T v_{x_i}.
\end{equation}
We notice that the second term on the right-hand side satisfies
$$
\int_{\cB_R} x_{i}|w^Tv_{x_i}|^2\, d\widetilde\mu = \int_{\cB_R} x_{i}|v_{x_i}|^2\, d\mu, 
$$
which is finite because $v\in H^1(\cB_R;d\mu)$. Thus, to conclude that 
$
\int_{\cB_R} x_{i}|\widetilde v_{x_i}|^2\, d\widetilde\mu <\infty,
$
we need that
$$
\int_{\cB_R} x_{i}\left|\frac{w^T}{x_i} v\right|^2\, d\widetilde\mu = \int_{\cB_R} \frac{1}{x_i}|v|^2\, d\mu, 
$$
is finite, which is not true in general.

In Lemma \ref{eq:tilde_u_space} below we prove with the aid of the
supremum bounds established in Theorem \ref{thm:Sup_est}, that knowing in
addition that $u$ is a local weak solution to the equation $u_t-Lu=0$ on
$(0,T)\times\cB_R$, then the function $\widetilde u$ belongs to the functional
space $L^2((t,T);H^1(\cB_r;d\widetilde\mu))$, for all $t\in(0,T)$ and for all
$r<R$, and so we can proceed to establish in Lemma \ref{eq:tilde_u_solution}
that $\widetilde u$ is a local weak solution to the equation $\widetilde
u_t-\widetilde L\widetilde u=0$ on $(0,T)\times\cB_R$.

\begin{lem}
\label{eq:tilde_u_space}
Let $u$ be a local weak solution to equation \eqref{eq:Problem_local}. Then $\widetilde u$ defined in \eqref{eq:tilde_u} belongs to 
$L^2((t,T);H^1(\cB_r;d\widetilde\mu))$, for all $t\in (0,T)$ and for all $r<R$.
\end{lem}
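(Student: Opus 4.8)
The plan is to show that $\widetilde u = w^T u$ lies in $L^2((t,T);H^1(\cB_r;d\widetilde\mu))$ by using formula \eqref{eq:Derivative_tilde_v} and controlling the singular term $\frac{w^T}{x_i}v$ via the pointwise estimates of Theorem \ref{thm:Sup_est}. First I would recall that $H^1(\cB_r;d\widetilde\mu)$ is defined (as in \S\ref{sec:Weak_sol}, with $d\mu$ replaced by $d\widetilde\mu$) by the norm
$$
\|\widetilde u(s)\|^2_{H^1(\cB_r;d\widetilde\mu)} = \int_{\cB_r}\left(\sum_{i=1}^n x_i|\widetilde u_{x_i}(s)|^2 + \sum_{l=1}^m |\widetilde u_{y_l}(s)|^2 + |\widetilde u(s)|^2\right)\, d\widetilde\mu,
$$
so I must bound each of these three groups of terms, integrated over $s\in(t,T)$, by a constant.

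The $y$-derivatives and the zeroth-order term are easy: $\widetilde u_{y_l} = w^T u_{y_l}$ and $\widetilde u = w^T u$, so since $d\mu = w^T w^{\pitchfork}\,dz$ while $d\widetilde\mu = (w^T)^{-1}w^{\pitchfork}\,dz$, we get $\int x_i|w^T u_{x_i}|^2 d\widetilde\mu = \int x_i|u_{x_i}|^2 d\mu$ and similarly for the $y$-terms and the $u$-term. Hence these are controlled by $\|u\|^2_{L^2((t,T);H^1(\cB_r;d\mu))}$, which is finite by \eqref{eq:Regularity_Sobolev} (or directly by \eqref{eq:First_order_derivatives}) applied to $u$ on a slightly larger ball. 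The genuine issue is the first term in \eqref{eq:Derivative_tilde_v}, namely the contribution of $\frac{w^T}{x_i}v$ for $1\leq i\leq n_0$; by the remark after \eqref{eq:Derivative_tilde_v} this amounts to showing
$$
\int_t^T\int_{\cB_r} \frac{1}{x_i}|u(s,z)|^2\, d\mu(z)\,ds <\infty, \quad\forall\, 1\leq i\leq n_0.
$$
For this I would invoke Theorem \ref{thm:Sup_est} with $\fb=0$: for $s\in[t,T]$ and $z\in\cB_r$ (after shrinking $r$ slightly and using that $u$ is a local weak solution on the larger ball, so the theorem applies) estimate \eqref{eq:Sup_est_sol_1} gives
$$
|u(s,z)| \leq C\left(\|f\|_{L^2(\cB_R;d\mu)} + \|u\|_{L^2((0,T);L^2(\cB_R;d\mu))}\right)\cW_{r_0}(\pi(z))\prod_{j=1}^{n_0} x_j,
$$
whence $\frac{1}{x_i}|u(s,z)|^2 \leq C' \cW_{r_0}(\pi(z))^2 x_i \prod_{j\neq i}^{n_0} x_j^2 \le C' \cW_{r_0}(\pi(z))^2 \prod_{j=1}^{n_0} x_j$ on $\cB_r$, since $x_j < 1$. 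Integrating against $d\mu$, the factor $\prod_{j=1}^{n_0}x_j$ exactly cancels the singular weights $\prod_{j=1}^{n_0} x_j^{-1}\,dx_j$ in \eqref{eq:Weight}, turning them into the bounded Lebesgue factors $\prod_{j=1}^{n_0} dx_j$, and the remaining weights $\prod_{j=n_0+1}^n x_j^{b_j(z)-1}\,dx_j$ together with $\prod dy_l$ are integrable on the bounded box $\cB_r$ because $b_j\geq\beta_0>0$ there by \eqref{eq:n_0}. The surviving obstacle is to check that $\cW_{r_0}(\pi(z))^2$ is integrable over $\cB_r$; but $\cW_{r_0}$ was constructed precisely so that $\cW_{r_0}(z)^q$ is a finite integral (this is where $p>2$, $q<2$, $1/p+1/q=1$ enter, with $q/p<1$), and a short Hölder-type argument — or simply observing that $\cW_{r_0}(\pi(z))$ is monotone and hence bounded as the lower endpoints of the boxes $\cR_{r_0}$ increase — shows $\cW_{r_0}\in L^\infty(\cB_r)$ or at least $L^2$. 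I would make this quantitative by noting $\cW_{r_0}(\pi(z))\le\cW_{r_0}(0)<\infty$ since $\cR_{r_0}(z)\subseteq\cR_{r_0}(0)$ and the integrand defining $\cW_{r_0}$ is nonnegative, so $\cW_{r_0}(\pi(z))^2$ is simply bounded by the constant $\cW_{r_0}(0)^2$.

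The main obstacle is the bookkeeping: verifying that the powers of $x_j$ for $n_0+1\leq j\leq n$ stay integrable (they do, uniformly, because $b_j(z)-1 > \beta_0 - 1 > -1$ on $\bar\cB_2$), and making sure the application of Theorem \ref{thm:Sup_est} is legitimate — i.e.\ choosing radii $r < r' < R$ so that Theorem \ref{thm:Sup_est} applies on $\cB_{r'}$ with the $L^2$-norms on the right-hand side finite, which follows from $u\in L^2((0,T);L^2(\cB_R;d\mu))$ by hypothesis. Putting these together, for a.e.\ $s\in(t,T)$ we obtain $\|\widetilde u(s)\|^2_{H^1(\cB_r;d\widetilde\mu)}\le C\big(\|f\|^2_{L^2(\cB_R;d\mu)}+\|u\|^2_{L^2((0,T);L^2(\cB_R;d\mu))}+\|u\|^2_{L^2((t,T);H^1(\cB_{r'};d\mu))}\big)$, and integrating in $s$ over $(t,T)$ yields $\widetilde u\in L^2((t,T);H^1(\cB_r;d\widetilde\mu))$, which is the claim. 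Finally one should note that $\widetilde u$ also inherits the correct boundary vanishing on $\partial^T\cB_r$ from the factor $\prod_{j=1}^{n_0}x_j$ in \eqref{eq:Sup_est_sol_1}, so $\widetilde u$ is genuinely in the closure defining $H^1(\cB_r;d\widetilde\mu)$ and not merely in the space with finite norm.
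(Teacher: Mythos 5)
Your overall plan matches the paper's reduction: split the $H^1(\cB_r;d\widetilde\mu)$-norm via \eqref{eq:Derivative_tilde_v}, use $d\widetilde\mu=(w^T)^{-2}\,d\mu$ to cancel the $w^T$ factors in the $x_j$ ($j>n_0$), $y_l$, and zeroth-order terms, and reduce everything to checking $\int_t^T\int_{\cB_r}\frac{1}{x_i}|u|^2\,d\mu\,ds<\infty$ for each $i\le n_0$ by invoking the supremum estimate \eqref{eq:Sup_est_sol_1}. Up to that point you are following the paper exactly, and the cancellation of $\prod_{k\le n_0} x_k^{-1}$ against the factor $\prod_{k\le n_0}x_k^2$ from $|u|^2$ and the extra $x_i^{-1}$ is correct.

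The gap is your assertion that $\cW_{r_0}(\pi(z))\le\cW_{r_0}(0)<\infty$. The monotonicity $\cW_{r_0}(\pi(z))\le\cW_{r_0}(0)$ is fine (shrinking $\cR_{r_0}$ shrinks the integral), but $\cW_{r_0}(0)$ is not finite in general. Looking at \eqref{eq:Definition_cW} and \eqref{eq:Definition_cR} with $z=0$, the defining integral contains $\int_0^{r_0}\xi_j^{-b_j\,q/p}\,d\xi_j$ for each transverse index $n_0+1\le j\le n$, and this diverges whenever $b_j(O)\,q/p\ge1$. Nothing in Assumption \ref{assump:Coeff} caps the weights $b_j$, and moreover the Sobolev exponent $p>2$ from \cite[Theorem 3.2]{Epstein_Mazzeo_2016} depends on the weights and degrades toward $2$ as they grow, so $q/p$ is typically near $1$ and $b_j(O)\,q/p\ge1$ is the generic, not the exceptional, case. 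Your observation that $q/p<1$ does kill the $\xi_i^{-q/p}$ singularity for $i\le n_0$, but for $j>n_0$ the relevant exponent is $b_j\,q/p$, not $q/p$, and it can exceed $1$. Hence you cannot extract $\cW_{r_0}(\pi(z))^2$ as a bounded prefactor and then separately note that $\prod_{j>n_0}x_j^{b_j-1}$ is integrable.

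The correct argument has to track the polynomial blow-up of $\cW_{r_0}(\pi(z))^2$ as each $x_j\to 0$ ($j>n_0$) and show that the weight $x_j^{b_j-1}$ in $d\mu$ absorbs it. Concretely, using the continuity bound \eqref{eq:Cond_b_j_2} on $b_j$, the tail integral $\int_{x_j}^{r_0}\xi_j^{-(b_j(O)+\delta)q/p}\,d\xi_j$ is of order $1\vee x_j^{1-(b_j(O)+\delta)q/p}$, so $\cW_{r_0}(\pi(z))^2$ contributes a factor $1\vee x_j^{-(b_j(O)+\delta)\frac{2}{p}+\frac{2}{q}}$; multiplying by $x_j^{b_j-1}\le x_j^{b_j(O)-\delta-1}$, the total exponent of $x_j$ is $b_j(O)\bigl(1-\tfrac{2}{p}\bigr)+\tfrac{2}{q}-1-\delta\bigl(1+\tfrac{2}{p}\bigr)$, which is $>-1$ precisely when \eqref{eq:Cond_b_j_1} holds, and \eqref{eq:Cond_b_j_1} can be arranged for small $\delta$ because $p>2$ and $q<2$ force $b_j(O)(1-2/p)+2/q>1$. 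This quantitative bookkeeping with the $\delta$-parameter is the content your proof is missing; replacing the sentence that invokes $\cW_{r_0}(0)<\infty$ with this computation closes the gap.
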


\begin{proof}
From identity \eqref{eq:tilde_u}, we have that
$$
\widetilde u_{x_j} = w^Tu_{x_j},\quad\widetilde u_{y_l}=u_{y_l},\quad\forall\, n_0+1\leq j\leq n,\quad\forall\, 1\leq l\leq m,
$$
and using \eqref{eq:tilde_weight}, it is clear that 
$$\|u(s)\|_{L^2(\cB_R;d\mu)}=\|\widetilde u(s)\|_{L^2(\cB_R;d\widetilde\mu)}, 
\|w^T u_{x_i}(s)\|_{L^2(\cB_R; x_id\widetilde\mu)}=\|u_{x_i}(s)\|_{L^2(\cB_R;
  x_id\mu)},$$ 
for all $1\leq i\leq n$, and that $\|w^T u_{y_l}(s)\|_{L^2(\cB_R;
  x_id\widetilde\mu)}=\|u_{y_l}(s)\|_{L^2(\cB_R; x_id\mu)}$, for all $1\leq
l\leq m$, where $s\in (0,T)$. Thus, using \eqref{eq:Derivative_tilde_v} applied
with $v=u(s)$, it remains to show that
\begin{equation}
\label{eq:Deriv_u}
\int_t^T\left\|\frac{w^T}{x_i} u(s)\right\|^2_{L^2(\cB_r; x_id\widetilde\mu)}\,ds<\infty,
\end{equation}
for all $0<t<T$ and $0<r<R$. Let $r<r_0<R$. Estimate \eqref{eq:Sup_est_sol_1} gives us that there is a positive constant, $C=C(\fb,L,t,T,r,r_0,R)$, such that
\begin{equation}
\label{eq:Bound_u}
|u(s,z)| \leq C \cW_{r_0}(\pi(z)) \prod_{i=1}^{n_0} x_i,\quad\forall\, s\in (t,T),\quad\forall\, z\in\cB_r.
\end{equation}
Let $\delta>0$ be small enough so that
\begin{equation}
\label{eq:Cond_b_j_1}
b_j(O)\left(1-\frac{2}{p}\right)+\frac{2}{q}-\delta\left(1+\frac{2}{p}\right)>0,
\quad\forall\, n_0+1\leq j\leq n,
\end{equation} 
where $p>2$ and $q<2$ are the constants appearing in the statement of Theorem \ref{thm:Sup_est}. Without loss of generality we can assume that $R$ is small enough so that 
\begin{equation}
\label{eq:Cond_b_j_2}
b_j(O)-\delta <b_j(z)\leq b_j(O)+\delta,\quad\forall\, z\in \cB_R,\quad\forall\, n_0+1\leq j\leq n.
\end{equation} 
Inequalities \eqref{eq:Bound_u}, \eqref{eq:Cond_b_j_2}, together with \eqref{eq:Definition_cW}, \eqref{eq:Definition_cR}, and \eqref{eq:Projection_pi} give us that
\begin{align*}
&\int_t^T\left\|\frac{w^T}{x_i} u(s)\right\|^2_{L^2(\cB_R; x_id\widetilde\mu)}\,ds\\
&\qquad\leq 
C(T-t)\int_{\cB_r} \prod_{j=n_0+1}^n 1\vee x_j^{-(b_j(O)+\delta)\frac{2}{p}+\frac{2}{q}} \prod_{j=n_0+1}^n x_j^{b_j(O)-\delta-1}x_i^{-1}(w^T(z))^{-1}\,dz,
\end{align*}
which gives us that \eqref{eq:Deriv_u} holds using condition \eqref{eq:Cond_b_j_1}. This concludes the proof.
\end{proof}

We can now give the proof of

\begin{lem}
\label{eq:tilde_u_solution}
Let $u$ be a local weak solution to equation \eqref{eq:Problem_local}. Then $\widetilde u$ defined in \eqref{eq:tilde_u} is a local weak solution to equation
\begin{equation}
\label{eq:Problem_local_tilde_u}
\widetilde u_t-\widetilde L\widetilde u = 0 \hbox{ on } (0,T)\times\cB_R.
\end{equation}
\end{lem}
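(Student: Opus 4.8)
The plan is to verify the weak-solution identity \eqref{eq:Weak_sol_var_eq} for $\widetilde u$ with respect to the operator $\widetilde L$, by changing variables in the corresponding identity for $u$ via the substitution $\widetilde v = w^T v$ relating test functions for the two problems. First I would record the mechanics of the conjugation: the calculation \eqref{eq:Conjugation_op} shows that, formally, $\widetilde u_t - \widetilde L\widetilde u = w^T(u_t - Lu)$, so smooth compactly supported test functions $\varphi$ for the $\widetilde L$-problem correspond to test functions $(w^T)^{-1}\varphi$ for the $L$-problem, and the weighted $L^2$-pairings match because $d\widetilde\mu = (w^T)^{-2}\,d\mu$ (so $(\widetilde u, \widetilde v)_{L^2(d\widetilde\mu)} = (u, v)_{L^2(d\mu)}$ when $\widetilde v = w^T v$). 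The spaces $H^1(\cB_R; d\widetilde\mu)$ and the associated Dirichlet form $\widetilde Q$ are defined exactly as in \S\ref{sec:Weak_sol} applied to $\widetilde L$, which, per \eqref{eq:tilde_L}, is a generalized Kimura operator satisfying Assumption \ref{assump:Coeff} with all weights positive (so $n_0 = 0$ for $\widetilde L$, and $\partial^T\cB_R = \emptyset$, meaning $H^1(\cB_R;d\widetilde\mu)$ is the closure of $C^\infty_c(\underline{\cB_R})$ with no Dirichlet constraint).

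The key steps, in order, are: (1) By Lemma \ref{eq:tilde_u_space}, $\widetilde u \in L^2((t,T); H^1(\cB_r; d\widetilde\mu))$ for every $t \in (0,T)$ and $r < R$, which gives the first requirement of Definition \ref{defn:Weak_sol_local}(b) for $\widetilde u\varphi$ (after also checking $d(\widetilde u\varphi)/dt \in L^2((t,T);H^{-1}(\cB_r;d\widetilde\mu))$, which follows from the equation itself once the variational identity is in hand, or directly from the smoothness of $\widetilde u$ on the interior established via Theorem \ref{thm:Boundary_reg}/\ref{thm:Regularity_Sobolev} applied to $u$). (2) Fix a test function $\varphi$ admissible for the $\widetilde L$-problem on $(0,T)\times\cB_R$, i.e. $\varphi \in L^2((0,T);H^1(\cB_R;d\widetilde\mu))$ with $d\varphi/dt \in L^2((0,T);H^{-1}(\cB_R;d\widetilde\mu))$ and $\supp\varphi \subset (0,T]\times\underline{\cB_R}$; since the relevant identity is local, one may take $\varphi \in C^1_c((0,T]\times\underline{\cB_R})$ by density and then pass to the limit. (3) Show that $v := (w^T)^{-1}\varphi$ is an admissible test function for $u$ in the sense of Definition \ref{defn:Weak_sol_local}(b): the potentially problematic point is that multiplying by $(w^T)^{-1} = \prod_{i=1}^{n_0} x_i$ does \emph{not} worsen integrability — in fact it improves it near $\{x_i = 0\}$ — and derivatives $\partial_{x_i}((w^T)^{-1}\varphi) = x_i^{-1}(w^T)^{-1}\varphi + (w^T)^{-1}\varphi_{x_i}$ against the weight $x_i\,d\mu$ are controlled because $x_i \cdot x_i^{-2}(w^T)^{-2} \cdot d\mu$ has the $x_i$-exponent shifted up, making it locally integrable. (4) Substitute $v$ into \eqref{eq:Weak_sol_var_eq} for $u$, and convert each term: $\langle du/dt, v\rangle$ becomes $\langle d\widetilde u/dt, \varphi\rangle$ via $(w^T u)_t = w^T u_t$ and the pairing identity; and $Q(u, v) = Q(u, (w^T)^{-1}\varphi)$ must be shown to equal $\widetilde Q(\widetilde u, \varphi)$, the Dirichlet form of $\widetilde L$. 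This last algebraic identity is precisely the integrated/weak form of \eqref{eq:Conjugation_op}, and I would verify it by integration by parts on $C^\infty_c$ functions (where everything is classical), using that the extra first-order and zeroth-order terms in \eqref{eq:tilde_L} are exactly the ones produced when the conjugation $w^T L ((w^T)^{-1}\cdot)$ is expanded, then extending to the full spaces by the continuity/density estimates \eqref{eq:Continuity_Dirichlet_form}. (5) Conclude that \eqref{eq:Problem_local_tilde_u} holds in the sense of Definition \ref{defn:Weak_sol_local}(b).

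The main obstacle is step (3)–(4): making rigorous that the conjugation intertwines the two \emph{weak} formulations, including the boundary behavior. The subtlety is dual to the one flagged in the text before Lemma \ref{eq:tilde_u_space} — there, going from $v \in H^1(d\mu)$ to $\widetilde v \in H^1(d\widetilde\mu)$ can fail — but here we go the other way, and multiplication by $(w^T)^{-1} = \prod x_i$ is a \emph{bounded} operation $H^1(\cB_R;d\widetilde\mu) \to H^1(\cB_R;d\mu)$ precisely because $\int_{\cB_R} x_i^{-1}\,d\mu$-type terms become $\int_{\cB_R} x_i\,d\widetilde\mu$-type terms after the substitution; moreover the Dirichlet condition $\varphi$ vanishing on $\partial^T$ is automatic ($\partial^T$ is empty for $\widetilde L$), and the factor $\prod x_i$ forces $v = (w^T)^{-1}\varphi$ to vanish on the original $\partial^T\cB_R$, which is what is needed for $v$ to be a legitimate test function for the Dirichlet problem defining $u$. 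I would therefore devote the bulk of the proof to: (a) the density reduction to smooth test functions, (b) the clean $C^\infty_c$-level computation identifying $Q(u,(w^T)^{-1}\varphi) = \widetilde Q(w^T u, \varphi)$ term by term against \eqref{eq:tilde_L} (the logarithmic-singularity terms in $V$ and in the analogous $\widetilde V$ match up since the transverse weights $b_j$, $n_0+1 \le j \le n$, are unchanged by the conjugation), and (c) invoking Lemma \ref{eq:tilde_u_space} together with the interior smoothness of $u$ (Theorem \ref{thm:Boundary_reg}) to justify that all limiting operations and the time-derivative membership hold. Everything else is bookkeeping.
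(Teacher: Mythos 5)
Your high-level strategy---transfer the weak formulation from $L$ to $\widetilde L$ through the conjugation $\widetilde u = w^T u$, with test functions converted via $v = (w^T)^{-1}\varphi$---is the right idea and matches the paper in spirit. But the execution has a genuine gap at the step where you identify the two Dirichlet forms. You propose to treat $Q(u,(w^T)^{-1}\varphi)=\widetilde Q(\widetilde u,\varphi)$ as an "algebraic identity" to be checked on $C^\infty_c$ and extended by density. This does not close, for two reasons. First, the identity is \emph{not} purely algebraic: when you expand the first-order bilinear form expressions and substitute $\widetilde u = w^T u$, $v=(w^T)^{-1}\varphi$, the difference $\widetilde Q(\widetilde u,\varphi)-Q(u,(w^T)^{-1}\varphi)$ reduces to a genuine boundary flux. (In the model case $n=n_0=1$, $m=0$ with $L=x\partial_x^2+b\partial_x$, $b(0)=0$, $d\mu=x^{-1}dx$, $d\widetilde\mu=x\,dx$, one computes $\widetilde Q(\widetilde u,\widetilde v)-Q(u,v)=-\bigl[uv/x\bigr]_0^R$, which for $v=x\varphi$ is $-u(R)\varphi(R)+\lim_{x\to 0}u(x)\varphi(x)$.) This vanishes only because of the pointwise boundary decay of $u$ supplied by the supremum estimates \eqref{eq:Sup_est_sol_1}--\eqref{eq:Sup_est_sol_3}, not by algebra on $C^\infty_c$. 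Second, the density argument cannot run: approximating $u$ in $H^1(\cB_R;d\mu)$ by smooth compactly supported $u_j$ does not yield $w^T u_j\to w^T u$ in $H^1(\cB_R;d\widetilde\mu)$, because multiplication by $w^T$ is \emph{not} a bounded map $H^1(d\mu)\to H^1(d\widetilde\mu)$---this is precisely the obstruction the paper spells out immediately before Lemma \ref{eq:tilde_u_space}. Knowing $\widetilde u\in H^1(d\widetilde\mu)$ from Lemma \ref{eq:tilde_u_space} does not by itself repair the approximation of $\widetilde u$ by $w^T u_j$.

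The paper sidesteps both problems without ever identifying the bilinear forms abstractly. It truncates to $\cB^\eps_R=\cB_R\cap\{x_i>\eps\}$, where $L$ is uniformly elliptic so $\widetilde u_t-\widetilde L\widetilde u=0$ holds classically, writes the variational identity on $\cB^\eps_R$ with explicit boundary flux terms $I^\eps_i$ over the slices $\{x_i=\eps\}$ (see \eqref{eq:I_eps}), and then uses the pointwise estimates of Theorem \ref{thm:Sup_est}, plus the smallness conditions \eqref{eq:Cond_b_j_2}--\eqref{eq:Cond_b_j_3}, to get $|I^\eps_i|\le C\eps\to 0$. In other words, the boundary contribution you implicitly assumed away is made explicit and shown to vanish quantitatively. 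To repair your argument along your own lines you would need either (a) to replace the naive density step by the specific smooth approximants from Lemma \ref{lem:Approx_smooth}, which do carry the required boundary decay, or (b) to verify the Dirichlet-form identity directly for the given solution $u$ using its interior smoothness and the sup estimates---at which point you are effectively reproducing the paper's $\eps$-truncation computation.
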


\begin{proof}
Let $\widetilde Q(\cdot,\cdot)$ be the bilinear form associated to the operator $\widetilde L$ as in \S\ref{sec:Weak_sol}.
It is sufficient to prove that for all test functions 
$v\in C^{\infty}_c((0,T]\times\underline\cB_R)$, we have that
\begin{equation}
\label{eq:Weak_sol_var_eq_tilde}
(\widetilde u(T), v(T))_{L^2(\Omega;d\widetilde\mu)} - \int_0^T\left\langle \widetilde u(t), \frac{dv(t)}{dt}\right\rangle\, dt
+\int_0^T \widetilde Q(\widetilde u(t),v(t))\, dt= 0,
\end{equation}
where $\left\langle\cdot, \cdot\right\rangle$ denotes the dual pairing of $H^1(\cB_R;d\widetilde\mu)$ and $H^{-1}(\cB_R;d\widetilde\mu)$. Combining identity
$$
\int_0^T\left\langle \frac{\widetilde u(t)}{dt}, dv(t)\right\rangle\, dt
= (\widetilde u(T), v(T))_{L^2(\Omega;d\widetilde\mu)} - \int_0^T\left\langle \widetilde u(t), \frac{dv(t)}{dt}\right\rangle\, dt 
$$
understood in the sense of distributions, together with \eqref{eq:Weak_sol_var_eq_tilde} and Lemma \ref{eq:tilde_u_space}, it follows by Definition \ref{defn:Weak_sol_local} (b) that $\widetilde u$ is indeed a weak solution to equation \eqref{eq:Problem_local_tilde_u}. 

Let $\eps>0$ and denote $\cB^{\eps}_R:=\cB_R\cap\{x_i>\eps:1\leq i\leq n\}$. Because the operator $L$ is strictly elliptic on $\cB^{\eps}_R$, we have that $\widetilde u_t-\widetilde L\widetilde u=0$ is satisfied on $(0,T)\times\cB^{\eps}_R$ in the classical sense, and so
\begin{equation}
\label{eq:Weak_sol_var_eq_tilde_eps}
(\widetilde u(T)\mathbf{1}_{\cB^{\eps}_R}, v(T))_{L^2(\Omega;d\widetilde\mu)} -\int_0^T\left\langle \widetilde u(t)\mathbf{1}_{\cB^{\eps}_R}, \frac{dv(t)}{dt}\right\rangle\, dt
+\int_0^T \widetilde Q_{\eps}(\widetilde u(t),v(t))\, dt
+\sum_{i=1}^n  I^{\eps}_i= 0,
\end{equation}
where $\widetilde Q_{\eps}(\cdot,\cdot)$ is the bilinear form associated to the generalized Kimura operator $\widetilde L$ restricted to the domain $\cB^{\eps}_R$, obtained analogously to \S\ref{sec:Weak_sol}, and the boundary terms $I^{\eps}_i$ are defined by:
\begin{equation}
\label{eq:I_eps}
\begin{aligned}
I^{\eps}_i
&:=\int_0^T\int_{\{x_i=\eps\}}\left(x_i\widetilde u_{x_i}(t)+\sum_{j=1}^nx_ix_ja_{ij}\widetilde u_{x_j}(t)+\sum_{l=1}^mx_ic_{il}\widetilde u_{y_l}(t)\right)v(t)\,d\widetilde\mu\, dt,
\end{aligned}
\end{equation}
for all $1\leq i\leq n$. Identity \eqref{eq:Derivative_tilde_v} applied with $v=u(t)$, and definition \eqref{eq:tilde_weight} of the weight $d\widetilde\mu$ give us that
\begin{align*}
I^{\eps}_i
&:=\int_0^T\int_{\{x_i=\eps\}}\left(x_i u_{x_i}(t)+\sum_{j=1}^nx_ix_ja_{ij} u_{x_j}(t)+\sum_{l=1}^mx_ic_{il} u_{y_l}(t)\right)v(t) w^{\pitchfork}\,dz\, dt\\
&\quad+ \int_0^T\int_{\{x_i=\eps\}}\left(-\mathbf{1}_{\{i\leq n_0\}}-\sum_{j=1}^{n_0}x_ja_{ij}+\sum_{l=1}^mx_ix_{il} \right)u(t)v(t) w^{\pitchfork}\,dz\, dt.
\end{align*}
Let $\delta>0$ be small enough so that
\begin{equation}
\label{eq:Cond_b_j_3}
b_j(O)\left(1-\frac{1}{p}\right)+\frac{1}{q}-\frac{1}{p}-\delta\left(1+\frac{1}{p}\right)>0,\quad\forall\, n_0+1\leq j\leq n,
\end{equation} 
where $p>2$ and $q<2$ are the constants appearing in the statement of Theorem \ref{thm:Sup_est}. Without loss of generality we can assume that $R$ is small enough so that condition \eqref{eq:Cond_b_j_2} holds. Let $0<t<T$ and $0<r<R$ be such that $\hbox{supp}(v)\subseteq [t,T]\times\bar\cB_r$. Then the supremum estimates in Theorem \ref{thm:Sup_est} and \eqref{eq:I_eps} give us that there is a positive constant, $C=C(\fb,L,t,T,r,R)$, such that 
$$
|I^{\eps}_i|\leq C\int_{\{x_i=\eps\}\cap\cB_r}\prod_{k=1}^{n_0} x_k
\left(\cW_{r_0}(\pi(z))+\sum_{j=1}^{n_0}\cW_{r_0}(\pi_j(z))+\sum_{l=n_0+1}^n \cW^l_{r_0}(\pi(z))\right)
w^{\pitchfork}(z)\,dz,
$$
where $r<r_0<R$. From definitions \eqref{eq:Definition_cW}, \eqref{eq:Definition_cW_k}, \eqref{eq:Definition_cR}, \eqref{eq:Projection_pi}, \eqref{eq:Projection_pi_k} and condition \eqref{eq:Cond_b_j_2}, it follows that
$$
|I^{\eps}_i|\leq C\int_{\{x_i=\eps\}\cap\cB_r}\prod_{k=1}^{n_0} x_k
\prod_{j=n_0+1}^n\left(1\vee x_j^{-(b_j(O)+\delta+1)\frac{1}{p}+\frac{1}{q}}\right) x_j^{b_j(O)-\delta-1}\,dz.
$$
Using condition \eqref{eq:Cond_b_j_3} we see that $|I^{\eps}_i|\leq C\eps$, and so
\begin{equation}
\label{eq:I_eps_conv}
I^{\eps}_i\rightarrow 0,\quad\hbox{ as } \eps\downarrow 0,\quad\forall\, 1\leq i\leq n.
\end{equation}
Letting $\eps$ tend to zero in identity \eqref{eq:Weak_sol_var_eq_tilde_eps}, using property \eqref{eq:I_eps_conv} and Lemma \ref{eq:tilde_u_space}, we obtain that \eqref{eq:Weak_sol_var_eq_tilde} holds, and so $\widetilde u$ is a local weak solution to equation \eqref{eq:Problem_local_tilde_u}. This completes the proof.
\end{proof}

We can now give the proof of the main result

\begin{proof}[Proof of Theorem \ref{thm:Boundary_reg}]
Using the fact that $u=(w^T)^{-1}\widetilde u$ and that 
$$
\|\widetilde u\|_{L^2((0,T)\times\cB_R;d\widetilde\mu)}
=
\|u\|_{L^2((0,T)\times\cB_R;d\mu)},
$$
the estimates in the statement of Theorem \ref{thm:Boundary_reg} follow immediately provided we prove that for all $l\in\NN$, $0<r<R$, $0<t<T$, we have that
\begin{equation}
\label{eq:C_l_tilde_u}
\|\widetilde u\|_{C^l([t,T]\times\bar\cB_{R/4})} \leq C\|\widetilde u\|_{L^2((0,T)\times\cB_R;d\widetilde\mu)},
\end{equation}
where $C=C(\fb,l,L,t,T,R)$ is a positive constant. From \cite{Epstein_Mazzeo_2016}, for all $0<r<R$ and $0<t<T$, there is a positive constant $C=C(\fb,L,t,T,r,R)$ such that
\begin{equation}
\label{eq:L_infty_tilde_u}
\|\widetilde u\|_{L^{\infty}((t/2,T)\times\cB_r)} \leq C\|\widetilde u\|_{L^2((0,T)\times\cB_R;d\widetilde\mu)}.
\end{equation}
Let $\{\widetilde u_k\}_{k\in\NN}\in C^{\infty}([0,\infty)\times \bar\cB_{R/2})$ be the sequence of functions constructed in Lemma \ref{lem:Approx_smooth}, which we apply with $L$ replaced by $\widetilde L$ and $r=R/4$. Using the local a priori supremum estimate  
\cite[Theorem 1.1]{Pop_2013b} and inequality \eqref{eq:L_infty_tilde_u}, it follows that for all $\eps>0$ there is $k_{\eps}\in\NN$ such that
\begin{equation*}
\|\widetilde u_k\|_{C^l([t,T]\times\bar\cB_{R/4})} \leq C \|\widetilde u\|_{L^2((0,T)\times\bar\cB_R)}+\eps,\quad\forall\, k\geq k_{\eps}.
\end{equation*}
Letting $\eps$ tend to zero and applying the Arzel\'a-Ascoli Theorem together with property \eqref{eq:u_k_derivatives_conv}, we obtain estimate \eqref{eq:C_l_tilde_u}. This concludes the proof.
\end{proof}

We continue to apply the conjugation property of generalized Kimura operators to give the proofs of the boundary Harnack principles stated in Theorems \ref{thm:Carleson_Hopf_Oleinik} and \ref{thm:Holder_cont}.

\begin{proof}[Proof of Theorem \ref{thm:Carleson_Hopf_Oleinik}]
According to Lemma \ref{eq:tilde_u_solution}, the function $\widetilde u$
defined by \eqref{eq:tilde_u} is a non-negative, local weak solution to the equation $\widetilde u_t-\widetilde L\widetilde u = 0$ on $(t-4r^2,t+4r^2)\times B_{2r}(z)$. We recall that the generalized Kimura operator $\widetilde L$ in \eqref{eq:Conjugation_op} has positive weights, and so we can apply the Harnack inequality established in \cite[Theorem 1.2]{Epstein_Pop_2013b} and in \cite[Theorem 4.1]{Epstein_Mazzeo_2016} to conclude that there is a positive constant, $H=H(\fb, L, n,m)$, such that
\begin{align*}
\sup_{Q_r(t,z)} \widetilde u \leq H \inf_{Q^+_r(t,z)} \widetilde u
\qquad\hbox{ and }\qquad
\sup_{Q^-_r(t,z)} \widetilde u \leq H \inf_{Q_r(t,z)} \widetilde u.
\end{align*}
Recalling the relationship between the solutions $u$ and $\widetilde u$ in \eqref{eq:tilde_u} and the definition of the point $A_r(z)$ in \eqref{eq:A_r}, the Carleson-type estimate \eqref{eq:Carleson} and the Hopf-Oleinik-type estimate \eqref{eq:Hopf_Oleinik} are direct consequences of the first and second inequalities above, respectively. The supremum quotient bound \eqref{eq:Quotient_bounds_sup} follows from estimates \eqref{eq:Carleson} and \eqref{eq:Hopf_Oleinik} applied to $u_1$ and $u_2$, respectively, while the infimum quotient bound \eqref{eq:Quotient_bounds_inf} follows from estimates \eqref{eq:Carleson} and \eqref{eq:Hopf_Oleinik} applied to $u_2$ and $u_1$, respectively. This completes the proof.
\end{proof}

We conclude this section with the

\begin{proof}[Proof of Theorem \ref{thm:Holder_cont}]
Employing again the conjugation argument as in the proof of Theorem \ref{thm:Carleson_Hopf_Oleinik}, we have that $u_1/u_2 = \widetilde u_1/{\widetilde u_2}$. Because the operator $\widetilde L$ has positive weights, we can apply \cite[Corollary 4.1]{Epstein_Mazzeo_2016} to obtain that there is a positive constant, $\alpha=\alpha(\fb,L,n,m)$, such that $\widetilde u_i$ belongs to $C^{\alpha}_{\hbox{\tiny{WF}}}(\bar Q_r(z))$, for $i=1,2$. From here the conclusion of Theorem \ref{thm:Holder_cont} follows.
\end{proof}

\subsection{Boundary behavior of global weak solutions}
\label{sec:Boundary_behavior_global}
In this section to give the proofs of the main results stated in \S\ref{sec:Global_regularity}. We begin with the

\begin{proof}[Proof of Theorem \ref{thm:Global_regularity}]
It is clear that the conclusion of Theorem \ref{thm:Global_regularity} is an immediate consequence of Theorem \ref{thm:Boundary_reg}.
\end{proof}

To give the proof of Theorem \ref{thm:Quotient_bounds_global}, we first need to establish an elliptic-type Harnack inequality. For all $r>0$ and $t\in (r^2,T)$, we denote
\begin{equation}
\label{eq:P_r_t}
P_r(t) := (t-r^2,t)\times P.
\end{equation}

Analogously to the elliptic-type Harnack inequality for strictly elliptic operators, \cite[Theorem 1.3]{Fabes_Garofalo_Salsa_1986}, we next prove

\begin{lem}[Elliptic-type Harnack inequality]
\label{lem:Elliptic_Harnack}
Let $u$ be a global, non-negative, weak solution to the initial-value problem
\eqref{eq:Initial_value_problem}. For all $r\in (0,1)$, there is a positive
constant, $H=H(\fb,L,n,m,r)$, such that for all $4r^2<t<T-4r^2$ we have that
\begin{equation}
\label{eq:Elliptic_Harnack}
\sup_{P_r(t)} w^Tu \leq H \inf_{P_r(t)} w^Tu.
\end{equation}
\end{lem}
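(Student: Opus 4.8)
The plan is to strip off the degeneracy at the tangent boundary by the conjugation of \S\ref{sec:Supremum_estimates} and then reduce the statement to a heat-kernel estimate on the compact manifold $P$. By Lemma~\ref{eq:tilde_u_solution} --- applied in adapted coordinates near each point of $P$ and patched over a finite cover --- the function $\widetilde u:=w^Tu$ is a non-negative local weak solution of $\widetilde u_t-\widetilde L\widetilde u=0$ on $(0,\infty)\times P$, where $\widetilde L$ is the generalized Kimura operator with \emph{positive} weights in~\eqref{eq:tilde_L}; and since $w^Tu=\widetilde u$, the asserted inequality~\eqref{eq:Elliptic_Harnack} is precisely $\sup_{P_r(t)}\widetilde u\le H\inf_{P_r(t)}\widetilde u$. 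Because $\widetilde L$ is tangent to every boundary hypersurface of $P$, Theorem~\ref{thm:Global_regularity} applied to $\widetilde L$ gives $\widetilde u\in C^\infty((0,\infty)\times P)$, so pointwise values are meaningful; by the $L^2$ energy estimate $\widetilde u(s_0)\in L^2(P;d\widetilde\mu)$ for each $s_0>0$, and by uniqueness of weak solutions (G\r{a}rding's inequality for $\widetilde L$) we have $\widetilde u(s)=\widetilde P_{s-s_0}\widetilde u(s_0)$ for $s>s_0>0$, where $\widetilde P_\sigma$ is the semigroup generated by $\widetilde L$ on $L^2(P;d\widetilde\mu)$, built as in \S\ref{sec:Weak_sol}. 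Thus it suffices to prove the slab Harnack inequality for non-negative solutions of the positive-weight Kimura equation on the compact $P$, with $H$ permitted to depend on $r$.

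The key input is a two-sided bound on the heat kernel $\widetilde p(\sigma,p,q)$ of $\widetilde P_\sigma$ for $\sigma$ bounded away from $0$: for each $0<a<b$ there are constants $0<c(a,b)\le C(a,b)<\infty$ with $c(a,b)\le\widetilde p(\sigma,p,q)\le C(a,b)$ for all $\sigma\in[a,b]$ and all $p,q\in\bar P$. The upper bound follows from the a priori supremum estimate $\|\widetilde P_\sigma g\|_{C^0(P)}\lesssim\|g\|_{L^2(P;d\widetilde\mu)}$ (Theorem~\ref{thm:Boundary_reg}, or \cite[Theorem 4.1]{Epstein_Mazzeo_2016} since $\widetilde L$ has positive weights) together with Chapman--Kolmogorov $\widetilde p(2\sigma,p,q)=\int_P\widetilde p(\sigma,p,q')\widetilde p(\sigma,q',q)\,d\widetilde\mu(q')$ and Cauchy--Schwarz applied to the forward and backward kernels. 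The lower bound follows from strict positivity of $\widetilde p(\sigma,\cdot,\cdot)$ on $\bar P\times\bar P$ for $\sigma>0$ --- the $\widetilde L$-diffusion issued from any point of $\bar P$ enters $\hbox{int}(P)$ at once, since positivity of the weights makes the first-order part transverse-inward along every boundary face, while in the interior $\widetilde L$ is strictly elliptic --- combined with joint continuity of $\widetilde p$ up to the boundary and compactness of $\bar P$.

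Granting this, the proof is immediate. We may assume $u\not\equiv0$ (otherwise~\eqref{eq:Elliptic_Harnack} is trivial), so $\widetilde u>0$ on $\hbox{int}(P)$ for positive times; set $M:=\int_P\widetilde u(t-2r^2,q)\,d\widetilde\mu(q)$, which is finite (by the supremum estimate and $\widetilde\mu(P)<\infty$, the latter because $\widetilde L$ has positive weights) and strictly positive. For every $(s,q)\in P_r(t)$ we have $s-(t-2r^2)\in(r^2,2r^2)\subset[r^2,2r^2]$, where the hypothesis $t>4r^2$ is used to guarantee $t-2r^2>0$; hence, writing $\widetilde u(s,q)=\int_P\widetilde p\bigl(s-(t-2r^2),q,q'\bigr)\widetilde u(t-2r^2,q')\,d\widetilde\mu(q')$ and inserting the kernel bounds, $c(r^2,2r^2)\,M\le\widetilde u(s,q)\le C(r^2,2r^2)\,M$. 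Taking the supremum and the infimum over $P_r(t)$ yields $\sup_{P_r(t)}\widetilde u\le C(r^2,2r^2)\,M\le\bigl(C(r^2,2r^2)/c(r^2,2r^2)\bigr)\inf_{P_r(t)}\widetilde u$, which is~\eqref{eq:Elliptic_Harnack} with $H:=C(r^2,2r^2)/c(r^2,2r^2)=H(\fb,L,n,m,r)$.

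The one step that needs genuine work is the uniform lower bound $\inf_{p,q\in\bar P}\widetilde p(\sigma,p,q)>0$ on a compact $\sigma$-interval, i.e.\ strict positivity of the Kimura heat kernel \emph{up to the characteristic boundary} (together with its up-to-the-boundary continuity); I would extract this from the boundary Harnack and Hopf--Oleinik theory for positive-weight Kimura operators in \cite{Epstein_Mazzeo_2016} and from Theorems~\ref{thm:Carleson_Hopf_Oleinik}--\ref{thm:Holder_cont} of the present paper, rather than from \cite{Epstein_Pop_2015}, to avoid a circular dependence. An alternative, heat-kernel-free route chains the up-to-the-boundary parabolic Harnack inequality for $\widetilde L$ from \cite[Theorem 4.1]{Epstein_Mazzeo_2016} with interior parabolic Harnack over a finite cover of $P$ (subdividing the time interval into many short steps, one spatial Harnack step per sub-interval) to obtain a global forward estimate $\widetilde u(\sigma_0,\cdot)\le H_0\widetilde u(\sigma_1,\cdot)$ whenever $\sigma_1-\sigma_0\ge r^2$, whence $\sup_{P_r(t)}\widetilde u\le H_0\widetilde u(t+r^2,p_0)$ and $\inf_{P_r(t)}\widetilde u\ge H_0^{-1}\widetilde u(t-2r^2,p_0)$; but closing the loop then still requires the backward-in-time bound $\widetilde u(t+r^2,p_0)\le H_1\widetilde u(t-2r^2,p_0)$, which is exactly what the heat-kernel two-sided estimate supplies, so the two routes meet at the same obstruction.
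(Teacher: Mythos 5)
Your conjugation step and the reduction to a slab Harnack inequality for $\widetilde u$ match the paper, but the core of your argument takes a different --- and, as you yourself flag, incomplete --- route, via a two-sided heat-kernel bound. The upper bound by Chapman--Kolmogorov and the $L^2\to C^0$ estimate is fine, but the uniform \emph{lower} bound $\inf_{p,q\in\bar P}\widetilde p(\sigma,p,q)>0$ on a compact $\sigma$-interval is exactly the hard step, and you do not prove it. Positivity of the kernel up to the characteristic boundary is not an elementary consequence of the cited Harnack machinery: the boundary Harnack theorems in \cite{Epstein_Mazzeo_2016} and Theorems~\ref{thm:Carleson_Hopf_Oleinik}--\ref{thm:Holder_cont} are \emph{local}, giving comparability of a solution to its own value at one interior-shifted point, and a compactness/chaining argument to get an absolute lower bound on $\widetilde p(\sigma,\cdot,\cdot)$ over $\bar P\times\bar P$ would need to be carried out; nothing in the paper or the cited references hands this to you. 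In fact, the paper's own route to such kernel bounds in \cite{Epstein_Pop_2015} \emph{uses} the present lemma, so the dependence you were worried about being circular really would be.

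The paper avoids the heat kernel altogether. You came very close in your final paragraph: you correctly observe that the missing ingredient in the ``heat-kernel-free route'' is a \emph{backward-in-time} bound of the form $\widetilde u(t+r^2,\cdot)\lesssim\widetilde u(t-2r^2,\cdot)$. The paper supplies exactly this by one application of the parabolic \emph{maximum principle} for $\widetilde L$ on the compact $P$: for $(t',z')\in(t-2r^2,t+2r^2]\times P$ one has $\widetilde u(t',z')\le C\sup_{P^-_r(t)}\widetilde u$, with $P^-_r(t)=(t-3r^2,t-2r^2)\times P$. Combining this with two \emph{forward} Harnack inequalities (from $P_r(t)$ to $P^+_r(t)$, and from $P^-_r(t)$ to $P_r(t)$, both obtained by covering $P$ and using \cite[Theorem 1.2]{Epstein_Pop_2013b}, \cite[Theorem 4.1]{Epstein_Mazzeo_2016}) yields
\begin{equation*}
\sup_{P_r(t)}\widetilde u\ \le\ H\inf_{P^+_r(t)}\widetilde u\ \le\ CH\sup_{P^-_r(t)}\widetilde u\ \le\ CH^2\inf_{P_r(t)}\widetilde u,
\end{equation*}
which is \eqref{eq:Elliptic_Harnack}. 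So the missing idea in your argument is not a sophisticated kernel estimate but the elementary forward-in-time maximum principle; once you have it, the chain closes and you never need kernel positivity up to the boundary.

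A minor additional point: your assertion that $u\not\equiv0$ implies $\widetilde u>0$ on $\hbox{int}(P)$ for positive times also needs a justification (strong maximum principle / strict positivity of the semigroup); the paper's proof does not require it, since both Harnack inequalities and the maximum principle are stated for non-negative solutions and yield \eqref{eq:Elliptic_Harnack} trivially if $\widetilde u$ vanishes identically on some time slab.
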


\begin{rmk}
  Notice that the main difference between the statements of Lemmas
  \ref{lem:Elliptic_Harnack} and \cite[Theorem 1.3]{Fabes_Garofalo_Salsa_1986}
  is that in inequality \eqref{eq:Elliptic_Harnack} the domain $P_r(t)$
  comprises also the boundary $(t-r^2,t)\times \partial P$, while in the
  statement of \cite[Theorem 1.3]{Fabes_Garofalo_Salsa_1986}, the domain
  $P_r(t)$ is replaced by $(t-r^2,t)\times \{p\in P:\hbox{dist}(p,\partial
  P)>r\}$. However, our estimate \eqref{eq:Elliptic_Harnack} is weighted by the
  function $w^T(z)$ defined in \eqref{eq:Weight_density_tangent}, while in
  \cite[Theorem 1.3]{Fabes_Garofalo_Salsa_1986} there is no weight used.
\end{rmk}

\begin{proof}[Proof of Lemma \ref{lem:Elliptic_Harnack}]
  The argument employed in the proof of Lemma \ref{eq:tilde_u_solution} yields
  that $\widetilde u$ defined by \eqref{eq:tilde_u} is a global, non-negative,
  weak solution to the equation $\widetilde u_t-\widetilde L\widetilde u = 0$
  on $(0,T)\times P$. Because the generalized Kimura operator $\widetilde L$ in
  \eqref{eq:Conjugation_op} has positive weights, we can apply a covering
  argument together with the Harnack inequality established in \cite[Theorem
  1.2]{Epstein_Pop_2013b} and in \cite[Theorem 4.1]{Epstein_Mazzeo_2016} to
  conclude that there is a positive constant, $H=H(\fb, L, n,m,r)$, such that
\begin{align}
\label{eq:Harnack_P_r_t_1}
\sup_{P_r(t)} \widetilde u &\leq H \inf_{P^+_r(t)} \widetilde u,\\
\label{eq:Harnack_P_r_t_2}
\sup_{P^-_r(t)} \widetilde u &\leq H \inf_{P_r(t)} \widetilde u,
\end{align}
where we recall the definition of the set $P_r(t)$ in \eqref{eq:P_r_t}, and we let $P^+_r(t):=(t+r^2, t+2r^2)\times P$ and $P^-_r(t):=(t-3r^2, t-2r^2)\times P$. The maximum principle 
\cite[Theorem]{Epstein_Mazzeo_annmathstudies} gives us that 
\begin{equation}
\label{eq:Max_princ_P_r_t}
\widetilde u \leq C \sup_{P^-_r(t)} \widetilde u,\quad\forall\, (t',z')\in (t-2r^2,t+2r^2]\times P.
\end{equation}
where $C=C(L,n,m,T)$ is a positive constant. Thus, we obtain that
\begin{align*}
\sup_{P_r(t)} \widetilde u 
&\leq H \inf_{P^+_r(t)} \widetilde u 
\quad\hbox{ (by \eqref{eq:Harnack_P_r_t_1})}\\
&\leq CH \sup_{P^-_r(t)} \widetilde u
\quad\hbox{ (by \eqref{eq:Max_princ_P_r_t})}\\
&\leq CH^2 \inf_{P_r(t)} \widetilde u
\quad\hbox{ (by \eqref{eq:Harnack_P_r_t_2})},
\end{align*}
which gives us the elliptic-type Harnack inequality \eqref{eq:Elliptic_Harnack}.
\end{proof}

Lemma \ref{lem:Elliptic_Harnack} allows us now to give the

\begin{proof}[Proof of Theorem \ref{thm:Quotient_bounds_global}]
The elliptic-type Harnack inequality \eqref{eq:Elliptic_Harnack} gives us that
\begin{align*}
u_1(s,q)\leq H u_1(t, A_r(p))
\quad\hbox{ and }\quad
u_2(t, A_r(p)) \leq H u_2(s,q),
\quad\forall\, (s,q)\in Q_r(t,p),
\end{align*}
which implies that
$$
\sup_{Q_r(t,p)} \frac{u_1}{u_2} \leq H^2 \frac{u_1(t,A_r(p))}{u_2(t,A_r(p))}.
$$
Reversing the roles of $u_1$ and $u_2$, it follows that
$$
\inf_{Q_r(t,p)} \frac{u_1}{u_2} \geq H^{-2} \frac{u_1(t,A_r(p))}{u_2(t,A_r(p))},
$$
and so the preceding two inequalities yield property \eqref{eq:Quotient_bounds_global}.
\end{proof}

Since $\widetilde u=w_T u,$ we see that
\begin{equation}
  \|w_Tu(t,\cdot)\|_{C^{0,\gamma}_{\WF}(P)}=
\|\widetilde u(t,\cdot)\|_{C^{0,\gamma}_{\WF}(P)}
\end{equation}
is finite.  Let $C^{0,\gamma}_{\WF,D}(P)$ denote the subspace of
$C^{0,\gamma}_{\WF}(P)$ for which the norm
$\|w_Tf\|_{C^{0,\gamma}_{\WF}(P)}$ is finite, with this norm.  We can
apply the proof of Corollary 5.1 in~\cite{Epstein_Mazzeo_2016} to
prove the following corollary.
\begin{cor} Assume that $\cL$ is a second-order differential operator defined
  on a compact manifold with corners $P$ such that when written in a
  local system of coordinates it takes the form of the operator $L$
  defined in \eqref{eq:Operator} and it satisfies Assumption
  \ref{assump:Coeff}.  If $\overline{\cL_0}$ is the graph closure of
  $\cL$ acting on $C^0_{D}(P),$ which is $C^{\infty}_c(P\setminus
  \partial^T P),$ with respect to the weighted sup-norm:
  $$\|f\|_{0,D}=\sup_{x\in P}|w_T(x)f(x)|,$$
  then, for any $0<\gamma<1,$ and
  $\mu$ with $\Re\mu >0,$ the resolvent operator
  $(\mu\Id-\overline{\cL_0})^{-1}:C^0_{D}(P)\to C^{0,\gamma}_{\WF,D}(P)$ is bounded,
  and is therefore a compact operator on $C^0_{D}(P).$
\end{cor}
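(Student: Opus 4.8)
The plan is to remove the degeneracy of $\cL$ along the tangent boundary $\partial^T P$ by the conjugation (Doob $h$-transform) already exploited in \S\ref{sec:Supremum_estimates}, thereby reducing the statement to a generalized Kimura operator with \emph{strictly positive} weights, and then to argue exactly as in the proof of \cite[Corollary 5.1]{Epstein_Mazzeo_2016}. Let $\Phi$ denote multiplication by $w^T$, that is $\Phi f := w^T f$, with $w^T$ as in \eqref{eq:Weight_density_tangent} and $\widetilde u = w^T u$ as in \eqref{eq:tilde_u}. Since $w^T$ and $(w^T)^{-1}$ are smooth and strictly positive on $P\setminus\partial^T P$, the map $\Phi$ is a linear bijection of $C^\infty_c(P\setminus\partial^T P)$ onto itself; and by the very definitions of $\|\cdot\|_{0,D}$ and of the space $C^{0,\gamma}_{\WF,D}(P)$, it extends to isometric isomorphisms
\[
\Phi\colon C^0_D(P)\longrightarrow Z,
\qquad
\Phi\colon C^{0,\gamma}_{\WF,D}(P)\longrightarrow C^{0,\gamma}_{\WF}(P),
\]
where $Z$ is the closure of $C^\infty_c(P\setminus\partial^T P)$ in the ordinary supremum norm on $P$. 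Moreover, by the commutation identity \eqref{eq:Conjugation_op} (equivalently, by the computation yielding \eqref{eq:tilde_L}), $\Phi$ conjugates $\cL$ into the generalized Kimura operator $\widetilde\cL$ of \eqref{eq:tilde_L}, which has strictly positive weights and bounded, smooth coefficients --- in particular its zeroth-order term $\sum_{i,j\le n_0}a_{ij}+\sum_{i\le n_0}b_i/x_i$ is bounded, because $b_i$ vanishes on $\{x_i=0\}$ for $i\le n_0$, so $b_i/x_i$ is smooth. Thus $\Phi\,\cL_0\,\Phi^{-1}=\widetilde\cL_0$ on $C^\infty_c(P\setminus\partial^T P)$, and since $\Phi$ is a topological isomorphism this intertwining passes to the graph closures: $\Phi\,\overline{\cL_0}\,\Phi^{-1}=\overline{\widetilde\cL_0}$.

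Granting this reduction, the proof proceeds as in \cite[Corollary 5.1]{Epstein_Mazzeo_2016}. Because $\widetilde\cL$ is a generalized Kimura operator with positive weights and bounded coefficients, the same argument as in the proof of \cite[Corollary 5.1]{Epstein_Mazzeo_2016} --- based on the elliptic regularity theory for positive-weight generalized Kimura operators developed there --- shows that, for $\Re\mu>0$, the operator $\mu\Id-\overline{\widetilde\cL_0}$ is invertible and $(\mu\Id-\overline{\widetilde\cL_0})^{-1}$ maps $Z$ boundedly into $C^{0,\gamma}_{\WF}(P)$ for every $0<\gamma<1$. Conjugating by $\Phi$, one obtains $(\mu\Id-\overline{\cL_0})^{-1}=\Phi^{-1}(\mu\Id-\overline{\widetilde\cL_0})^{-1}\Phi$, which is a bounded operator $C^0_D(P)\to C^{0,\gamma}_{\WF,D}(P)$; concretely, if $f\in C^0_D(P)$ and $(\mu\Id-\overline{\cL_0})u=f$, then $\widetilde u=w^Tu=(\mu\Id-\overline{\widetilde\cL_0})^{-1}(w^Tf)\in C^{0,\gamma}_{\WF}(P)$ with $\|\widetilde u\|_{C^{0,\gamma}_{\WF}(P)}\le C\|w^Tf\|_{Z}=C\|f\|_{0,D}$, so that $u\in C^{0,\gamma}_{\WF,D}(P)$ and $\|u\|_{C^{0,\gamma}_{\WF,D}(P)}=\|\widetilde u\|_{C^{0,\gamma}_{\WF}(P)}\le C\|f\|_{0,D}$ (here Lemmas~\ref{eq:tilde_u_space} and \ref{eq:tilde_u_solution} and Theorem~\ref{thm:Global_regularity} guarantee that $\widetilde u$ is a genuine solution, continuous up to $\partial P$). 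Finally, the inclusion $C^{0,\gamma}_{\WF,D}(P)\hookrightarrow C^0_D(P)$ is compact: applying $\Phi$, it becomes the inclusion $C^{0,\gamma}_{\WF}(P)\hookrightarrow Z\subseteq C^0(P)$, which is compact by the Arzel\`a--Ascoli theorem, since $(P,\rho)$ is a compact metric space and bounded, $\rho$-equicontinuous families are precompact in $C^0(P)$. Composing the bounded map $C^0_D(P)\to C^{0,\gamma}_{\WF,D}(P)$ with this compact embedding shows that $(\mu\Id-\overline{\cL_0})^{-1}$ is a compact operator on $C^0_D(P)$, which is the assertion of the corollary.

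The step I expect to require the most care is the operator-theoretic part of the first paragraph: verifying that the graph closure $\overline{\cL_0}$ taken in the weighted Banach space $C^0_D(P)$ is carried by $\Phi$ exactly onto the graph closure $\overline{\widetilde\cL_0}$ handled in \cite[Corollary 5.1]{Epstein_Mazzeo_2016} --- equivalently, that $\Phi$ maps $\mathrm{Dom}(\overline{\cL_0})$ onto $\mathrm{Dom}(\overline{\widetilde\cL_0})$, which is automatic once $\Phi$ is recognised as an isometric isomorphism of the ambient spaces carrying the common dense core $C^\infty_c(P\setminus\partial^T P)$, but which still requires reconciling this core with the one used for the positive-weight operator in \cite{Epstein_Mazzeo_2016} (where $\partial^T$ is empty) and checking that $\Re\mu>0$ lies in the resolvent set of $\overline{\widetilde\cL_0}$. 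The latter is supplied by the semigroup theory for generalized Kimura operators with positive weights developed in \cite{Epstein_Mazzeo_2016}, the bounded zeroth-order term of $\widetilde\cL$ entering only as a bounded perturbation; and once compactness of a single resolvent is established, the spectrum of $\overline{\cL_0}$ is discrete, so that the conclusion extends to all admissible $\mu$ by the analytic Fredholm alternative.
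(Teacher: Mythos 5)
Your proof is correct and takes essentially the same approach as the paper: conjugate by multiplication by $w^T$ to reduce to the positive-weight operator $\widetilde\cL$ of \eqref{eq:tilde_L}, invoke the argument of \cite[Corollary 5.1]{Epstein_Mazzeo_2016} for the resulting boundedness of the resolvent, and then obtain compactness from the Arzel\`a--Ascoli theorem. The paper's published proof is far terser---it cites the proof of \cite[Corollary 5.1]{Epstein_Mazzeo_2016} in the paragraph preceding the corollary and spells out only the Arzel\`a--Ascoli step---so your writeup just makes explicit the intertwining $\Phi\,\overline{\cL_0}\,\Phi^{-1}=\overline{\widetilde\cL_0}$ that the paper leaves implicit.
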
 
\begin{proof} We only need to show that bounded subsets of $C^{0,\gamma}_{\WF,D}(P)$ are compact subsets of $C^0_D(P),$ but this follows easily from the Arzela-Ascoli theorem. 
  \end{proof}
\appendix

\section{Appendix}
\label{sec:Appendix}
In this appendix, we collect several auxiliary results which we use throughout the article.

\begin{lem}
\label{lem:Bad_first_derivative}
Let $d\mu$ and $d\mu_{\fe_i}$ be the measures defined in \eqref{eq:Weight} and \eqref{eq:Weight_for_higher_order_Sobolev_spaces}, respectively, where we assume that the weights $\{b_i:1\leq i\leq n\}$ satisfy condition \eqref{eq:Cleanness} and $n_0+1 \leq i \leq n$, where $n_0$ is the integer defined in \eqref{eq:n_0}. Let $0<r<R<1$, $\varphi:\bar S_{n,m}\rightarrow\RR$ be a smooth function with compact support in $\bar \cB_r$, and $u\in H^1(\cB_R; d\mu_{\fe_i})$. Then $\varphi u$ belongs to $L^2(\cB_R;d\mu)$ and there is a positive constant, $C=C(b,m,n)$, such that 
\begin{equation}
\label{eq:Bad_first_derivative}
\begin{aligned}
\int_{\cB_R}  |u|^2 \varphi^2\,d\mu
&\leq 
\int_{\cB_R}\left(\sum_{j=1}^nx_j|D^{\fe_j}_xu|^2 +\sum_{l=1}^m|D^{\ff_l}_yu|^2\right)\varphi^2\,d\mu_{\fe_i}\\
&\quad + C\int_{\cB_R}\left(\varphi^2+|\nabla\varphi|^2\right)|u|^2\, d\mu_{\fe_i}.
\end{aligned}
\end{equation}
\end{lem}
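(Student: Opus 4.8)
The estimate \eqref{eq:Bad_first_derivative} is essentially a weighted one-dimensional Hardy inequality in the variable $x_i$, and the plan is to make that reduction precise. Since $n_0+1\le i\le n$, a comparison of \eqref{eq:Weight} with \eqref{eq:Weight_for_higher_order_Sobolev_spaces} gives $d\mu_{\fe_i}=x_i\,d\mu$, so the left-hand side of \eqref{eq:Bad_first_derivative} is $\int_{\cB_R}x_i^{-1}|u\varphi|^2\,d\mu_{\fe_i}$, and the whole task is to trade the singular factor $x_i^{-1}$ against a factor $x_i$ carried by a first $x_i$-derivative. One harmless preliminary reduction: on each line parallel to the $x_i$-axis the exponent $b_i(z)$ may be frozen at its trace $\widehat b_i:=b_i(z)\restrictedto_{\{x_i=0\}}$, since $x_i^{b_i(z)-\widehat b_i}=\exp((b_i(z)-\widehat b_i)\ln x_i)$ lies between two positive constants on $\bar\cB_R$ (because $|b_i(z)-\widehat b_i|\le Cx_i$ and $x_i|\ln x_i|$ is bounded); after this, and after shrinking $R$, $d\mu_{\fe_i}$ is comparable with uniform constants to a measure whose $x_i$-density is $x_i^{\widehat b_i}$, with $\widehat b_i\ge\beta_0>0$ by \eqref{eq:n_0} and independent of $x_i$.

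I would prove the inequality first for $u\in C^\infty(\bar\cB_R)$ and then remove this hypothesis by density. With $u$ smooth put $g:=\varphi u$, which has compact support in $\cB_r$; freezing all variables but $x_i$ and using Fubini, it suffices to establish, for each fixed value of the remaining variables, the one-dimensional bound
\[
\int_0^R x^{\widehat b-1}g(x)^2\,dx\;\le\;C_{\widehat b}\int_0^R x^{\widehat b+1}g'(x)^2\,dx,\qquad \widehat b\ge\beta_0>0,
\]
for $g$ smooth and supported in $[0,r)$. This follows by writing $x^{\widehat b-1}=\widehat b^{-1}\partial_x(x^{\widehat b})$, integrating by parts (the boundary term at $x=0$ vanishes since $\widehat b>0$ and that at $x=R$ since $g$ is compactly supported), and using $\frac{2}{\widehat b}x^{\widehat b}|g||g'|\le\eps x^{\widehat b+1}|g'|^2+\frac{1}{\widehat b^2\eps}x^{\widehat b-1}|g|^2$, the last term being absorbed for $\eps$ small. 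Reassembling the slices, restoring the comparability with $d\mu_{\fe_i}$, expanding $g'=\varphi u_{x_i}+u\varphi_{x_i}$ through $(g')^2\le 2\varphi^2u_{x_i}^2+2u^2\varphi_{x_i}^2$, and using $x_i^{\widehat b_i+1}\le x_i^{\widehat b_i}$ (valid since $R<1$) in the $\varphi_{x_i}$-term, one arrives at an estimate of the form \eqref{eq:Bad_first_derivative}, into which the manifestly nonnegative terms $\sum_{j\ne i}x_j|D^{\fe_j}_xu|^2+\sum_l|D^{\ff_l}_yu|^2$ may be freely included. One point needs care: the remaining exponents $b_k(z)$, $n_0+1\le k\le n$, still depend on $x_i$, so differentiating the corresponding factors of $d\mu_{\fe_i}$ (and the bounded factor left over from the preliminary reduction) in the $x_i$-integration by parts produces error terms carrying $\ln x_k$; these are exactly the ``mild logarithmic singularities'' already met in \S\ref{sec:Weak_sol}, and one bounds $\int|\ln x_k|\,\varphi^2u^2\,d\mu_{\fe_i}$ by $\eps$ times the weighted gradient terms plus $C\int(\varphi^2+|\nabla\varphi|^2)u^2\,d\mu_{\fe_i}$ exactly as in \cite[Lemma B.3]{Epstein_Mazzeo_2016} (cf.\ the proof of Lemma \ref{lem:First_order_derivatives}). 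The accumulated constant depends only on bounds for the $b_k$'s and their derivatives and on $m,n$, i.e.\ $C=C(b,m,n)$.

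For the density step, note that, since $i>n_0$, the passage from $d\mu$ to $d\mu_{\fe_i}$ does not alter which boundary faces are tangential, so $H^1(\cB_R;d\mu_{\fe_i})$ is the closure of $C^\infty_c(\underline{\cB_R})$; choosing $u_k\in C^\infty_c(\underline{\cB_R})$ with $u_k\to u$ there and applying the inequality just proved to the differences $u_k-u_{k'}$ shows that $\{\varphi u_k\}$ is Cauchy in $L^2(\cB_R;d\mu)$, whence $\varphi u\in L^2(\cB_R;d\mu)$ and \eqref{eq:Bad_first_derivative} follows by passing to the limit. I expect the main obstacle to be the clean bookkeeping of the integration by parts — simultaneously making the boundary term at $\{x_i=0\}$ vanish for merely $H^1$ data and absorbing the logarithmic error terms coming from the $x_i$-dependence of the weights — rather than the core Hardy estimate, which is standard.
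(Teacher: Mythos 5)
Your argument is essentially the paper's: both proofs integrate by parts in $x_i$, absorb the weighted $u_{x_i}$-term by Cauchy--Schwarz, control the logarithmic error terms produced by the $x_i$-dependence of the weights $b_k$ ($n_0<k\le n$) via the interpolation estimate of \cite[Lemma B.3]{Epstein_Mazzeo_annmathstudies}, and pass from smooth data to $u\in H^1(\cB_R;d\mu_{\fe_i})$ by density. The only difference is cosmetic: you freeze $b_i$ at its trace $\widehat b_i$ before integrating by parts against $x_i^{\widehat b_i-1}\,dx_i$, while the paper integrates by parts directly against the variable-exponent weight using the identity $1=\tfrac{1}{b_i}\partial_{x_i}(x_i^{b_i})x_i^{1-b_i}-\tfrac{1}{b_i}(\partial_{x_i}b_i)\,x_i\ln x_i$; neither route avoids the logarithmic error terms, which in your version come from differentiating $x_i^{b_i-\widehat b_i}$ and the factors $x_j^{b_j(z)}$. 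One imprecision worth flagging: the sentence ``freezing all variables but $x_i$ and using Fubini, it suffices to establish \dots the one-dimensional bound'' is not literally correct, because the $x_j$-densities of $d\mu_{\fe_i}$ for $j\ne i$, $n_0<j\le n$ (and the frozen factor $x_i^{b_i-\widehat b_i}$) still depend on $x_i$, so the measure does not factor and the estimate cannot be obtained slice-by-slice with a constant depending only on $\widehat b_i$; you implicitly correct this in the final paragraph by doing the $x_i$-integration by parts against the full measure, which is exactly where the $\ln x_k$ terms arise, so this is a presentational issue rather than a gap.
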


\begin{proof}
Because the space of functions 
$C^{\infty}_c(\bar\cB_R\backslash \partial^T \cB_R)$ is dense in $H^1(\cB_R;d\mu)$, it follows that it is sufficient to prove inequality \eqref{eq:Bad_first_derivative} under the assumption that $u$ belongs to 
$C^{\infty}_c(\bar\cB_R\backslash \partial^T\cB_R)$, which we assume for the rest of the proof. Integration by parts and the fact that the measure $d\mu$ defined in \eqref{eq:Weight} has the property that the exponent $b_i$ does not depend on the $x_i$-variable give us that
\begin{equation}
\label{eq:Bad_term}
\begin{aligned}
\int_{\cB_R}  |u|^2 \varphi^2\,d\mu
&= \int_{\cB_R}  |u|^2 \varphi^2 \frac{1}{b_i(z)}\partial_{x_i}(x_i^{b_i(z)}) x_i^{-b_i(z)+1} \,d\mu
-\int_{\cB_R}  |u|^2 \varphi^2 \frac{1}{b_i(z)}(\partial_{x_i} b_i \ln x_i) x_i\,d\mu \\
& = - \int_{\cB_R}  \frac{2}{b_i(z)}\left(\varphi D^{\fe_i}_x \varphi|u|^2 + \varphi^2 u D^{\fe_i}_xu\right)x_i\,d\mu\\
&\quad 
-\int_{\cB_R}  \frac{1}{b_i(z)} |u|^2 \varphi^2 \sum_{k=1}^n (\partial_{x_i} b_k\ln x_k) x_i\,d\mu.
\end{aligned}
\end{equation}
Because  $b_i\geq \beta_0>0$ on $\{x_i=0\}\cap\partial\cB_1$ by condition \eqref{eq:Cleanness}, we can assume without loss of generality that $b_i \geq \beta_0/2$ on $\bar \cB_1$. From definition \eqref{eq:Weight_for_higher_order_Sobolev_spaces} of the weight 
$d\mu_{\fe_i}$, we have that
\begin{align}
\label{eq:Bad_term_1}
\int_{\cB_R}  \int_{\cB_R}  \frac{2}{b_i(z)}\varphi |D^{\fe_i}_x \varphi||u|^2 x_i\,d\mu
\leq 
\frac{4}{\beta_0}\int_{\cB_R}\left(\varphi^2 + |\nabla\varphi|^2\right)|u|^2\,d\mu_{\fe_i},
\end{align}
and, for all $\eps>0$, there is a positive constant, $C=C(b,\eps,m,n)$, such that
\begin{align}
\label{eq:Bad_term_2}
\int_{\cB_R} \frac{2}{b_i(z)} \varphi^2 |u| |D^{\fe_i}_x u| x_i\,d\mu
\leq 
\eps \int_{\cB_R}|u|^2 \varphi^2\,d\mu
\quad + C\int_{\cB_R}x_i|D^{\fe_i}_x u|^2\varphi^2\, d\mu_{\fe_i}.
\end{align}
Applying \cite[Lemma B.3]{Epstein_Mazzeo_annmathstudies}, 
\footnote{We recall that the proof of \cite[Lemma B.3]{Epstein_Mazzeo_2016} requires that the weights $b_k\restrictedto_{\{x_k=0\}}$ are positive, for all $1\leq k\leq n$, but it is easy to see that the same proof also holds in the case when there are logarithmic singularities $\ln x_k$ only corresponding to weights $b_k$ such that $b_k\restrictedto_{\{x_k=0\}}$ is positive, i.e. $n_0+1\leq k\leq n$ by \eqref{eq:n_0}. Note that this is the case in the present setting since $\partial_{x_i} b_k = 0$, for all $1\leq k\leq n_0$.}
we can bound the last term on the right-hand side in the preceding inequality by
\begin{equation}
\label{eq:Bad_term_3}
\begin{aligned}
\int_{\cB_R}  \frac{1}{b_i(z)} |u|^2 \varphi^2 \sum_{k=1}^n |\partial_{x_i} b_k||\ln x_k|x_i\,d\mu
&\leq 
\eps\int_{\cB_R}\left(\sum_{j=1}^nx_j|D^{\fe_j}_xu|^2 +\sum_{l=1}^m|D^{\ff_l}_y u|^2\right)\varphi^2\,d\mu_{\fe_i}\\
&\quad + C\int_{\cB_R}\left(\varphi^2+|\nabla\varphi|^2\right)|u|^2\, d\mu_{\fe_i}.
\end{aligned}
\end{equation}
for all $\eps>0$, where $C=C(b,\eps,m,n)$ is a positive constant. Inequality \eqref{eq:Bad_first_derivative} follows now by choosing $\eps$ small enough and using identity \eqref{eq:Bad_term} together with inequalities \eqref{eq:Bad_term_1}, \eqref{eq:Bad_term_2}, and \eqref{eq:Bad_term_3}. This completes the proof.
\end{proof}

To state the next auxiliary result, we make use of the commutator identity:
\begin{equation}
\label{eq:Commutator}
\begin{aligned}
{[}L,\varphi {]} u 
&
:= uL\varphi + \sum_{i=1}^n 2x_i\bar a_{ii}u_{x_i}\varphi_{x_i} + \sum_{i,j=1}^n x_ix_ja_{ij}\left(u_{x_i}\varphi_{x_j}+u_{x_j}\varphi_{x_i}\right)\\
&
\quad
+\sum_{i=1}^n\sum_{l=1}^m x_ic_{il}\left(u_{x_i} \varphi_{y_l} + u_{y_l}\varphi_{x_i}\right)
+\sum_{l,k=1}^m d_{lk}\left(u_{y_l} \varphi_{y_k} + u_{y_k}\varphi_{y_l}\right),
\end{aligned}
\end{equation}
for all $\varphi, u \in C^1(\cB_1)$, where the operator $L$ is defined in \eqref{eq:Operator}. We have

\begin{lem}
\label{eq:Cutoff_local_solution}
Assume that the operator $L$ satisfies Assumption \ref{assump:Coeff}. Let $0<r<R<1$, $T>0$, $f\in L^2(\cB_R; d\mu)$, and let $u$ be a local weak solution to the initial-value problem \eqref{eq:Initial_value_problem_local}. Let $\varphi:\bar S_{n,m}\rightarrow\RR$ be a smooth function with compact support in $\bar\cB_r$. Then the function $\bar u = \varphi u$ is a weak solution to 
\begin{equation}
\label{eq:Problem_cutoff}
\begin{aligned}
\left\{\begin{array}{rl}
\bar u_t-L\bar u=\bar g & \hbox{ on } (0, T)\times \cB_R,\\ 
\bar u(0) = \bar f& \hbox{ on } \cB_R, 
\end{array} \right.
\end{aligned}
\end{equation}
where $\bar f:= \varphi f$, $\bar g= -[L,\varphi] u$, and $\bar g$ belongs to $L^2((0,T);L^2(\cB_R;d\mu))$.
\end{lem}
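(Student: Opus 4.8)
The statement to prove is Lemma \ref{eq:Cutoff_local_solution}: if $u$ is a local weak solution to the initial-value problem on $(0,T)\times\cB_R$, and $\varphi$ is a smooth cutoff supported in $\bar\cB_r$, then $\bar u=\varphi u$ is a weak solution to the problem with source $\bar g=-[L,\varphi]u$ and initial data $\bar f=\varphi f$, and $\bar g\in L^2((0,T);L^2(\cB_R;d\mu))$.

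The plan is to verify directly that $\bar u$ satisfies each of the conditions in Definition \ref{defn:Weak_sol_local}(a) for the problem \eqref{eq:Problem_cutoff}. First I would establish the commutator identity \eqref{eq:Commutator}: a straightforward expansion of $L(\varphi u)$ using the product rule, separating the pure-$u$ term $\varphi Lu$, the pure-$\varphi$ term $uL\varphi$, and the mixed first-derivative terms, gives $L(\varphi u)=\varphi Lu + [L,\varphi]u$ with $[L,\varphi]u$ exactly as displayed. Since the coefficients of $L$ are smooth and bounded on $\bar\cB_2$ by Assumption \ref{assump:Coeff}, and $\varphi$ is smooth with all derivatives bounded, $[L,\varphi]u$ is a linear combination with bounded coefficients of $u$ and the first derivatives $x_i u_{x_i}$ (note the factors $x_i$, $x_ix_j$ and $x_i$ attached to the derivative terms) and $u_{y_l}$; crucially these all appear with weights that make them controlled by the $H^1(\cB_R;d\mu)$ norm. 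Because $u\in L^2((0,T);H^1(\cB_R;d\mu))$ locally (more precisely $u\varphi'$ lies in that space for cutoffs $\varphi'$, by Definition \ref{defn:Weak_sol_local}), choosing an auxiliary cutoff $\varphi'$ that is $1$ on a neighborhood of $\supp\varphi$, we can write $[L,\varphi]u=[L,\varphi](\varphi' u)$ and bound its $L^2((0,T);L^2(\cB_R;d\mu))$-norm by $C\|\varphi' u\|_{L^2((0,T);H^1(\cB_R;d\mu))}<\infty$. This gives $\bar g\in L^2((0,T);L^2(\cB_R;d\mu))$.

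Next I would check the regularity conditions. For any $\psi\in C^1_c([0,T]\times\underline{\cB_R})$, the product $\bar u\psi=u(\varphi\psi)$; since $\varphi\psi\in C^1_c([0,T]\times\underline{\cB_R})$ (the support in $x$ only shrinks, and tangency/transversality structure is preserved), Definition \ref{defn:Weak_sol_local}(a)(1) applied to $u$ gives $u(\varphi\psi)\in L^2((0,T);H^1(\cB_R;d\mu))$ and $\frac{d}{dt}(u\varphi\psi)\in L^2((0,T);H^{-1}(\cB_R;d\mu))$, which is exactly condition (1) for $\bar u$. Condition (3) (the initial condition) follows similarly: for $\psi\in C^1_c(\underline{\cB_R})$, $\|\bar u(t)\psi-\bar f\psi\|_{L^2} = \|u(t)(\varphi\psi)-f(\varphi\psi)\|_{L^2}\to 0$ by condition (3) for $u$ with test function $\varphi\psi$.

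The main substantive step is the variational identity, condition (2): for every admissible test function $v$ (in $L^2((0,T);H^1(\cB_R;d\mu))$ with $dv/dt\in L^2((0,T);H^{-1})$ and support in $[0,T]\times\underline{\cB_R}$), one must show
\begin{equation*}
\int_0^T\left\langle \frac{d(\varphi u)}{dt},v\right\rangle dt + \int_0^T Q(\varphi u, v)\, dt = \int_0^T(\bar g,v)_{L^2(\cB_R;d\mu)}\, dt.
\end{equation*}
Here the idea is to insert $\varphi$ into the test function: since $\varphi v$ is again an admissible test function for $u$ (support shrinks, $\varphi v\in L^2((0,T);H^1)$ and its time derivative is in $L^2((0,T);H^{-1})$ because $\varphi$ is time-independent), the weak formulation for $u$ reads $\int_0^T\langle u_t,\varphi v\rangle + \int_0^T Q(u,\varphi v)=0$. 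The hard part will be the careful bookkeeping of the ``integration by parts in space'' that relates $Q(\varphi u,v)$ to $Q(u,\varphi v)$ plus commutator terms. Writing $Q(u,v)=-(Lu,v)_{L^2(\cB_R;d\mu)}$ formally (which is how $Q$ is defined, extended by density/continuity via \eqref{eq:Bilinear_form}), one has $Q(\varphi u,v)-Q(u,\varphi v)$ equal to a bilinear expression in the first derivatives of $\varphi$, $u$, $v$; a direct computation using the explicit forms \eqref{eq:Q_sym} and \eqref{eq:V} of $Q_{\hbox{\tiny{sym}}}$ and $V$ shows this difference equals exactly $-\int_{\cB_R}([L,\varphi]u)v\, d\mu = (\bar g,v)_{L^2(\cB_R;d\mu)}$, where one uses that $[L,\varphi]u$ is a \emph{first-order} operator applied across $u$ and $\varphi$ symmetrically (all the genuinely second-order pieces cancel because $\varphi$ multiplies $u$ as a function, not differentiates it twice). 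Likewise the time-derivative terms match: $\langle (\varphi u)_t, v\rangle = \langle u_t, \varphi v\rangle$ since $\varphi$ is independent of $t$. Combining, $\int_0^T\langle(\varphi u)_t,v\rangle + \int_0^T Q(\varphi u,v) = \int_0^T\langle u_t,\varphi v\rangle + \int_0^T Q(u,\varphi v) + \int_0^T(\bar g,v)_{L^2(\cB_R;d\mu)} = \int_0^T(\bar g,v)_{L^2(\cB_R;d\mu)}$, which is the desired identity. The only delicate point worth spelling out is that the integration by parts / commutator computation must be justified at the level of the weak bilinear form rather than pointwise, which one handles by first verifying the identity for $u\in C^\infty_c$ and then passing to the limit using density of smooth functions and the continuity estimate \eqref{eq:Continuity_Dirichlet_form}; the singular logarithmic coefficients in $V$ are harmless here because they are multiplied by derivatives $\partial_{x_j}b_k$ which, as in the cited Lemma B.3 of \cite{Epstein_Mazzeo_2016}, only occur for $j,k$ with positive weights and are $L^2(d\mu)$-bounded against the $H^1$-norm.
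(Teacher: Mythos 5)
Your proposal is correct and takes essentially the same approach as the paper: establish the commutator identity, verify $\bar g\in L^2((0,T);L^2(\cB_R;d\mu))$, and reduce the variational identity for $\bar u$ to that for $u$ by moving $\varphi$ onto the test function, with the key step $Q(\varphi u,v)=Q(u,\varphi v)+(\bar g,v)_{L^2(\cB_R;d\mu)}$ justified first for smooth $u$ and then by density using \eqref{eq:Continuity_Dirichlet_form}. You are a bit more explicit than the paper in verifying Definition~\ref{defn:Weak_sol_local}(a)(1) and (3) via the substitution $\bar u\psi=u(\varphi\psi)$, and more careful in invoking an auxiliary cutoff $\varphi'$ to bound $\bar g$ (the paper's assertion that $u\in L^2((0,T);H^1(\cB_R;d\mu))$ is, strictly speaking, a statement about $u\varphi'$ for local cutoffs $\varphi'$), but these refinements do not change the substance of the argument.
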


\begin{proof}
Because $u$ is a weak solution to the equation \eqref{eq:Initial_value_problem_local}, it follows that $u$ belongs to the space of functions $L^2((0,T); H^1(\cB_R;d\mu))$, and so from \eqref{eq:Commutator} and the definition of the Sobolev space $H^1(\cB_R;d\mu)$, we have that $\bar g$ belongs to 
$L^2((0,T);L^2(\cB_R;d\mu))$. The initial condition $\bar u(0) = \bar f$ is clearly satisfied by \eqref{eq:Weak_sol_initial_cond} and the definition of local weak solutions in \S \ref{sec:Weak_sol}. Thus, it only remains to prove that, for all test functions $v\in \cF((0,T)\times \bar \cB_R)$, we have that
\begin{equation}
\label{eq:Weak_sol_var_eq_cutoff}
\int_0^T\left(\frac{d\bar u(t)}{dt}, v(t)\right)_{L^2(\cB_R;d\mu)}\, dt+\int_0^T Q(\bar u(t),v(t))\, dt= \int_0^T \left(\bar g(t), v(t)\right)_{L^2(\cB_R;d\mu)}\, dt. 
\end{equation}
We clearly have that
$$
\int_0^T\left(\frac{d\bar u(t)}{dt}, v(t)\right)_{L^2(\cB_R;d\mu)}\, dt
=
\int_0^T\left(\frac{d u(t)}{dt}, \varphi v(t)\right)_{L^2(\cB_R;d\mu)}\, dt,
$$
and we will prove that
\begin{equation}
\label{eq:Q_cutoff}
\int_0^T Q(\bar u(t),v(t))\, dt
=
\int_0^T Q(u(t),\varphi v(t))\, dt + \int_0^T \left(\bar g(t), v(t)\right)_{L^2(\cB_R;d\mu)}\, dt.
\end{equation}
The preceding two identities together with the fact that $u$ is a weak solution of equation \eqref{eq:Initial_value_problem_local} and that $\bar g:=[L,\varphi]u$ imply \eqref{eq:Weak_sol_var_eq_cutoff}. Thus, we only need to establish that identity \eqref{eq:Q_cutoff} holds.

Because the space of functions 
$C^{\infty}_c([0,T]\times(\bar \cB_R\backslash\partial^T\cB_R))$ is dense in $L^2((0,T);H^1(\cB_R;d\mu))$, we can assume without loss of generality that $u$ belongs to 
$C^{\infty}_c([0,T]\times(\bar \cB_R\backslash\partial^T \cB_R))$. Usual integration by parts gives us that
\footnote{We suppress the time variable for clarity.}
\begin{align*}
Q(\bar u,v)) &= -(L(\varphi u), v)_{L^2(\cB_R;d\mu)}\\
& = -(\varphi Lu, v)_{L^2(\cB_R;d\mu)} -([L, \varphi] u, v)_{L^2(\cB_R;d\mu)} 
\quad\hbox{ (because $L(\varphi u) = \varphi Lu + [L,\varphi] u$)}\\
& = -(Lu, \varphi v)_{L^2(\cB_R;d\mu)} +(\bar g, v)_{L^2(\cB_R;d\mu)}
\quad\hbox{ (because $\bar g:=-[L,\varphi]u$)}\\
& = Q(u, \varphi v) +(\bar g, v)_{L^2(\cB_R;d\mu)},
\end{align*}
from where identity \eqref{eq:Q_cutoff} follows. This completes the proof.
\end{proof}

We next prove:

\begin{lem}
\label{lem:Sup_est_smooth}
Let $d\mu$ be the measure defined in \eqref{eq:Weight} such that the coefficients $\{b_i:1\leq i\leq n\}$ satisfy condition 
\eqref{eq:Cleanness} holds. Let $u:\cB_R\rightarrow\RR$ be a smooth function. Then for all $0<r<r_0<R$ and $\fb\in\NN^m$, there are positive constants, $C=C(b,\fb,m,n,r,r_0,R)$, $p=p(b,m,n)>2$, and $q=q(b,m,n)<2$, such that for all $z\in\cB_r$ we have
\begin{equation}
\label{eq:Sup_est_deriv_slice}
|D^{\fe}_xD^{\fb}_yu(z)| 
\leq C \cN \cW_{r_0}(z),
\end{equation}
where the norm $\cN$ and the set of multi-indices $\cD$ 
\begin{align}
\label{eq:Definition_cN}
\cN&:=\sum_{(\fa,\fc)\in\cD} \|D^{\fa+\fe}_xD^{\fc+\fb}_y u\|_{H^1(\cB_R;d\mu_{\fa+\fe})},\\
\label{eq:Definition_cD}
\cD&:=\{(\fa,\fc)\in\NN^n\times\NN^m:\, \fa_i, \fc_l\in \{0,1\}, \hbox{ for all } 1\leq i\leq n \hbox{ and } 1\leq l\leq m\},
\end{align}
the factor $\cW_{r_0}(z)$ are defined in \eqref{eq:Definition_cW} and the rectangle $\cR_{r_0}(z)$ is defined in 
\eqref{eq:Definition_cR}.
\end{lem}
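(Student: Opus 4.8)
The plan is to prove the pointwise bound \eqref{eq:Sup_est_deriv_slice} by a weighted Sobolev embedding argument of Morrey type, adapted to the singular measure $d\mu$. First I would observe that the natural object to control is not $D^{\fe}_xD^{\fb}_yu$ directly but the anisotropic gradient of the function $w:=D^{\fe}_xD^{\fb}_yu$ together with $w$ itself, measured in the weighted space $L^2(\cB_R;d\mu_{\fe})$ — note that $d\mu_\fe$ is exactly the measure naturally associated to $w$, since differentiating $\prod_{i=1}^{n_0}x_i^{-1}$ (the $x_i$-factors of $d\mu$) $\fe_i$ times moves the exponent from $-1$ to $\fe_i-1$, i.e. to $x_i^{\fe_i-1}$. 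The key elementary inequality is: for a smooth function $v$ on $\cB_R$ and a point $z\in\cB_r$, write $v(z)$ as the integral of its derivatives over the box $\cR_{r_0}(z)=\prod_i(x_i,r_0)\times\prod_l(y_l,r_0)$ against a smooth cutoff equal to $1$ near the origin — more precisely iterate the fundamental theorem of calculus in each of the $n+m$ coordinate directions, picking up at each step either a derivative of $v$ or a derivative of the cutoff. This expresses $v(z)$ as a sum of integrals of $D^{\fa}_xD^{\fc}_yv$ over $\cR_{r_0}(z)$ with $\fa_i,\fc_l\in\{0,1\}$, which is precisely where the index set $\cD$ in \eqref{eq:Definition_cD} comes from.

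Next I would apply this representation formula to $v=w$ and estimate each resulting integral by Hölder's inequality with the conjugate exponents $p>2$, $q<2$ satisfying $1/p+1/q=1$. Splitting the integrand as
\[
\bigl|D^{\fa+\fe}_xD^{\fc+\fb}_yu\bigr| = \Bigl(\bigl|D^{\fa+\fe}_xD^{\fc+\fb}_yu\bigr| \rho_{\fa+\fe}^{1/p}\Bigr)\cdot \rho_{\fa+\fe}^{-1/p},
\]
where $\rho_{\fa+\fe}$ denotes the density of $d\mu_{\fa+\fe}$ relative to Lebesgue measure, Hölder gives a product of $\|D^{\fa+\fe}_xD^{\fc+\fb}_yu\|_{L^p(\cR_{r_0}(z);d\mu_{\fa+\fe})}$ (bounded by the $H^1(\cB_R;d\mu_{\fa+\fe})$-norm via a one-dimensional Sobolev embedding in each $x_j$-variable — this is where $p>2$ is forced and where the exponents $b_j$ enter) and the $L^q$-norm of $\rho_{\fa+\fe}^{-1/p}$ over $\cR_{r_0}(z)$, which is exactly $\cW_{r_0}(z)^{1/q\cdot q}=\cW_{r_0}(z)$ as defined in \eqref{eq:Definition_cW} — more carefully, $\bigl(\int_{\cR_{r_0}(z)}\rho_{\fa+\fe}^{-q/p}\bigr)^{1/q}$, and since $\fe_i$ adds $1$ to the exponent on $x_i$ for $i\le n_0$ turning $x_i^{-1}$-weight integrals into $x_i^{-q/p}$ exactly as written. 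Summing over $(\fa,\fc)\in\cD$ and absorbing the cutoff-derivative terms (which are supported away from the singular set $\{x_i=0\}$ and hence harmless) yields \eqref{eq:Sup_est_deriv_slice} with $\cN$ as in \eqref{eq:Definition_cN}.

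The technical heart, and the step I expect to be the main obstacle, is the one-dimensional weighted Sobolev embedding needed to pass from the $L^2$-type control coming from the $H^1(\cB_R;d\mu_{\fa})$-norm to the $L^p$-control with $p>2$ that makes the Hölder pairing converge: on an interval $(0,r_0)$ with weight $\xi^{c-1}\,d\xi$ for $c>0$, one has a continuous embedding $H^1((0,r_0);\xi^{c-1}d\xi)\hookrightarrow L^p((0,r_0);\xi^{c-1}d\xi)$ for a suitable $p=p(c)>2$, and one must track how the admissible $p$ (equivalently $q$) depends uniformly on the weights $\{b_i\}$ and on $n,m$ so that the single pair $(p,q)$ works simultaneously for all indices in $\cD$ — in particular one needs $q/p<1$ against the $x_i^{-1}$ weights for $i\le n_0$, i.e. $q<p$, which holds since $q<2<p$, and the transverse factors $\xi_j^{-b_j q/p}$ with $b_j>0$ must be integrable near $0$, forcing $b_j q/p<1$, achievable by taking $p$ close enough to $2$ (hence $q$ close to $2$) depending only on $\min_j b_j>0$ and the dimension. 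Once the uniform choice of $(p,q)$ is secured, the remaining bookkeeping — the iterated fundamental-theorem-of-calculus representation, the Hölder splitting, and the identification of the resulting weight integral with $\cW_{r_0}$ — is routine and I would present it compactly.
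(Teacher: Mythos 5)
Your proposal follows the paper's proof almost step for step: the iterated fundamental-theorem-of-calculus representation (against a one-dimensional cutoff $\varphi$) producing exactly the index set $\cD$ of \eqref{eq:Definition_cD}, the H\"older split against conjugate exponents $p>2$ and $q<2$ writing $|D^{\fa+\fe}_xD^{\fc+\fb}_yu|=\bigl(|D^{\fa+\fe}_xD^{\fc+\fb}_yu|\rho_{\fa+\fe}^{1/p}\bigr)\rho_{\fa+\fe}^{-1/p}$, and the identification of the resulting weight integral $\bigl(\int_{\cR_{r_0}(z)}\rho_{\fa+\fe}^{-q/p}\bigr)^{1/q}$ with $\cW_{r_0}(z)$ after taking $\fa$ maximal in $\{0,1\}^n$ --- all of this is precisely what the paper does. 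The one place where you diverge is in obtaining the weighted $L^p$-control, and this is the part of your sketch that would not go through as written. The paper simply invokes \cite[Theorem 3.2]{Epstein_Mazzeo_2016}, a multidimensional weighted Sobolev embedding asserting $\|v\|_{L^p(\cB_{r_0};d\mu_{\fa+\fe})}\leq C\|v\|_{H^1(\cB_R;d\mu_{\fa+\fe})}$ for some $p>2$ depending on the weights and dimensions; you instead propose to assemble this from one-dimensional embeddings ``in each $x_j$-variable.'' Applying a one-dimensional weighted embedding coordinate by coordinate yields mixed Lebesgue norms (e.g.\ $L^p$ in one coordinate, $L^2$ in the others), not the uniform $L^p(d\mu_{\fa+\fe})$-norm that your H\"older pairing requires; upgrading mixed norms to a genuine $L^p$-norm in all variables needs a Gagliardo--Nirenberg-type multidimensional interpolation adapted to the degenerate weights $x_i^{b_i+\fa_i+\fe_i-1}$, which is exactly the nontrivial content of the cited theorem and not a routine iteration. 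A secondary slip: within this lemma the integrals in $\cW_{r_0}(z)$ are taken over $\prod_i(x_i,r_0)\times\prod_l(y_l,r_0)$ with $z\in\cB_r\subset S_{n,m}$, so all the lower limits are strictly positive and no integrability condition like $b_jq/p<1$ is actually imposed here --- the admissible $p$ is dictated entirely by the Sobolev embedding, not by tuning against integrability of the weight. Once you replace the one-dimensional-embedding sketch with the cited multidimensional embedding (or give a complete proof of it), your argument is correct and is essentially the paper's proof.
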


\begin{proof}
Let $\varphi:\RR\rightarrow [0,1]$ be a smooth cut-off function such that $\varphi(s)=1$ for all $|s|\leq r$, and $\varphi(s)=0$ for all $|s|\geq r_0$. Let $v:=D^{\fe+(0,\fb)}u$. For all $z\in \cB_r$, we have by integration by parts
\begin{equation*}
\begin{aligned}
v(s,z) &= -\int_{x_1}^{\infty} d\xi_1D^{\fe_1}_{\xi}(\varphi(\xi_1)v(\xi_1,x_2,\ldots,x_n,y))\\
&= \int_{x_2}^{\infty}d\xi_2\int_{x_1}^{\infty}d\xi_1  D^{\fe_2}_{\xi} (\varphi(\xi_2)D^{\fe_1}_{\xi}(\varphi(\xi_1)v(\xi_1,\xi_2,x_3,\ldots,x_n,y)))\\
&= (-1)^{n+m} \int_{y_m}^{\infty}d\rho_m\ldots \int_{y_1}^{\infty}d\rho_1 \int_{x_n}^{\infty}d\xi_n\ldots\int_{x_1}^{\infty}d\xi_1\\
&\quad\quad
D^{\ff_m}_{\rho} (\varphi(\rho)\ldots D^{\ff_1}_{\rho} \varphi(\rho_1) D^{\fe_n}_{\xi}(\varphi(\xi_n)\ldots D^{\fe_1}_{\xi}(\varphi(\xi_1)v(\xi,\rho))))).
\end{aligned}
\end{equation*}
More compactly, using definitions \eqref{eq:Definition_cD} and \eqref{eq:Definition_cR}, we can write the preceding identity as
\begin{align*}
v(z) &= (-1)^{n+m} \sum_{(\fa,\fc)\in\cD}\int_{\cR_{r_0}(z)} D^{\fa}_{\xi}D^{\fc}_{\rho}v(\xi,\rho)
D^{\fe-\fa}_{\xi}D^{\fc}_{\rho}(\varphi(\rho_1)\ldots\varphi(\rho_m)\varphi(\xi_1)\ldots\varphi(\xi_n))\, d\xi d\rho,
\end{align*}
and so there is a positive constant, $C=C(r,r_0)$, such that
\begin{equation}
\label{eq:Sup_est_int}
|v(z)| \leq C\sum_{(\fa,\fc)\in\cD} \int_{\cR_{r_0}(z)} |D^{\fa}_{\xi}D^{\fc}_{\rho}v(\xi,\rho)|\, d\xi d\rho.
\end{equation}
For all $(\fa,\fc)\in \cD$, notice that the weight function $d\mu_{\fa+\fe}$ defined using \eqref{eq:Weight_for_higher_order_Sobolev_spaces} satisfies the hypotheses of the Sobolev inequality \cite[Theorem 3.2]{Epstein_Mazzeo_2016}. This yields that there are positive constants, $C$ and $p_{\fa}=p(\fa,b,m,n)>2$, such that 
\begin{equation*}
\left(\int_{\cB_{r_0}} |D^{\fa}_xD^{\fc}_yv|^{p_{\fa}}\, d\mu_{\fa+\fe}\right)^{1/p_{\fa}} \leq C \cN,
\end{equation*}
where the norm $\cN$ is defined by \eqref{eq:Definition_cN}. Let $p:=\min\{p_{\fa}:\fa\in\NN^n, |\fa|=1\}$. Then the preceding inequality implies that
\begin{equation*}
\left(\int_{\cB_{r_0}} |D^{\fa}_{x}D^{\fc}_{y}v|^p\, d\mu_{\fa+\fe}\right)^{1/p} \leq C \cN,
\end{equation*}
which together with \eqref{eq:Sup_est_int} and H\"older's inequality give us
\begin{align*}
|v(z)| &\leq C\cN 
\left(\int_{\cR_{r_0}(z)}
\prod_{i=1}^{n_0} \xi_i^{-\fa_i \frac{q}{p}} \, d\xi_i
\prod_{j=n_0+1}^{n} \xi_j^{-(b_j(\xi,\rho)+\fa_j-1) \frac{q}{p}} \, d\xi_j
\prod_{l=1}^md\rho_l
\right)^{\frac{1}{q}},
\end{align*}
where we denote by $q$ the conjugate exponent of $p$.
The right-hand side above attains its largest value when $\fa_k=1$, for all $1\leq k\leq n$, which implies \eqref{eq:Sup_est_deriv_slice}. This completes the proof.
\end{proof}

%
%

\def\cprime{$'$} \def\cprime{$'$}
  \def\polhk#1{\setbox0=\hbox{#1}{\ooalign{\hidewidth
  \lower1.5ex\hbox{`}\hidewidth\crcr\unhbox0}}} \def\cprime{$'$}
  \def\cprime{$'$} \def\cprime{$'$} \def\cprime{$'$} \def\cprime{$'$}
  \def\lfhook#1{\setbox0=\hbox{#1}{\ooalign{\hidewidth
  \lower1.5ex\hbox{'}\hidewidth\crcr\unhbox0}}} \def\cprime{$'$}
  \def\cprime{$'$} \def\cprime{$'$} \def\cprime{$'$} \def\cprime{$'$}
  \def\cprime{$'$} \def\cprime{$'$} \def\cprime{$'$} \def\cprime{$'$}
  \def\cprime{$'$}
\providecommand{\bysame}{\leavevmode\hbox to3em{\hrulefill}\thinspace}
\providecommand{\MR}{\relax\ifhmode\unskip\space\fi MR }
\providecommand{\MRhref}[2]{%
  \href{http://www.ams.org/mathscinet-getitem?mr=#1}{#2}
}
\providecommand{\href}[2]{#2}

\end{document}